\theoremstyle{thmstyleone}%
\newtheorem{theorem}{Theorem}
\newtheorem{prop}[theorem]{Proposition}%
\newtheorem{lemma}{Lemma}[section]
\newtheorem{cor}[lemma]{Corollary}
\newtheorem{obs}[lemma]{Observation}
\theoremstyle{thmstyletwo}%
\theoremstyle{thmstylethree}%
\DeclareMathOperator{\homeo}{Homeo}
\DeclareMathOperator{\bhomeo}{\textbf{Homeo}}
\DeclareMathOperator{\diff}{Diff}
\DeclareMathOperator{\thomeo}{Homeo^{1}}
\DeclareMathOperator{\bthomeo}{\textbf{Homeo}^{\textbf{1}}}
\DeclareMathOperator{\supp}{supp}
\DeclareMathOperator{\id}{id}
\begin{document}

\title[Article Title]{Homeomorphisms of surfaces that preserve continuously differentiable curves}


\author[1,2]{\fnm{Katherine Williams} \sur{Booth}}\email{k.wbooth3@gmail.com}

\affil[1]{\orgdiv{School of Mathematics}, \orgname{Georgia Institute of Technology}, \orgaddress{ \city{Atlanta}, \state{GA}, \country{USA}}}

\affil[2]{\orgdiv{Department of Mathematics}, \orgname{Vanderbilt University}, \orgaddress{\city{Nashville}, \state{TN}, \country{USA}}}

\abstract{In this paper, we study $\homeo^1(S)$, the group of homeomorphisms of a surface that preserve the set of one-dimensional $C^1$ submanifolds of that surface. The group $\homeo^1(S)$ belongs to a family of similarly defined groups $\homeo^k(S)$ that were recently introduced by the author. In a separate paper, we have shown that for most closed surfaces, $\homeo^k(S)$ is naturally isomorphic to the automorphisms of a smooth fine curve graph. By contrast, the work in this paper gives local conditions that characterize $\homeo^1(S)$. We show that there exists a collection of conditions that are both necessary and sufficient for a homeomorphism of the surface to be an element of this group. These conditions primarily depend upon the structure of the induced map on the projective tangent bundle. Additionally, we provide examples of several types of elements of $\homeo^1(S)$ that are not diffeomorphisms. These include inducing discontinuous maps on the projective tangent bundle and having infinitely many non-differentiable points. }



\pacs[MSC Classification]{57K20, 20F65, 57S05}

\maketitle

\section{Introduction}\label{intro}

Given a smooth surface $S$,  we say that a simple closed curve $\alpha$ in $S$ is $C^1$ if it is a properly embedded one-dimensional $C^1$ submanifold of $S$. Equivalently,  these can also be defined as the image of a continuously differentiable and injective $\gamma: S^1 \rightarrow S$ with $\gamma'(t) \neq 0$ for all $t \in S^1$.  We define the group $\thomeo(S)$ to be the set of $f\in\homeo(S)$ such that both  $f$ and $f^{-1}$ map every $C^1$ curve to a $C^1$ curve.   In a separate paper \cite{BoothAutck}, we show that when $S$ is a compact orientable surface with genus at least 2,  $\thomeo(S)$ is naturally isomorphic to the automorphisms of the $C^1$-curve graph. 

The group of $C^1$ diffeomorphisms of $S$,  denoted $\diff^1(S)$,  is a subgroup of $\thomeo(S)$.  We show in Section~\ref{examples} that this containment is proper by adapting an example of Le Roux--Wolff~\cite{LRW}.  
Since there are homeomorphisms that do not map every $C^1$ curve to a $C^1$ curve, we have the following inclusions:
$$\diff^1(S)\; \subsetneq \;  \thomeo(S) \;  \subsetneq \; \homeo(S)$$ 

In this paper, 
we give several new examples of types of elements from $\thomeo(S) {\setminus} \diff^1(S)$. These examples exhibit increasingly interesting and unexpected behavior.  Our Main Theorem provides sufficient and necessary local conditions for elements of $\homeo(S)$ to be in $\thomeo(S)$.

The remainder of this introduction is organized as follows.  In Section~\ref{intromainthm}, we define relevant structures and state our Main Theorem.  We also give brief descriptions of our examples,  which are defined in more detail in Section~\ref{examples}.   In Section~\ref{introtangentbundle},  we go into a further discussion on the projective tangent bundle of the surface and its connections to $C^1$ curves.  Section~\ref{introquestions} contains a collection of open questions related to $\thomeo(S)$.  Finally, in Section~\ref{introproofidea},  we give an overview of the proof of our theorem and an outline of the paper.  

\subsection{Statement of our Main Theorem}\label{intromainthm}

Before stating our result, we give a couple of necessary definitions.

\medskip

\noindent \textit{Maps on projective tangent spaces.}
In Section~\ref{sectionforward}, we show that any homeomorphism that maps every $C^1$ curve to a $C^1$ curve preserves the equivalence classes on curves through a point associated to their tangent line at that point.  Such a tangent-preserving homeomorphism $f$ induces well-defined maps $$\bar{d}f_p: \mathbb{P}T_pS \rightarrow \mathbb{P}T_{f(p)}S$$ of the projective tangent spaces for every $p \in S$ and also on the projective tangent bundle $$\bar{d}f:~\mathbb{P}TS \rightarrow \mathbb{P}TS.$$

\noindent We denote points in $\mathbb{P}TS$ by $(p, \ell)$, where $p \in S$ and $\ell \in \mathbb{P}T_pS$. 

\medskip

\noindent \textit{Converging along lines and transverse sequences. } Let $\ell$ be a line with slope $m$ in $\mathbb{R}^2$ through the point $(x, y)$.  A sequence of points $\{(x_n, y_n)\}$ in $\mathbb{R}^2$ is said to \emph{converge along  $\ell$ to $(x, y)$} if 
$$\lim_{n \rightarrow \infty} \frac{y_n - y}{x_n - x} = m$$

To expand this definition to any surface, we identify each tangent space $T_pS$ with $\mathbb{R}^2$.   For any line $\ell \in T_pS$ through $p$,  we say that the sequence $\{p_n\}$ in $S$ \emph{converges along $\ell$ to $p$} if there exists a smooth coordinate chart $\varphi: U \subset S \rightarrow T_pS$ based at $p$ such that $\varphi(p_n)$ converges along $\ell$ to $\varphi(p)$.  Such a chart can always be obtained by fixing a Riemannian metric and taking the inverse of an exponential map based at $p$. 

Any line $\ell \in T_pS$ through the origin is also naturally identified with a point in $\mathbb{P}T_pS$.  We abuse notation and denote this point also by $\ell \in \mathbb{P}T_pS$.  A \emph{transverse sequence for $(p, \ell)$} is a sequence $\{(p_n, k_n)\}$ in $\mathbb{P}TS$ such that $\{p_n\}$  converges along $\ell$ to $p$ and no subsequence of $(p_n, k_n)$ converges to $(p, \ell)$ in $\mathbb{P}TS$.  If such a $(p, \ell)$ exists for a sequence $\{(p_n, k_n)\}$, then we call $\{(p_n, k_n)\}$ a \emph{transverse sequence}.

\medskip

\noindent We now give the necessary and sufficient properties that a homeomorphism needs to have in order to be an element of $\thomeo(S)$.

\begin{theorem}\label{theoremconditions}
Fix a smooth surface $S$.  A homeomorphism $f$ of $S$ is an element of $\thomeo(S)$ if and only if $f$ has the following three properties:
\begin{enumerate}[noitemsep,topsep=0pt, label=$(\alph*)$]
\item $f$ maps every $C^1$ curve to a $C^1$ curve,
\item $\bar{d}f_p$ is a homeomorphism for all $p \in S$, and 
\item $f$ maps every transverse sequence to a transverse sequence.
\end{enumerate}
\end{theorem}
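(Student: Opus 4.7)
The strategy is to prove the two implications separately. The forward direction ``$f\in\thomeo(S)\Rightarrow$ (a)--(c)'' I would handle by appealing to the machinery promised for Section~\ref{sectionforward}: property (a) is immediate, property (b) follows because applying the same construction to $f^{-1}\in\thomeo(S)$ produces a set-theoretic inverse of $\bar{d}f_p$ on each projective tangent space (with continuity on $\mathbb{P}T_pS\cong S^1$ extracted from order-preservation of $\bar{d}f_p$ applied to tangent directions of $C^1$ test curves), and property (c) reduces to showing that $f$ preserves the ``converging along'' relation, for which a $C^1$ test curve tangent to $\ell$ at $p$ can be used as a reference to transport the convergence across $f$, while fiberwise injectivity of $\bar{d}f_p$ keeps $\bar{d}f(k_n)$ away from $\bar{d}f(\ell)$.

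For the substantive backward direction, assume (a)--(c); I need to show $f^{-1}$ sends every $C^1$ curve to a $C^1$ curve. Let $\alpha$ be $C^1$, set $\beta := f^{-1}(\alpha)$, and for each $q\in\beta$ define the candidate tangent
\[
  \ell_q := \bar{d}f_q^{-1}(T_{f(q)}\alpha)\in \mathbb{P}T_qS
\]
via (b), where $T_{f(q)}\alpha$ denotes the genuine tangent direction of $\alpha$ at $f(q)$. The proof then reduces to two claims: (i) $\beta$ has tangent direction $\ell_q$ at every $q$ in the sense that any sequence $q_n\to q$ on $\beta$ converges along $\ell_q$ to $q$; and (ii) the assignment $q\mapsto \ell_q$ is continuous on $\beta$. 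Once (i) and (ii) are in hand, standard local graph coordinates near each $q$ (in a chart where $\ell_q$ is non-vertical) exhibit $\beta$ as the graph of a function with continuous derivative, which is precisely the definition of a $C^1$ submanifold.

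Both (i) and (ii) will be proved by the same strategy: extract a subsequence whose image under $f$ violates (c). For (i), if some $q_n\to q$ on $\beta$ fails to converge along $\ell_q$, then after passing to a subsequence the secant directions converge (by compactness of $\mathbb{P}T_qS$) to some $\ell''\neq\ell_q$, so the sequence $(q_n,\ell_q)$---with the second coordinate interpreted via a fixed local trivialization of $\mathbb{P}TS$ near $q$---is transverse for $(q,\ell'')$. Property (c) then says the image $(f(q_n), T_{f(q)}\alpha)$ is transverse, but $f(q_n)\in\alpha$ forces $f(q_n)$ to converge along $T_{f(q)}\alpha$ to $f(q)$ and the second coordinate is constantly $T_{f(q)}\alpha$, so the image converges in $\mathbb{P}TS$ to $(f(q),T_{f(q)}\alpha)$---leaving no valid witness of transversality. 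For (ii), given $q_n\to q$ in $\beta$ with $\ell_{q_n}\to \ell''\neq\ell_q$ after a subsequence, claim (i) gives that $q_n$ converges along $\ell_q$, making $(q_n,\ell_{q_n})$ transverse for $(q,\ell_q)$; its image $(f(q_n), T_{f(q_n)}\alpha)$ converges to $(f(q), T_{f(q)}\alpha)$ since $\alpha$ is $C^1$, again contradicting (c) by the same witness-failure.

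The main obstacles I expect are technical rather than conceptual: first, cleanly relating topological convergence in $\mathbb{P}TS$ to the pointwise ``converging along'' relation across different fibers (in particular, justifying the interpretation of sequences like $(q_n,\ell_q)$ whose second coordinate must be transported across varying fibers via a local trivialization); and second, for the forward direction, upgrading the fiberwise bijection $\bar{d}f_p$ to a genuine homeomorphism of $\mathbb{P}T_pS$ without a circular use of (c). Once that infrastructure is in place, the backward implication is a clean double invocation of (c).
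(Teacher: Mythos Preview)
Your overall two-direction decomposition and your argument for claim (ii) are correct and essentially match the paper's Lemma~\ref{inversetangentcontinuous}. Two steps, however, have genuine gaps.

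\textbf{Forward direction, property (c).} The assertion that ``fiberwise injectivity of $\bar{d}f_p$ keeps $\bar{d}f(k_n)$ away from $\bar{d}f(\ell)$'' fails: the $k_n$ live in varying fibers $\mathbb{P}T_{p_n}S$, and Proposition~\ref{propex3} shows that $\bar{d}f$ need not be continuous on $\mathbb{P}TS$, so injectivity of each $\bar{d}f_p$ gives no control over $\bar{d}f_{p_n}(k_n)$ as $n\to\infty$. More decisively, the map $W$ of Proposition~\ref{propNex} satisfies (a) and (b) but not (c), so any derivation of (c) from (a) and (b) alone---which is all your sketch invokes---cannot succeed. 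The paper's proof (Lemma~\ref{preservetransverse}) argues by contradiction and crucially uses that $f^{-1}$ also preserves $C^1$ curves: if the image had a subsequence converging to $(f(p),\bar{d}f_p(\ell))$, Lemma~\ref{c1throughtangents} produces a $C^1$ curve $\alpha$ in the target through a further subsequence \emph{with the prescribed tangent directions}; then $f^{-1}(\alpha)$ passes through the corresponding $p_{n_k}$ with tangents $k_{n_k}$, which do not converge to $\ell$, so $f^{-1}(\alpha)$ is not $C^1$. The curve-with-prescribed-tangents construction is the technical core you are missing.

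\textbf{Backward direction, claim (i).} The image of your sequence $(q_n,\ell_q)$ under $\bar{d}f$ is $(f(q_n),\bar{d}f_{q_n}(\ell_q))$, not $(f(q_n),T_{f(q)}\alpha)$: you only know $\bar{d}f_q(\ell_q)=T_{f(q)}\alpha$, and since the bundle map is discontinuous the second coordinate is neither constant nor known to converge. Replacing $\ell_q$ by $\ell_{q_n}$ fixes the image computation but then you cannot verify that the domain sequence is transverse for $(q,\ell'')$ without already knowing where the $\ell_{q_n}$ accumulate. The paper (Lemma~\ref{inversetangentline}) avoids (c) entirely for this step: given two subsequential secant limits $m\ne m'$, it builds $C^1$ curves through the two subsequences (Observation~\ref{c1connectthedots}), pushes them forward via (a), observes that both images meet $\alpha$ infinitely often and hence share its tangent at $f(q)$ (Corollary~\ref{c1infinitesharetangentline}), and then injectivity of $\bar{d}f_q$ from (b) forces $m=m'$.
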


\noindent While $\diff^1(S)$ is a subgroup of $\thomeo(S)$ directly from the definition, it is straightforward to see that any elements of $\diff^1(S)$ also has the three properties stated in the Main Theorem. 
Property $(a)$ follows straight from definitions.
For property~$(b)$, the differential maps between the tangent spaces are linear, so they induce homeomorphisms of projective tangent spaces.  Finally,  property $(c)$ is given by elements of $\diff^1(S)$ acting continuously on the tangent bundle.  

\medskip

\noindent \textit{Example elements.} In addition to the our Main Theorem, we also give examples in Section \ref{examples} of several phenomena that appear in elements of $\thomeo(S)$.   These correspond to the following two results:

\begin{prop}\label{propexamples}
There exist elements $f \in \thomeo(S){\setminus} \diff^1(S)$ and sequences of points $p_n \in S$ such that 
\begin{enumerate}[noitemsep,topsep=0pt, label=$(\roman*)$]
\item $\bar{d}f_{p_1}$ is the identity, but $f$ is not differentiable at $p_1$, 
\item the pushforward map $df_{p_2} : T_{p_2}S \rightarrow T_{f(p_2)}S$ is well-defined and maps every element in $T_{p_2}S$ to~$0$, 
\item $\bar{d}f$ is not continuous, and 
\item $f$ is not differentiable at $p_n$, for all $n \in \mathbb{N}$.
\end{enumerate}
\end{prop}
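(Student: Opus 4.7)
The plan is to construct, for each of the four pathologies $(i)$--$(iv)$, an explicit homeomorphism realizing the prescribed non-smooth behavior, and to verify membership in $\thomeo(S)$ using the Main Theorem (Theorem~\ref{theoremconditions}) rather than directly checking preservation of $C^1$ curves. Each construction is local: a perturbation of the identity supported in a small disk in a chart, glued by the identity on $S$ outside.

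For $(i)$, I would use a \emph{twist} in polar coordinates $f_1(r,\theta) = (r, \theta + \phi(r))$ around $p_1$, where $\phi$ is continuous, compactly supported in an interval $(0,\epsilon)$, and chosen so that $\phi(r) \to 0$ but $\phi(r)/r$ oscillates (e.g.\ an interpolation of $r \sin(\log\log(1/r))$ near $r = 0$). Away from $p_1$ the map is a diffeomorphism; at $p_1$, because the twist angle vanishes in the limit, every $C^1$ curve tangent to a line $\ell$ at $p_1$ maps to a curve tangent to the same $\ell$, so $\bar{d}f_{1,p_1} = \id$. The oscillation of $\phi(r)/r$ then obstructs the existence of $df_{1,p_1}$.

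For $(ii)$, I would take $f_2$ to equal the cubing map $(x,y) \mapsto (x^3, y^3)$ near $p_2$, smoothly interpolated to the identity outside a disk. The classical differential $df_{2,p_2}$ is the zero linear map, yet an explicit asymptotic computation shows that a $C^1$ curve through $p_2$ with tangent $[1:a]$ is sent to a $C^1$ curve with tangent $[1:a^3]$, so $\bar{d}f_{2,p_2}$ is the projective map $a \mapsto a^3$, which is a homeomorphism of $\mathbb{P}T_{p_2}S$. For $(iii)$ and $(iv)$, I would place a sequence of disjoint disks $D_n$ accumulating at a point $q$, with a Le~Roux--Wolff-style perturbation supported in $D_n$ and centered at $p_n$. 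In $(iii)$ the perturbations are chosen so that $\bar{d}f_{3,p_n}$ is a nontrivial rotation of $\mathbb{P}T_{p_n}S$ while $\bar{d}f_{3,q} = \id$, producing a discontinuity of $\bar{d}f_3$ at $(q, \ell)$ for appropriate $\ell$. For $(iv)$, I place a copy of the twist from $(i)$ inside each $D_n$ so that $f_4$ fails to be differentiable at every center $p_n$.

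The main obstacle across all four cases is verifying condition $(c)$ of the Main Theorem. Condition $(a)$ reduces to a local check on $C^1$ curves through each $p_i$, and $(b)$ follows from the fact that the induced projective map at each center is a rotation or an odd power (both homeomorphisms of $\mathbb{P}T_{p_i}S$). By contrast, $(c)$ imposes a global constraint on transverse sequences, and in $(iii)$--$(iv)$ one must choose both the size of the disks $D_n$ and the magnitude of the perturbations to decay sufficiently quickly, so that no transverse sequence is created by points jumping from one perturbed disk to another. Controlling this quantitative interaction is, I expect, the hardest step.
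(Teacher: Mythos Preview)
Your construction for $(i)$ does not work. The polar twist $f_1(r,\theta)=(r,\theta+\phi(r))$ with $\phi(r)\to 0$ is in fact differentiable at the center, with $df_{1,p_1}=I$: for $v$ of length $r$ one has $|f_1(v)-v|=2r\,|\sin(\phi(r)/2)|$, so $|f_1(v)-v|/|v|=2|\sin(\phi(r)/2)|\to 0$ regardless of whether $\phi(r)/r$ oscillates. The oscillation of $\phi(r)/r$ is simply irrelevant to the existence of the total differential. The paper produces $(i)$ instead by a Le~Roux--Wolff map $G(x,y)=(x\,g(y/x),\,y\,g(y/x))$: each ray through the origin is sent to itself (so $\bar dG_{0}=\id$), but different rays are rescaled by different factors, so the directional derivatives $dG_0(1,0)=(1,0)$, $dG_0(0,1)=(0,1)$, $dG_0(1,1)=(g(1),g(1))$ are incompatible with any linear map.

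Your construction for $(ii)$ also fails. The coordinate cubing map $(x,y)\mapsto(x^3,y^3)$ has singular Jacobian along the entire $x$- and $y$-axes, not just at the origin. At a point $(a,0)$ with $a\neq 0$, every curve of finite slope is sent to a curve tangent to the $x$-axis, so $\bar df_{2,(a,0)}$ is not injective and condition~$(b)$ of the Main Theorem is violated; concretely, $f_2^{-1}$ takes the $C^1$ parabola $y=(x-a^3)^2$ to a curve with $y^3=(x^3-a^3)^2$, which near $(a,0)$ is the genuine cusp $y\sim c\,|x-a|^{2/3}$ and is not a $C^1$ submanifold. Hence $f_2\notin\thomeo(\mathbb{R}^2)$. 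The paper's example is the \emph{radial} map $H(r,\theta)=(\mathrm{sgn}(r)\,r^2,\theta)$: this is singular only at the origin, preserves all angles (so $\bar dH_p$ is a homeomorphism everywhere), and has $dH_0=0$.

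Since your building blocks in $(i)$ and $(ii)$ are defective, the accumulation constructions you outline for $(iii)$ and $(iv)$ lack a valid input; in particular you have no example with $\bar df_p\neq\id$ at an isolated bad point to feed into $(iii)$. The paper handles $(iii)$ not by stacking perturbations but with a single explicit map $Q(x,y)=(x,\,y\,q(x/y))$ whose induced bundle map $\bar dQ$ is already discontinuous at the origin. For $(iv)$ the paper does place rescaled copies of its working example $(i)$ on disjoint shrinking disks centered at $p_n=(2^{-n},0)$, as you propose, but the key device is to choose the local model to be the identity on a cone of slopes around the $x$-axis: then any $C^1$ curve through the accumulation point is tangent to the $x$-axis there, so by continuity of its tangent line it meets the nontrivial part of only finitely many supports. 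This makes the verification elementary and avoids the quantitative transverse-sequence estimate you anticipated as the hardest step.
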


\begin{prop}\label{propNex}
There exists an element of $\homeo(S){\setminus} \thomeo(S)$ that has properties $(a)$ and $(b)$ from the Main Theorem.
\end{prop}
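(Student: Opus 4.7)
The plan is to construct an explicit $f \in \homeo(S)$ satisfying $(a)$ and $(b)$ of the Main Theorem but violating $(c)$; the contrapositive of the theorem then places $f$ in $\homeo(S) \setminus \thomeo(S)$.

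I would work in a smooth chart around a point $p_0 \in S$, identifying a neighborhood of $p_0$ with an open set in $\mathbb{R}^2$ and placing $p_0 = 0$; outside this chart, $f$ is extended by the identity. Inside the chart the construction has two layers. The first is a base homeomorphism modeled on one of the examples from Proposition~\ref{propexamples}, which is smooth away from $0$, induces a well-defined homeomorphism $\bar{d}f_0$ of $\mathbb{P}T_0 S$, and has $\bar{d}f$ discontinuous at $(0, \ell)$ for some tangent line $\ell$. Superimposed on this are local smooth perturbations supported in disjoint disks $D_n$ around a sequence $\{p_n\} \to 0$ along $\ell$, each prescribing the value of the differential $Df_{p_n}$ so that a fixed slope $k \neq \ell$ has $\bar{d}f_{p_n}(k)$ converging to the line $\ell'$ along which $\{f(p_n)\}$ approaches $0$.

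Property $(b)$ will follow at each $p \neq 0$ from the fact that $f$ is a smooth diffeomorphism there, and at $0$ from the construction of the base map, which provides the homeomorphism $\bar{d}f_0$ of $\mathbb{P}T_0 S$. Property $(a)$ reduces to verifying that the local perturbations can be chosen small enough to preserve $C^1$-regularity of every $C^1$ curve through $0$; curves with tangent different from $\ell$ intersect only finitely many $D_n$ near $0$, while the nontrivial case of curves tangent to $\ell$ at $0$ requires the perpendicular-to-$\ell$ perturbations inside each $D_n$ to have magnitude $o(|p_n|)$. The failure of $(c)$ will then be immediate from the sequence $\{(p_n, k)\}$: it is transverse for $(0, \ell)$ since $k \neq \ell$, but by construction its image converges to $(0, \ell')$ in $\mathbb{P}TS$ while $\{f(p_n)\}$ approaches $0$ along $\ell'$, so the image is non-transverse.

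The hard part will be ensuring compatibility of the two layers: the base map must have $\bar{d}f$ discontinuous in a manner that permits the perturbations to align $\bar{d}f_{p_n}(k)$ with the secant direction $\ell'$, while the perturbations themselves must remain small enough to preserve the $C^1$-regularity of every $C^1$ curve through $0$. Choosing the parameters---the radii $r_n$ of the disks, the linear maps $Df_{p_n}$, and the specific form of the base map---so that all conditions are simultaneously satisfied is the principal technical step.
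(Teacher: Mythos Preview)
Your route differs substantially from the paper's and is more roundabout. The paper writes down a single explicit homeomorphism $W$ of $\mathbb{R}^2$ in polar coordinates (built from a $C^1$ map $w:[0,1]\to[0,1]$ with $w'(0)=w'(1)=0$), verifies (a) and (b) directly, and then shows $W\notin\thomeo(\mathbb{R}^2)$ by observing that $W$ sends the non-$C^1$ curve $y=x^2\sin(1/x)$ to a $C^1$ curve, so that $W^{-1}$ fails to preserve $C^1$ curves. No appeal to the Main Theorem is made. By contrast, you plan a two-layer construction (a base map from Proposition~\ref{propexamples} plus infinitely many local perturbations) tailored to violate~(c), and then invoke the contrapositive of the forward direction. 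That is not circular, but it makes your argument depend on Lemma~\ref{preservetransverse}; the paper places Proposition~\ref{propNex} \emph{before} the Main Theorem precisely to motivate condition~(c) in a self-contained way.

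There is also a genuine gap in your sketch of~(a). For curves $\gamma$ tangent to $\ell$ at $0$ you assert that it suffices to make the perpendicular-to-$\ell$ displacement of each perturbation $o(|p_n|)$. That is only a $C^0$ estimate on $f(\gamma)$: it guarantees that $f(\gamma)$ still approaches $f(0)$ along the correct line, but says nothing about convergence of its \emph{tangent directions}, which is what $C^1$-regularity at $f(0)$ requires. Your perturbations are designed so that $df_{p_n}$ moves the direction $k$ by an amount bounded away from zero for every $n$; a curve tangent to $\ell$ at $0$ can pass through infinitely many $D_n$ (for instance the line $\ell$ itself, if the $p_n$ lie on it), and inside each $D_n$ its tangent is then subjected to a derivative-level perturbation of order one. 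Without a further constraint---for example that each local perturbation acts nearly trivially on directions close to $\ell$ throughout $D_n$ while still rotating $k$ as required---the tangent lines of $f(\gamma)$ need not converge and (a) can fail. You flag the parameter-matching as ``the principal technical step'' but do not carry it out, and the condition you do state is not the correct one.
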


\noindent In particular, this final result highlights the necessity of property $(c)$ from the Main Theorem.

\subsection{The projective tangent bundle and transverse sequences}\label{introtangentbundle}

In this section, we discuss the projective tangent bundle on a surface and a different way to think about transverse sequences.  Since we are exclusively working with surfaces,  the projective tangent bundle is a circle bundle over the surface.  

\medskip

\noindent \textit{Lifts of $C^1$ curves in the projective tangent bundle.   }  One distinct characteristic of $C^1$ curves is that they have tangent lines at every point and this tangent line varies continuously.  As a consequence,  every $C^1$ curve uniquely defines a continuous curve in the projective tangent bundle.  

But not all continuous curves in the projective tangent bundle are lifts of $C^1$ curves.  One example is a curve in $\mathbb{P}T\mathbb{R}^2$ with points in a neighborhood of the origin in $\mathbb{R}^2$ along the $x$-axis,  but with lines in the vertical direction of the projective tangent spaces for every point.  There is a disconnect between the specified tangent directions and the actual horizontal direction along which the points are approaching the origin.

\begin{figure}[h]
\centering
\includegraphics[width=119mm]{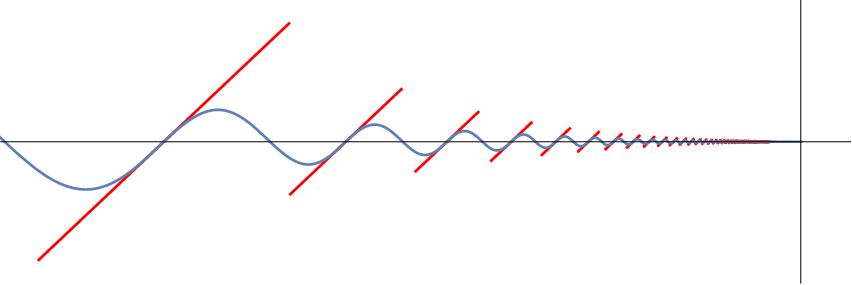}
\caption{A transverse sequence of tangent lines from $y=x^2 \sin(1/x)$ with slope 1 that converge to the origin along the $x$-axis}
\end{figure}

\noindent \textit{Transverse sequences as obstructions to $C^1$ curves.  } In this context, a transverse sequence is an obstruction to the existence of a curve in the projective tangent bundle.  The points $\{(p_n, k_n)\}$ in $\mathbb{P}TS$ corresponding to a transverse sequence cannot converge to the point $(p,\ell)$.  But any differentiable curve in $S$ that passes through infinitely many of the $p_n$ must have a tangent line of $\ell$ at $p$.  So the lift of a curve in $S$ that contains a subsequence of $\{(p_n, k_n)\}$ is not continuous and cannot be a curve in $\mathbb{P}TS$. 

Similarly,  any differentiable curve that is not $C^1$ has tangent lines that do not vary continuously and must contain a transverse sequence.  Thus property $(c)$ from the Main Theorem prevents a such a curve from being mapped to a $C^1$ curve.  This prevents the inverse from sending any $C^1$ curve to a non-$C^1$ curve.

\subsection{Further questions}\label{introquestions}
The properties of $\thomeo(S)$ and the examples in Section \ref{examples} suggest several possible questions about $\thomeo(S)$.  We have listed a few particular questions below.

\medskip 
\noindent \textbf{Question 1.} Can the set of non-differentiable points in an element of $\thomeo(S)$ be uncountable? Or have positive measure?

\medskip
\noindent \textbf{Question 2.} Is the group $\thomeo(S)$ perfect?

\medskip
\noindent \textbf{Question 3.} Does $\thomeo(S)$ satisfy automatic continuity?

\medskip 

\noindent The previous two questions have been studied for both $\homeo(S)$ and $\diff(S)$. For compact $S$, they have both been answered positively,  but the tools used for $\homeo(S)$ are different from those used for $\diff(S)$.    
Further discussion about these properties and others can be found in an article by Mann~\cite{Mann}. 

\medskip

\noindent \textbf{Question 4.} What is a natural topology on $\homeo^1(S)$? 

\medskip

\noindent  This question was suggested by Belegradek in a personal comment.  Since $\homeo^1(S)$ is a subgroup of $\homeo(S)$, then $\thomeo(S)$ inherits the compact-open topology from $\homeo(S)$.  But this might not be the one most natural one for $\homeo^1(S)$ since it does not account for the additional structure on the projective tangent bundle.  For example, the compact-open topologies for $\diff^1(S)$ also require that derivatives also converge uniformly.

\subsection{Paper structure and overview of the proof}\label{introproofidea}
Our paper is split into three parts.  The first part is Section~\ref{examples}, where we give several types of examples showing Propositions \ref{propexamples} and \ref{propNex} stated above.  We then prove our Main Theorem.  The proof is split into two parts, each corresponding to one of the logical directions of the statement.  The forward direction involves recovering each of the three properties stated in the theorem from an element of $\thomeo(S)$,  while the reverse direction requires us to show that any map with these properties is an element of $\thomeo(S)$.  

\medskip

\noindent \textit{Forward direction.} We complete this step in Section~\ref{sectionforward}.  
To prove the forward direction, we must have that any element of $\homeo^1(S)$ maps every $C^1$ curve to a $C^1$ curve,  induces well-defined homeomorphisms on the projective tangent space,  and sends transverse sequences to transverse sequences.  

The first property follows directly from the definition of $\thomeo(S)$.  The next step is to show that an element of $\thomeo(S)$ induces a well-defined map on $\mathbb{P}TS$.   We prove this by showing that any homeomorphism of $S$ that sends every $C^1$ curve to a $C^1$ curve must respect the equivalence classes at each point of curves with the same tangent line.

To obtain the second property, we utilize the identification of the projective tangent space with $S^1$ to obtain that the induced map is a homeomorphism. This follows from the fact that bijections of $S^1$ preserving cyclically ordered triples are homeomorphisms.

Finally,  to show that an element of $\thomeo(S)$ sends all transverse sequences to transverse sequences, we work from a contradiction.  We assume that the homeomorphism maps some transverse sequence to a sequence that is not transverse and has a convergent subsequence.  We are then able to construct a $C^1$ curve whose preimage is not $C^1$.

\medskip

\noindent \textit{Reverse direction.}
This step of our proof is completed in Section~\ref{sectionreverse}.  Our goal is to show that any homeomorphism satisfying the three properties from our Main Theorem must be an element of $\thomeo(S)$.  From the first property, we know that $f$ maps every $C^1$ curve to a $C^1$ curve. Thus it only remains to show that $f^{-1}$ also maps every $C^1$ curve to a $C^1$ curve.  

We use the homeomorphism on the tangent space at each point to recover that every image of a $C^1$ curve under $f^{-1}$ must have a unique tangent line at every point.  We then use transverse sequences to show that these tangent lines must vary continuously, which is an equivalent characterization of $C^1$ curves.

\subsection{Acknowledgments}

We would like to thank our advisor Dan Margalit for his generous support and encouragement.  In particular, we would like to thank him for his frequent suggestions of ways to simplify the description of $\homeo^1(S)$ that gave us the push to find increasingly interesting examples.

We thank Alex Nolte for his relentless enthusiasm and interest in this project.  We especially thank him for his contribution to the arguments involving transverse sequences.

We thank Igor Belegradek for his interest and questions.  We also thank Noah Caplinger, Sean Eli,  Daniel Minahan,  and Roberta Shapiro for helpful conversations and general encouragement for this project. The author was partially supported by the National Science Foundation under Grants No.  DMS-1745583 and DMS-2417920.

\section{Example elements from $\mathbf{\bhomeo^1(S)}$} \label{examples}
In this section, we prove Propositions \ref{propexamples} and \ref{propNex} from Section~\ref{intro}.   We accomplish this by giving several types of maps that are elements of $\thomeo \!\left(\mathbb{R}^2\right){\setminus}\diff^1\!\left(\mathbb{R}^2\right)$.  The primary goal for many of these examples is to dismiss potentially simpler characterizations for elements of $\thomeo(S)$.  Since the properties of $\thomeo(S)$ from our Main Theorem are dependent primarily on the local conditions around a point,  we give these examples explicitly on $\mathbb{R}^2$.  

\subsection{Descriptions of the examples} \label{subexamples}

In this section,  we start with the examples for parts $(i)$ and $(ii)$ from Proposition~\ref{propexamples}.  We then give a non-example that proves Proposition~\ref{propNex}. Finally, we conclude this section with the examples for parts $(iii)$ and $(iv)$ from Propostion~\ref{propexamples}.

\subsection*{Example $\mathbf{(i)}$: Modifying magnitudes with Le~Roux--Wolff} 

This example proves the following proposition related to part $(i)$ from Proposition~\ref{propexamples}.

\begin{prop}\label{propex1}
There exists an element $G \in \thomeo \!\left(\mathbb{R}^2\right){\setminus}\diff^1\!\left(\mathbb{R}^2\right)$ such that $\bar{d}G_{(0,0)}$ is the identity, but $G$ is not differentiable at $(0,0)$. 
\end{prop}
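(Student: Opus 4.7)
The plan is to define $G:\mathbb{R}^2\to\mathbb{R}^2$ by $G(0,0)=(0,0)$ and $G(x,y)=\phi(\arg(x+iy))\cdot(x,y)$ off the origin, where $\phi:\mathbb{R}\to(0,\infty)$ is a smooth, strictly positive, non-constant, $\pi$-periodic function; equivalently, $G(r,\theta)=(\phi(\theta)r,\theta)$ in polar coordinates. The $\pi$-periodicity is imposed so that each line through the origin is carried to itself without acquiring a kink at $(0,0)$. The inverse $G^{-1}$ has the same form with $1/\phi$ replacing $\phi$, so most arguments apply symmetrically.

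I would first dispatch the easy structural properties. $G$ is continuous on $\mathbb{R}^2$ because $|G(x,y)|\le(\max\phi)\,|(x,y)|$, and it is a smooth diffeomorphism of $\mathbb{R}^2\setminus\{(0,0)\}$. Since the inverse is of the same form, $G$ is a homeomorphism. For each $v\neq 0$ the one-sided directional limit $\lim_{t\to 0^+}G(tv)/t=\phi(\theta_v)\,v$ exists but is not linear in $v$ because $\phi$ is non-constant, so $G$ fails to be Fréchet differentiable at the origin; in particular $G\notin\diff^1(\mathbb{R}^2)$.

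The central step is to show $G\in\thomeo(\mathbb{R}^2)$, i.e., that both $G$ and $G^{-1}$ map every $C^1$ curve to a $C^1$ curve. Away from the origin this is automatic since $G$ is smooth. The critical case is a $C^1$ curve $\gamma$ with $\gamma(0)=(0,0)$ and $\gamma'(0)=v\neq 0$. Setting $\theta(t)=\arg(\gamma(t))$ and differentiating for $t\neq 0$ gives
\[ (G\circ\gamma)'(t) \;=\; \phi'(\theta(t))\,\theta'(t)\,\gamma(t) \;+\; \phi(\theta(t))\,\gamma'(t). \]
Using $\theta'(t)=(\gamma_1\gamma_2'-\gamma_2\gamma_1')/|\gamma(t)|^2$ together with the expansions $\gamma(t)=tv+o(t)$ and $\gamma'(t)=v+o(1)$, a short calculation shows $\gamma_1\gamma_2'-\gamma_2\gamma_1'=o(t)$ and $|\gamma(t)|^2\sim t^2|v|^2$, so $\theta'(t)\,\gamma(t)\to 0$. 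The second term tends to $\phi(\theta_v)\,v$, which also equals $(G\circ\gamma)'(0)$ computed directly from the difference quotient; hence $(G\circ\gamma)'$ is continuous at $0$ and $G\circ\gamma$ is $C^1$. The same reasoning with $1/\phi$ handles $G^{-1}$, completing $G\in\thomeo(\mathbb{R}^2)$. Once this is established, $\bar{d}G_{(0,0)}$ sends the line spanned by $v$ to the line spanned by $\phi(\theta_v)\,v$, which is the same line, so $\bar{d}G_{(0,0)}=\id$.

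The hard part will be the cancellation $\gamma_1\gamma_2'-\gamma_2\gamma_1'=o(t)$, which is precisely where the $C^1$ hypothesis on $\gamma$ enters: it encodes the statement that $\gamma(t)$ and $\gamma'(t)$ are collinear to leading order as $t\to 0$, i.e., the genuine direction of approach of $\gamma$ to the origin agrees with its tangent line $\ell$. Without this cancellation the first term in $(G\circ\gamma)'(t)$ could oscillate as $t\to 0$, which in the language of the introduction is exactly the appearance of a transverse sequence for $(0,\ell)$ obstructing the smoothness of $G\circ\gamma$.
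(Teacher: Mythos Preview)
Your construction is essentially the same as the paper's: both take $G$ to be a direction-dependent radial scaling, $G(x,y)=\psi(\text{direction})\cdot(x,y)$, so that lines through the origin are carried to themselves (forcing $\bar{d}G_{(0,0)}=\id$) while the scaling factor is non-constant (killing Fr\'echet differentiability at $0$). The paper parameterizes direction by slope, writing $G(x,y)=(x\,g(y/x),\,y\,g(y/x))$ with $g$ equal to $1$ outside a compact interval, whereas you parameterize by angle and impose $\pi$-periodicity on $\phi$; these are equivalent reformulations. The paper simply asserts that a direct computation shows $C^1$ curves through the origin stay $C^1$, while you actually carry out that computation via the cancellation $\gamma_1\gamma_2'-\gamma_2\gamma_1'=o(t)$, which is a welcome addition and exactly the content the paper suppresses.
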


\begin{proof}
We build $G:\mathbb{R}^2 \rightarrow \mathbb{R}^2$ similar to an example given by Le Roux--Wolff \cite[Section 5]{LRW}.  In their example, they change $y$-coordinates based on the slope of the line to the origin.   We modify their construction by modifying both coordinates instead of only one of them.  

Let $g: \mathbb{R} \rightarrow \mathbb{R}^+$ be a smooth map such that $g(1) \neq 1$ and $g(t)=1$ whenever $t$ is outside of the interval $[1/2,2]$.  Define the map $G:\mathbb{R}^2\rightarrow\mathbb{R}^2$ by $$G(x,y) = \left\{ \begin{array}{ll} (x \, g(y/x), y \, g(y/x) ) & \mbox{if } x \neq 0 \\  (x, y) & \mbox{if } x =0 \end{array} \right.$$
Note that $G$ is a $C^1$ diffeomorphism at every point except the origin.  We can show by direct computation that every $C^1$ curve through the origin is mapped by $G$ to a $C^1$ curve.  Since $G^{-1}$ is of the same form as $G$ by replacing $g$ by $1/g$, then $G^{-1}$ also maps every $C^1$ curve to a $C^1$ curve. Thus $G \in \thomeo \!\left(\mathbb{R}^2 \right)$.  

\begin{figure}[h]
\centering
\includegraphics[width=119mm]{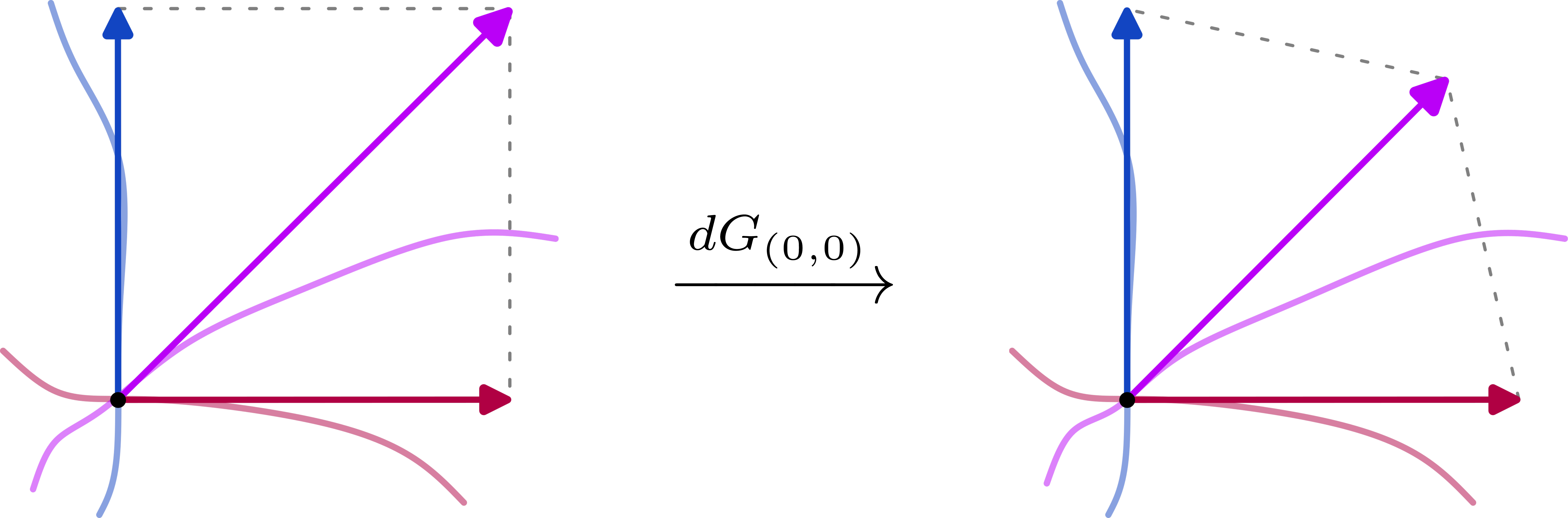} 
\caption{The action of $G$ on the vectors $(1, 0)$, $(0, 1)$, and $(1,1)$ based at the origin}
\end{figure}

To show that $G \notin \diff^1 \! \left( \mathbb{R} ^2 \right)$, we consider the action of $G$ on vectors based at the origin.  By changing the magnitude of some of these vectors, we prevent the pushforward map $dG_{(0,0)}$ from being a linear map.   Specifically, $dG_{(0,0)}(1,0) = (1,0)$ and $dG_{(0,0)}(0,1) = (0,1)$,  while $dG_{(0,0)}(1,1) = (g(1), g(1)) \neq (1,1)$.  

Since both $x$ and $y$-coordinates are changed by the same multiplicative factor,  this map does not change the tangent direction of any $C^1$ curve through the origin.  So the induced map $\bar{d}G_{(0,0)}$ is the identity map.  
\end{proof}

\subsection*{Example $\mathbf{(ii)}$: Collapsing and expanding tangent vectors}

For part $(ii)$ from Proposition~\ref{propexamples}, this example proves the following:

\begin{prop}\label{propex2}
There exists an element $H \in \thomeo \!\left(\mathbb{R}^2\right){\setminus}\diff^1\!\left(\mathbb{R}^2\right)$ fixing the origin such that the pushforward map $dH_{(0,0)} : T_{(0,0)}\mathbb{R}^2 \rightarrow T_{(0,0)}\mathbb{R}^2$ is well-defined and maps every element in $T_{(0,0)}\mathbb{R}^2$ to~$0$.
\end{prop}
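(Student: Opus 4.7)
\emph{Proof proposal.} The plan is to build $H$ as a radial contraction that squeezes every tangent vector at the origin to zero at a quadratic rate, while agreeing with the identity outside a disk. Concretely, set $H(p)=\psi(|p|)\cdot p/|p|$ for $p\neq 0$ and $H(0)=0$, where $\psi\colon[0,\infty)\to[0,\infty)$ is a smooth homeomorphism with $\psi(r)=r^{2}$ on $[0,1/2]$ and $\psi(r)=r$ for $r\geq 1$. By construction $H$ is a homeomorphism of $\mathbb{R}^{2}$ fixing the origin and a smooth diffeomorphism on $\mathbb{R}^{2}\setminus\{0\}$.

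The pushforward at the origin is immediate. Along the ray $t\mapsto tv$ we compute $H(tv)/t = \bigl(\psi(|t||v|)/(|t||v|)\bigr)\,v$, which tends to $0$ as $t\to 0$ because $\psi(r)/r\to 0$. Hence $dH_{(0,0)}$ is well-defined on all of $T_{(0,0)}\mathbb{R}^{2}$ and is identically zero. In particular $dH_{(0,0)}$ is not a linear isomorphism, so $H\notin\diff^{1}(\mathbb{R}^{2})$.

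The main obstacle is verifying that both $H$ and $H^{-1}$ send every $C^{1}$ curve to a $C^{1}$ curve. Curves avoiding the origin are handled by the smoothness of $H$ there, so I focus on a $C^{1}$ curve $\gamma$ through $0$. Using that $H$ commutes with rotations fixing the origin, I may assume $\gamma$ is tangent to the $x$-axis at $0$ and write it locally as the graph $(x,h(x))$ with $h(0)=h'(0)=0$. Since $H$ is radial, $H(x,h(x))=\rho(x)(x,h(x))$ with $\rho(x)=\sqrt{x^{2}+h(x)^{2}}$, and the image is the graph of the function $v(u)=u\cdot h(x(u))/x(u)$ over the strictly monotone reparametrization $u(x)=x\rho(x)$ of the $x$-variable. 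The heart of the verification is that the two limits $h(x)/x\to 0$ and $h'(x)\to 0$, both immediate from $\gamma$ being $C^{1}$ with tangent along the $x$-axis, force $v'(u)\to 0$ as $u\to 0$, so the image is a $C^{1}$ submanifold through the origin. A parallel computation with $u(x)=x/\sqrt{\rho(x)}$ handles $H^{-1}(p)=\psi^{-1}(|p|)\,p/|p|$, completing the verification that $H\in\thomeo(\mathbb{R}^{2})$.
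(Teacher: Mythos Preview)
Your construction is essentially the paper's: both take the radial map $p\mapsto |p|\cdot p$ (i.e.\ $(r,\theta)\mapsto(r^{2},\theta)$) near the origin, observe that $|H(p)|/|p|\to 0$ forces $dH_{(0,0)}=0$, and then check that $H$ and $H^{-1}$ preserve $C^{1}$ curves through the origin. The only cosmetic difference is that you splice in a cutoff so that $H$ is the identity outside a disk, which the paper does not bother with.

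Where the two differ is in the verification step. The paper simply asserts that after the reparametrization $s=\operatorname{sgn}(t)\,t^{2}$ (respectively $s=\operatorname{sgn}(t)\sqrt{|t|}$ for the inverse) the image curve is $C^{1}$, leaving the computation to the reader. You instead rotate the tangent to the $x$-axis, write the curve as a graph $(x,h(x))$ with $h(0)=h'(0)=0$, and check directly that the image is a $C^{1}$ graph over the new variable $u=x\rho(x)$ (resp.\ $u=x/\sqrt{\rho(x)}$) by showing $dv/du\to 0$. Your argument is more explicit and makes transparent exactly which facts about $h$ are used (namely $h(x)/x\to 0$ and $h'(x)\to 0$), at the cost of a longer computation; the paper's reparametrization trick is shorter but hides the same calculation. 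Both are correct and amount to the same idea.
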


\begin{proof}
Consider the map $H: \mathbb{R}^2 \rightarrow \mathbb{R}^2$ defined in polar coordinates as follows:
\begin{flalign*}
H: (r,  \theta) & \mapsto (sgn(r) \, r^2, \theta) 
\end{flalign*}

\begin{figure}[h]
\centering
\includegraphics[width=119mm]{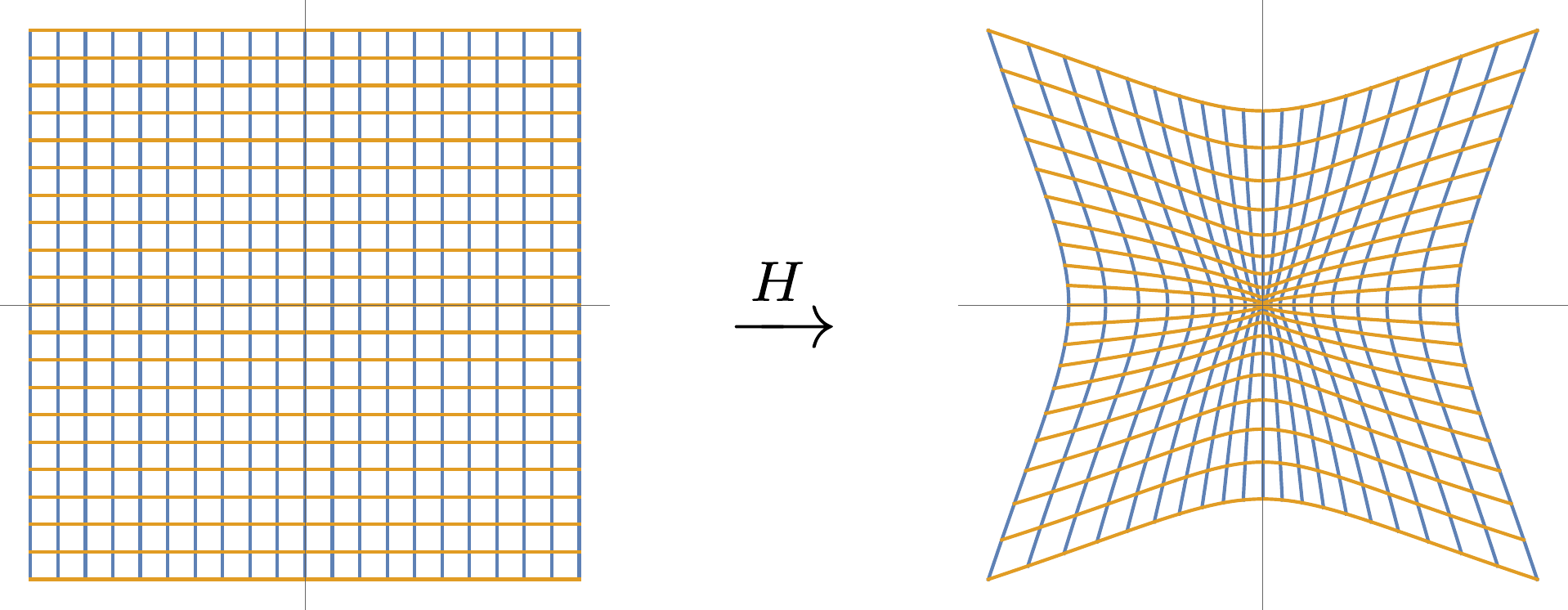}
\caption{The action of $H$ on a grid near the origin}
\end{figure}

For any unit-speed $C^1$ parameterization of a curve $\gamma(t)$ such that $\gamma(0)=(0,0)$, the curve $H \circ \gamma$ has a tangent vector with magnitude $0$ at the origin.  By reparameterizing by $s=sgn(t)\,  t^2$, it can be shown that the image of $H\circ \gamma$ is still a $C^1$ curve. 

The inverse map
\begin{flalign*}
H^{-1}: (r,  \theta) & \mapsto (sgn(r) \, \sqrt{|r|}, \theta) 
\end{flalign*}

\begin{figure}[h]
\centering
\includegraphics[width=119mm]{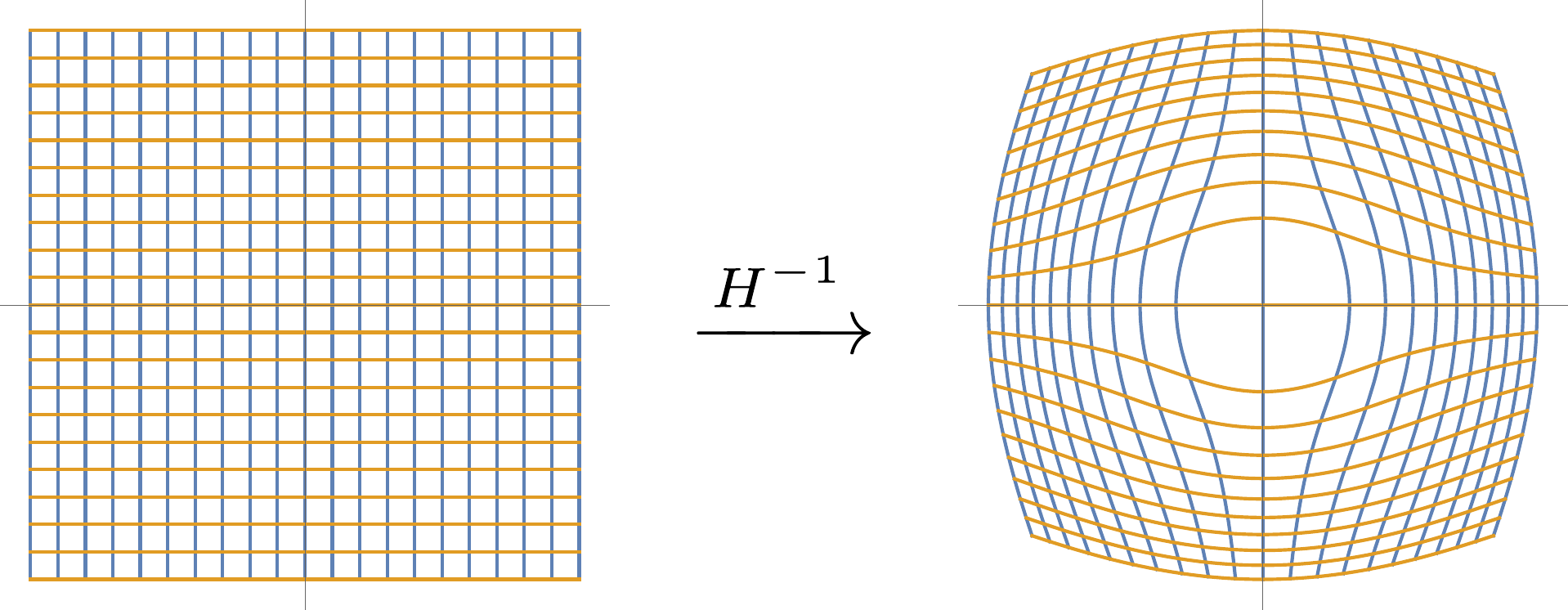}
\caption{The action of $H^{-1}$ on a grid near the origin}
\end{figure}
\noindent acts similarly.  The pushforward map $dH^{-1}_{(0,0)}$ sends every nonzero vector based at the origin to a vector with infinite magnitude.  Again by reparameterizing,  it can be shown that $H^{-1}$ also maps every $C^1$ curve to a $C^1$ curve. Thus $H$ is an element of $\thomeo\! \left(\mathbb{R}^2\right)$.
\end{proof}

The examples of $H$ and $H^{-1}$ both stem from the fact that we are concerned about the image of the curve and not a particular parameterization.  This justifies that the action on $\mathbb{P}TS$ is the appropriate object to consider for elements of $\thomeo(S)$ instead of the pushforward maps.

\subsection*{Proving Proposition~\ref{propNex} : A cautionary non-example for $\mathbf{\bthomeo(S)}$}
We now take a brief respite from examples of $\thomeo\!\left(\mathbb{R}^2\right)$ to give an example of an element from $\homeo\!\left( \mathbb{R}^2 \right){\setminus}\thomeo\!\left(\mathbb{R}^2\right)$.  The map $W$ given below takes every $C^1$ curve to a $C^1$ curve, but the inverse $W^{-1}$ takes some $C^1$ curves to curves that are not $C^1$.  

\begin{proof}[Proof of Proposition~\ref{propNex}]
 To formally define $W$, we start with a $C^1$ diffeomorphism $w: [0,1] \rightarrow [0,1]$ such that $w(0)=0$, $w(1)=1$, $w'(0)=0 = w'(1)$,   and  $w'(t) < \infty$.  For $\theta \in [0,\pi)$, we then define

$$W(r, \theta) = \left\{ \begin{array}{ll}  \left(r, \pi \, w\left( \theta / \pi \right) \, (1- w(|r|)) + \theta \, w(|r|) \right) & \mbox{if } |r|<1 \\
(r, \theta) & \mbox{if } |r| \geq 1
\end{array} \right. $$

\begin{figure}[h]
\centering
\includegraphics[width=119mm]{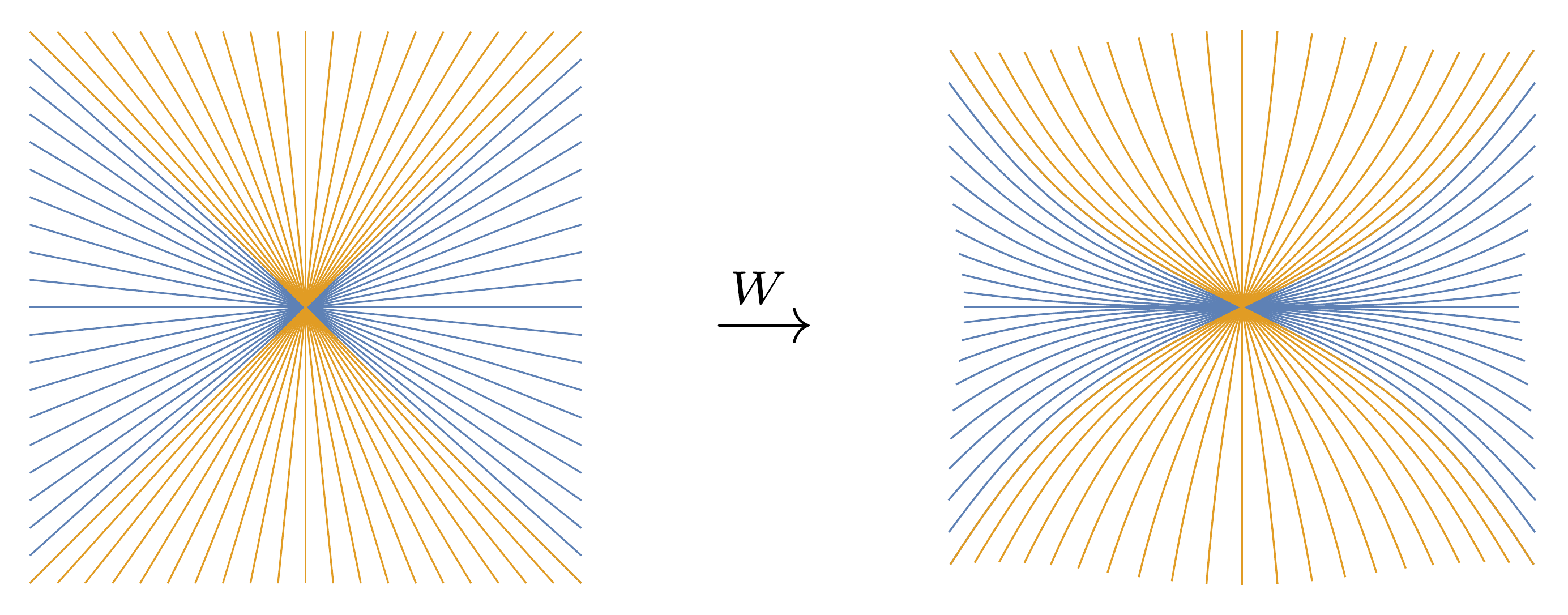}
\caption{The action of $W$ on lines passing through the origin}
\end{figure}

It can be shown that $W$ is a homeomorphism that sends every $C^1$ curve to a $C^1$ curve.  Moreover, it induces well-defined homeomorphisms $\bar{d}W_p$ on the projective tangent spaces at every point $p \in \mathbb{R}^2$.  

But $W^{-1}$ does not map every $C^1$ curve to a $C^1$ curve.  At the origin, $\bar{d}W_{(0,0)}$ compresses the angles near $0$ and $\pi$ in such a way that $W$ sends the curve defined by $y=x^2 \sin (1/x)$ to a $C^1$ curve.  Thus $W$ is not an element of $\thomeo\!\left(\mathbb{R}^2\right)$.
\end{proof}

\subsection*{Example $\mathbf{(iii)}$: Non-continuity on the projective tangent bundle}

We return back to examples of $\thomeo \!\left(\mathbb{R}^2\right)$.  Recall that elements of $\thomeo(S)$ induce a map on $\mathbb{P}TS$.  By property $(b)$ from our Main Theorem, we know that this map on $\mathbb{P}TS$ must be a bijection.  But the following result related to part $(iii)$ of Proposition~\ref{propexamples} shows that this induced map does not have to be continuous.  

\begin{prop}\label{propex3}
There exists an element $Q \in \thomeo \!\left(\mathbb{R}^2\right){\setminus}\diff^1\!\left(\mathbb{R}^2\right)$ such that $\bar{d}Q$ is not continuous.
\end{prop}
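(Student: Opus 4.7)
The plan is to take $Q$ to be the original Le Roux--Wolff map, which modifies only one coordinate (unlike Example $(i)$, which modifies both). Specifically, let
$$Q(x,y) = \begin{cases} (x,\; y \cdot g(y/x)) & \text{if } x \neq 0, \\ (x, y) & \text{if } x = 0, \end{cases}$$
with $g$ the same bump function as in Example $(i)$ (so $g \equiv 1$ off $[1/2, 2]$, $g(1) \neq 1$, and the perturbation is small enough that $Q$ is a homeomorphism). That $Q \in \thomeo\!\left(\mathbb{R}^2\right) \setminus \diff^1\!\left(\mathbb{R}^2\right)$ is exactly the original example of \cite[Section 5]{LRW}; the verification parallels the one in Proposition \ref{propex1}, since $Q$ is a smooth diffeomorphism off the origin and a direct computation shows both $Q$ and $Q^{-1}$ send $C^1$ curves to $C^1$ curves. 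Non-differentiability at the origin then follows from the fact that, say, $dQ$ applied to $(1,1)$, $(1,0)$, and $(0,1)$ does not satisfy linearity.

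The main computation is the behavior of $\bar{d}Q$ at two kinds of points. Since $g \equiv 1$ outside $[1/2, 2]$, the map $Q$ restricts to the identity on the open horn $\{|y/x| < 1/2\}$, so in particular $Q$ agrees with the identity in a neighborhood of each point $p_n := (1/n, 0)$; hence $\bar{d}Q_{p_n} = \id$. At the origin, by contrast, a sequence converging to $(0,0)$ along slope $m \in [1/2, 2]$ is sent by $Q$ to a sequence converging along slope $m \cdot g(m)$, so $\bar{d}Q_{(0,0)}$ sends the projective class of slope $m$ to that of $m \cdot g(m)$. This is a nontrivial projective homeomorphism because $g(1) \neq 1$.

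To exhibit the discontinuity of $\bar{d}Q$, I would then take the sequence $(p_n, \ell_n)$ in $\mathbb{P}T\mathbb{R}^2$ with $p_n = (1/n, 0)$ and $\ell_n$ the class of slope $1$, which converges in $\mathbb{P}T\mathbb{R}^2$ to $((0,0), \ell)$ where $\ell$ is also the class of slope $1$. By the first computation, $\bar{d}Q(p_n, \ell_n) = (p_n, \ell_n) \to ((0,0), \ell)$; but by the second, $\bar{d}Q((0,0), \ell)$ is the point $((0,0), \ell')$, where $\ell'$ is the class of slope $g(1) \neq 1$. Since $\ell \neq \ell'$ in $\mathbb{P}T_{(0,0)}\mathbb{R}^2$, the map $\bar{d}Q$ fails to be continuous at $((0,0), \ell)$.

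The main obstacle is justifying that both $Q$ and $Q^{-1}$ preserve $C^1$-ness of curves through the origin; this is the content of \cite[Section 5]{LRW}, and reduces (for a $C^1$ graph $y = f(x)$ through the origin with $f'(0) = m$) to showing that $x \mapsto f(x) \cdot g(f(x)/x)$ is $C^1$ at $0$ with derivative $m \cdot g(m)$, using the fact that $f'(x) - f(x)/x \to 0$ as $x \to 0$. Once this step is in place, the remainder of the argument is bookkeeping and direct calculation.
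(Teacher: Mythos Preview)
Your argument is correct and follows the same overall strategy as the paper: build a Le Roux--Wolff-type map and exhibit an explicit sequence in $\mathbb{P}T\mathbb{R}^2$ along which $\bar{d}Q$ fails to be continuous. The paper's concrete map is different, however: it takes $Q(x,y) = (x,\, y\, q(x/y))$ for $y \neq 0$ (identity on $y=0$), with $q(0)=1$, $q'(0)=1$, $q \equiv 1$ outside $[-1,1]$, and $q(t) > t\,q'(t)$, and uses the sequence $((0,1/n),\,0) \to ((0,0),\,0)$. There the discontinuity arises because the Jacobian at $(0,1/n)$ carries a shear term coming from $q'(0)=1$, sending the horizontal direction to slope~$1$, while $\bar{d}Q_{(0,0)}$ fixes the horizontal direction. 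Your choice---the unmodified Le Roux--Wolff map with $g(y/x)$---is arguably cleaner for this purpose: since your $Q$ is literally the identity in a neighborhood of each $(1/n,0)$, no derivative computation is needed at the sequence points, and the discontinuity is isolated entirely in the nontrivial action of $\bar{d}Q_{(0,0)}$ on slope~$1$. Either construction proves the proposition with the same amount of work.
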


\begin{figure}[h!]
\centering
 \raisebox{.35in}{\includegraphics[width=119mm]{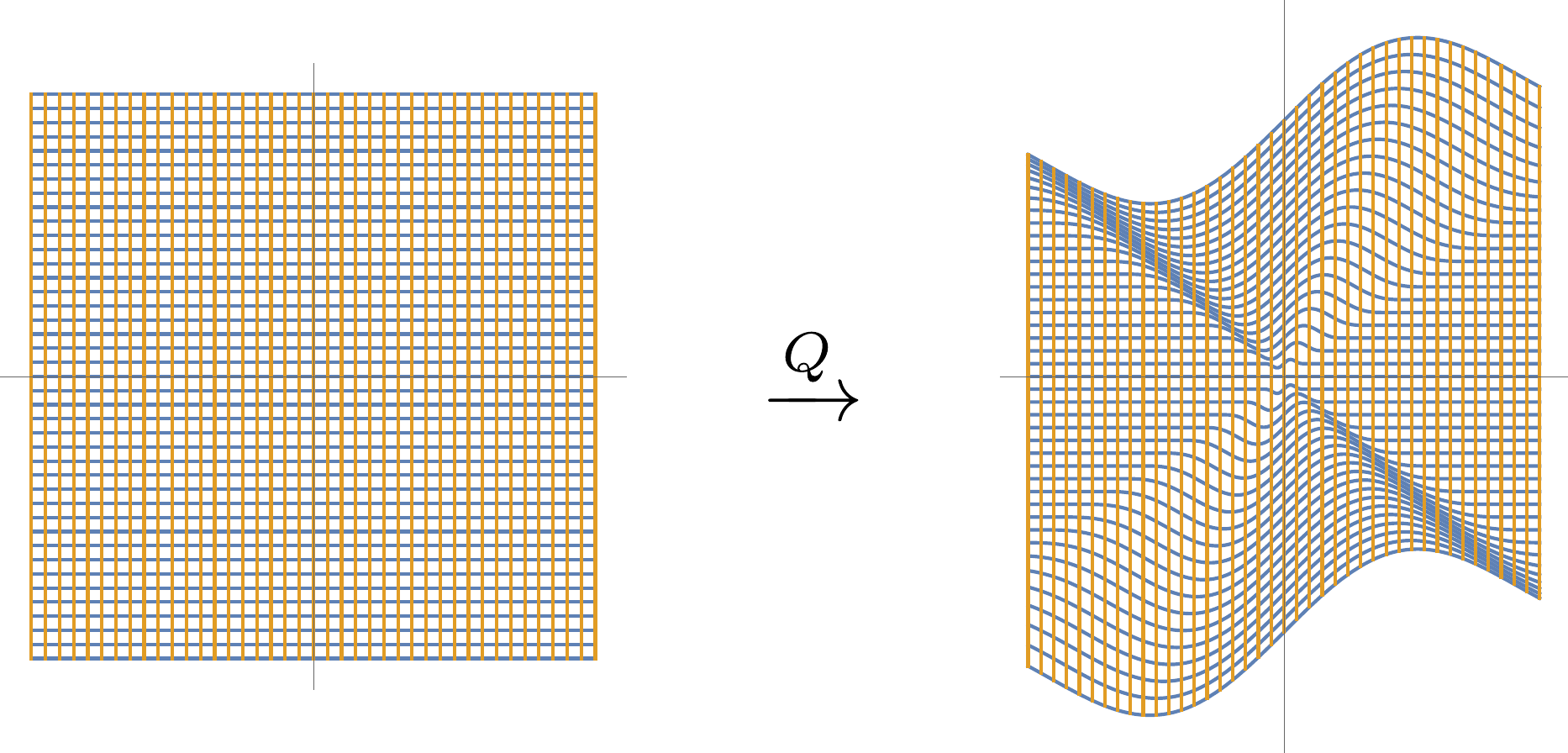}} 
\caption{The action of $Q$ on a grid near the origin}
\end{figure}

\begin{proof}
First, take a $C^1$ map $q: \mathbb{R} \rightarrow \mathbb{R}^+$ such that $q(0) = 1, q'(0) = 1$, and $q(t) = 1$ whenever $t \notin [-1,1]$. In addition,  pick such a $q$ so that $q(t) > t \, q'(t)$ for all $t$.  

We then define $Q: \mathbb{R}^2 \rightarrow \mathbb{R}^2$ by
$$Q(x,y) = \left\{ \begin{array}{ll} (x , y \,  q(x/y) ) & \mbox{if } y \neq 0 \\  (x, y) & \mbox{if } y =0 \end{array} \right.$$

As before, we denote elements of $\mathbb{P}T\mathbb{R}^2$ by $(p,  m)$ where $p \in \mathbb{R}^2$ is a point and $m \in \mathbb{P}T_p \mathbb{R}^2$.  For convenience, we let $m$ be the slope when the line is not vertical and $\infty$ when the line is vertical.  

We now consider the sequence $\{((0,1/n), 0)\}_{n \in \mathbb{N}}$ of horizontal lines converging to $((0,0), 0)$ in $\mathbb{P}T_{(0,0)}\mathbb{R}^2$.  After applying the induced map $\bar{d}Q$ to this sequence,  the image will be the sequence $\{((0, 1/n), 1)\}_{n \in \mathbb{N}}$. But $\bar{d}Q$ is the identity on $((0,0), 0)$.  Since the sequence $\{((0, 1/n), 1)\}_{n \in \mathbb{N}}$ does not converge to $((0,0), 0)$, then $\bar{d}Q$ is not continuous.  
\end{proof}

\subsection*{Example $\mathbf{(iv)}$: Countably many non-differentiable points}

A natural question to consider is whether elements of $\thomeo(S)$ can have infinitely many non-differentiable points that accumulate on the surface.  The following result related to part $(iv)$ from Proposition~\ref{propexamples} gives an answer to this question. 

\begin{prop}\label{propex4}
There exists an element $P \in\thomeo \!\left(\mathbb{R}^2\right){\setminus}\diff^1\!\left(\mathbb{R}^2\right)$ such that $P$ is not differentiable at the points $\left(2^{-n}, 0 \right)$, for all $n \in \mathbb{N}$. 
\end{prop}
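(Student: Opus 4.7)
The plan is to build $P$ by superposing shrinking, compactly supported Le~Roux--Wolff perturbations, one at each point $(2^{-n}, 0)$. For each $n$, choose a smooth $g_n \colon \mathbb{R} \to \mathbb{R}^+$ as in Proposition~\ref{propex1}, with $g_n \equiv 1$ outside $[1/2, 2]$, $g_n(1) = 1 + \delta_n$ for some $\delta_n \neq 0$ satisfying $\delta_n \to 0$, and $\|g_n - 1\|_{C^1} = O(\delta_n)$. The associated map $G_n(x,y) = (x\, g_n(y/x),\, y\, g_n(y/x))$ lies in $\thomeo(\mathbb{R}^2) \setminus \diff^1(\mathbb{R}^2)$ by Proposition~\ref{propex1}, and its Jacobian off the origin differs from the identity by $O(\delta_n)$ entrywise. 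I would then radially damp $G_n$ outside a small radius to obtain a compactly supported variant $\tilde G_n$, supported in the closed unit disk, still non-differentiable at the origin, and with Jacobian $O(\delta_n)$-close to the identity off the origin.

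Next, rescale and translate: let $\hat G_n$ be the conjugate of $\tilde G_n$ by the affine map $q \mapsto r_n^{-1}(q - (2^{-n}, 0))$, with $r_n = 2^{-n-3}$ so that the closed disks $D_n$ of radius $r_n$ about $(2^{-n}, 0)$ are pairwise disjoint and miss the origin. Define
\[
P(q) = \begin{cases} \hat G_n(q), & q \in D_n, \\ q, & q \notin \textstyle\bigcup_n D_n. \end{cases}
\]
Since each $\hat G_n$ is the identity on $\partial D_n$ and outside $D_n$, $P$ is a well-defined homeomorphism, a $C^1$ diffeomorphism off $\{(0,0)\} \cup \{(2^{-n},0) : n \geq 1\}$, and non-differentiable at every $(2^{-n}, 0)$ (where it agrees with $\hat G_n$, an affine rescaling of the non-differentiable $\tilde G_n$).

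It then remains to verify $P \in \thomeo(\mathbb{R}^2)$ via the Main Theorem. Away from $(0,0)$, every point has a neighborhood on which $P$ agrees with either the identity or a single $\hat G_n$, so conditions $(a)$--$(c)$ follow from the corresponding facts for $\tilde G_n$. I expect the main obstacle to be the accumulation point $(0,0)$, where infinitely many perturbations pile up. The key estimates are that on $D_n$ the map $\hat G_n$ moves points by $O(r_n \delta_n)$ and rotates tangent directions by $O(\delta_n)$, both tending to $0$. A $C^1$ curve $\gamma$ through the origin with tangent $\ell$ traverses each $D_n$ in an arc of length $O(r_n)$ whose image stays in $D_n$, and the tangent line of $P \circ \gamma$ at each such image point differs from that of $\gamma$ by $O(\delta_n)$. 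Hence $P \circ \gamma$ is $C^1$ at the origin with tangent $\ell$, giving condition~$(a)$; $\bar{d}P_{(0,0)}$ is the identity, giving~$(b)$; and any transverse sequence is mapped to a transverse sequence since the perturbations vanish in the limit, giving~$(c)$. The identical analysis applied to $P^{-1}$, which has the same structure, completes the proof.
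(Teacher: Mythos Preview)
Your construction is correct and the estimates you outline do go through: with $g_n$ supported in $[1/2,2]$ and $\|g_n-1\|_{C^1}=O(\delta_n)$, the Jacobian of $G_n$ (and of its damped version $\tilde G_n$) is uniformly $O(\delta_n)$-close to the identity, so positions in $D_n$ move by $O(r_n\delta_n)$ and tangent lines rotate by $O(\delta_n)$, which is exactly what you need at the accumulation point. Your route, however, is genuinely different from the paper's. The paper does \emph{not} shrink the perturbation with $n$ and does \emph{not} invoke the Main Theorem. Instead it fixes a single local model $P_0$ that is the identity on the cone of slopes in $[-1,1]$ (and non-identity only on steep directions), then rescales that same $P_0$ at every $x_n=(2^{-n},0)$. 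The key observation is that any $C^1$ curve meeting infinitely many of the $x_n$ must have horizontal tangent at the origin, so by continuity of its tangent line it eventually lies inside the identity cone of each $\rho_n$; hence $P$ is literally the identity on the curve near the origin, and the $C^1$-preservation is immediate with no estimates. What each approach buys: the paper's argument is more elementary and entirely self-contained within Section~2, avoiding any forward reference to the Main Theorem; your approach is more flexible (it works for any compactly supported $\thomeo$-perturbation, not just cone-preserving ones), and as a bonus the $O(r_n\delta_n)$ displacement bound shows your $P$ is actually differentiable at the origin with $dP_{(0,0)}=\id$. A minor redundancy: once you have verified (a)--(c) for $P$, the Main Theorem already yields $P\in\thomeo(\mathbb{R}^2)$, so the separate analysis of $P^{-1}$ in your last sentence is unnecessary.
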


\begin{proof}
We start with Le Roux--Wolfe's \cite{LRW} example element from $\thomeo \!\left(\mathbb{R}^2\right){\setminus}\diff^1\!\left(\mathbb{R}^2\right)$: 
$$P_0(x,y)= \left\{ \begin{array}{ll} (x,x \, p_0(y/x)) & \mbox{if } x \neq 0 \\  (x, y) & \mbox{if } x =0 \end{array} \right.$$
 where $p_0: \mathbb{R}\rightarrow \mathbb{R}$ is a diffeomorphism such that $p_0(x) = x$ when $x$ is outside $[1/2, 2]$, but $p_0(1) \neq 1$.  Our goal is to make a function that has similar behavior at a sequence of points that converge to the origin, while becoming increasingly nicer as the points make the approach.  To that end, we define a sequence of functions
 $$p_k(x) = \frac{1}{2^k} p_0(x)+\left( 1- \frac{1}{2^k} \right)x$$ Note that this family approaches the identity as $k\rightarrow \infty.$ Let $\varphi_0 :\mathbb{R}^2 \rightarrow [0,1]$ be a smooth bump function such that $\varphi_0(x,y) = 1$ when $||(x,y)||<\frac1 2$ and $\varphi_0(x,y) = 0$ when $||(x,y)|| \geq 1$.  Similarly, define a sequence of functions 
$$\varphi_k(x,y) = \varphi_0\left(2^{k+2}(x-2^{-k}), 2^{k+2}y \right)$$
 We can now define the functions 
$$\rho_n(x, y) = \left(x, \left(x-2^{-n} \right)p_n\left( \frac{ y}{x-2^{-n}} \right) \varphi_n(x,y) + \left(1-\varphi_n(x, y) \right) y \right)$$ 

\begin{figure}[h]
\centering
\includegraphics[width=119mm]{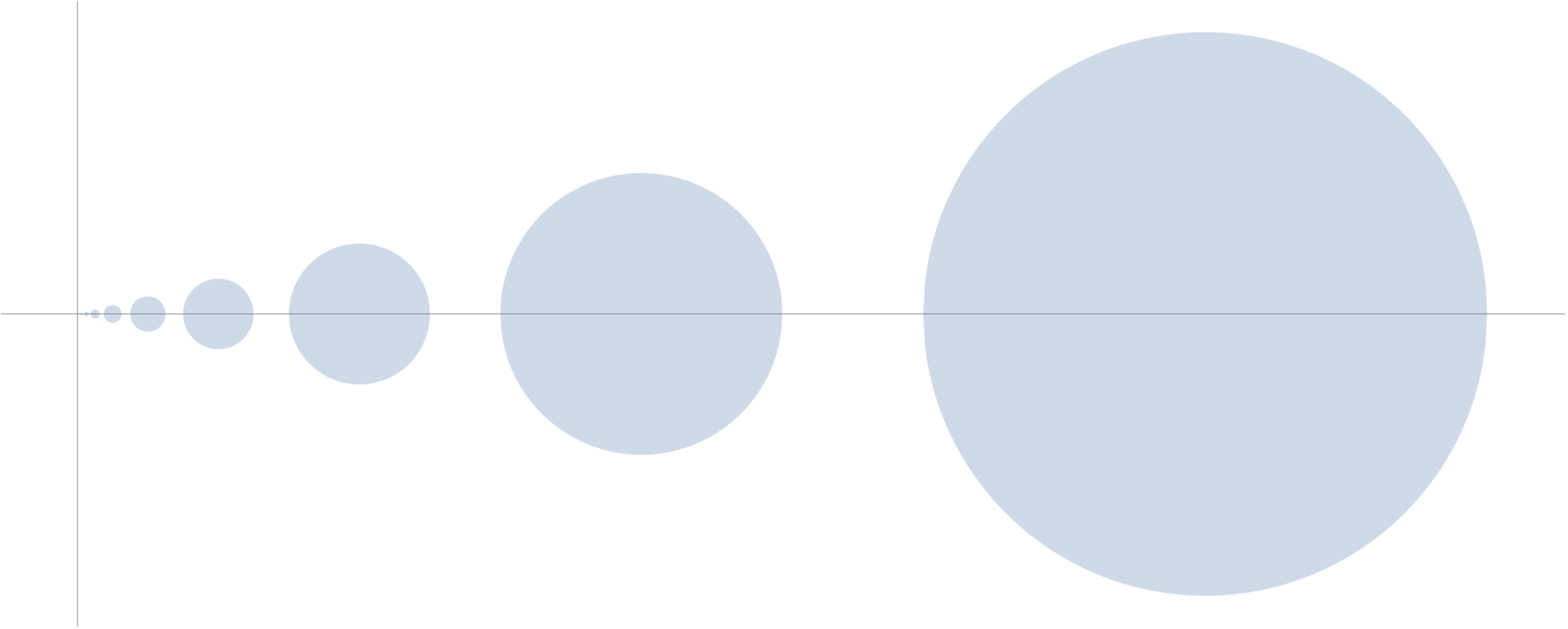}
\caption{The support of $P$}
\end{figure}

Note that each $\rho_n$ has a single non-differentiable point and support contained in the open ball of radius $2^{-(n+2)}$ around the point $x_n = (2^{-n},0)$, which means that these supports are all disjoint.  Define $P: \mathbb{R}^2 \rightarrow \mathbb{R}^2$ by 

$$P(x, y) = \left\{ \begin{array}{ll} \rho_n(x, y) & \mbox{when } (x, y) \in \supp(\rho_n) \\
(x, y) & \mbox{otherwise} \end{array} \right. \\[1em]$$

Any curve that is $C^1$ and passes through only finitely many of the $\supp(\rho_n)$ will continue to be $C^1$ after $P$ is applied since each of the non-differentiable points are isolated.   On the other hand,  it can be shown by direct computation that the image of any $C^1$ curve that passes through infinitely many of the $\supp(\rho_n)$ will still be a $C^1$ curve.  In particular, when the $p_n$ approach the identity,  the distortion on the curve also decreases as it approaches the origin, preserving the continuity of the tangent directions.  Thus $P$ maps every $C^1$ curve to a $C^1$ curve.

Since $P^{-1}$ can be described identically, then $P^{-1}$ also maps every $C^1$ curve to a $C^1$ curve. Thus $P$ is an element of $\thomeo \!\left(\mathbb{R}^2\right)$ that is not differentiable at the points  $(2^{-n},0)$.  
\end{proof}

In addition,  this construction also works when point $x_{\infty} = (0,0)$ is non-differentiable and has support on the region of $\mathbb{R}^2$ where $|y|>|x|$.   So the set of non-differentiable points for elements of $\thomeo(S)$ does not have to be a closed set nor finite.

\subsection{The proof of Proposition \ref{propexamples}}

\begin{proof}
We can add the non-differentiable points in $\mathbb{R}^2$ from the examples to an identity map using smooth local charts. 
More precisely,  let $\varphi_i: U_i \subset S \rightarrow \mathbb{R}^2$ for $i\in\{1, 2, 3, 4\}$ be smooth charts such that the $U_i$ are disjoint.
We define the map $f: S \rightarrow S$ by 
$$f(p) = \left\{ \begin{array}{ll} (\varphi_1^{-1} \circ G \circ \varphi_1)(p) & \mbox{ if } p \in U_1 \\
(\varphi_2^{-1} \circ H \circ \varphi_2)(p) & \mbox{ if } p \in U_2 \\
(\varphi_3^{-1} \circ Q \circ \varphi_3)(p) & \mbox{ if } p \in U_3 \\
(\varphi_4^{-1} \circ P \circ \varphi_4)(p) & \mbox{ if } p \in U_4 \\
p & \mbox{ otherwise}
\end{array} \right.$$

By Propositions \ref{propex1}-\ref{propex4},  $f \in \thomeo(S){\setminus}\diff^1(S)$ and has the desired properties.
\end{proof}

\section{Properties of elements of $\mathbf{\bthomeo(S)}$}\label{sectionforward}

The goal of this section is to prove the following proposition,  which is the forward direction of our Main Theorem.

\begin{prop}\label{propthm2forward}
For a smooth surface $S$, if $f \in \thomeo(S)$,  then $f$ has the following three properties:
\begin{enumerate}[noitemsep,topsep=0pt, label=$(\alph*)$]
\item $f$ maps every $C^1$ curve to a $C^1$ curve,
\item $\bar{d}f_p$ is a homeomorphism for all $p \in S$, and 
\item $f$ maps every transverse sequence to a transverse sequence.
\end{enumerate}
\end{prop}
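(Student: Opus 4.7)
The plan is to verify properties (a), (b), (c) in turn for any $f \in \thomeo(S)$. Property (a) is immediate from the definition of $\thomeo(S)$. For (b), I would first establish that $\bar{d}f_p$ is well-defined and bijective, then upgrade to a homeomorphism via a cyclic order argument on $\mathbb{P}T_pS \cong S^1$. For (c), I would work by contradiction: assuming a transverse sequence maps to a non-transverse sequence, I would construct a $C^1$ curve through the image points whose preimage under $f$ would violate transversality of the original.

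For well-definedness of $\bar{d}f_p$ in property (b), given two $C^1$ curves $\alpha, \beta$ through $p$ with common tangent line $\ell$, I would orient them so their tangent vectors at $p$ agree and form a single $C^1$ curve $\gamma$ by gluing the half of $\alpha$ past $p$ on one side to the half of $\beta$ past $p$ on the other; the curve $\gamma$ is $C^1$ because the tangent directions match. Applying $f$, the image $f(\gamma)$ is $C^1$, so it has a unique tangent line at $f(p)$, forcing the tangent lines of $f(\alpha)$ and $f(\beta)$ at $f(p)$ to coincide. The same construction applied to $f^{-1}$ yields a well-defined $\bar{d}(f^{-1})_{f(p)}$ serving as two-sided inverse to $\bar{d}f_p$. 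To get a homeomorphism, I would invoke the fact that any bijection of $S^1$ preserving or reversing the cyclic order of triples is a homeomorphism: given three distinct directions $\ell_1, \ell_2, \ell_3 \in \mathbb{P}T_pS$, short $C^1$ arcs through $p$ tangent to each partition a small disk around $p$ into six sectors whose cyclic arrangement is permuted by the local homeomorphism $f$ in an order-preserving or order-reversing fashion, giving the required cyclic-order property for $\bar{d}f_p$.

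For property (c), I would first check that $f(p_n) \to f(p)$ along $\bar{d}f_p(\ell)$ whenever $p_n \to p$ along $\ell$: given any subsequence of $\{p_n\}$, a $C^1$ interpolation (in a chart at $p$) yields a $C^1$ curve through $p$ tangent to $\ell$ containing a further subsequence of the $p_n$, and its image under $f$ is $C^1$ with tangent $\bar{d}f_p(\ell)$ at $f(p)$, so the corresponding $f(p_n)$ converge to $f(p)$ along $\bar{d}f_p(\ell)$; a standard subsubsequence argument then upgrades this to convergence of the whole image sequence. Now suppose, for contradiction, that $\{(f(p_n), \bar{d}f_{p_n}(k_n))\}$ fails to be transverse for $(f(p), m)$ with $m := \bar{d}f_p(\ell)$; this forces some subsequence of the image to converge to $(f(p), m)$ in $\mathbb{P}TS$. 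Along this subsequence I would construct a single $C^1$ curve $\eta$ through $f(p)$ tangent to $m$ and passing through each selected $f(p_n)$ with the prescribed tangent $\bar{d}f_{p_n}(k_n)$, which is possible because both the base points and the attached tangent lines converge to $(f(p), m)$. Applying $f^{-1} \in \thomeo(S)$, the preimage $f^{-1}(\eta)$ is $C^1$, passes through $p$ tangent to $\ell$, and through each corresponding $p_n$ with tangent $k_n$; continuity of tangent lines on a $C^1$ curve then forces $(p_n, k_n) \to (p, \ell)$ along this subsequence, contradicting transversality of the original sequence.

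The main obstacle is this last interpolation step: constructing a single $C^1$ curve that passes through an infinite sequence of specified points with specified tangent lines at each, while also matching a prescribed tangent at the accumulation point. Because both the base points and the prescribed tangent data converge to the limiting tangent, one needs a careful local construction (for instance, Hermite-type splines between consecutive nodes with derivatives chosen to converge to the prescribed limit) to guarantee genuine $C^1$-regularity at the accumulation point. A secondary but easier interpolation underlies the ``convergence along a line is realized by some interpolating $C^1$ curve'' step used to transport convergence-along-a-line across $f$.
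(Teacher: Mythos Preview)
Your overall architecture matches the paper's: (a) is immediate, (b) proceeds via well-definedness $\to$ bijection $\to$ cyclic-order homeomorphism on $S^1$, and (c) is a contradiction argument hinging on a $C^1$ interpolation lemma through prescribed points with prescribed tangent lines (the paper's Lemma~\ref{c1throughtangents}/Corollary~\ref{c1curvewithtangentsonS}, proved by an explicit spline-type construction much as you suggest). Two local choices differ and are worth flagging. First, for well-definedness of $\bar{d}f_p$ the paper does a case split on the intersection type at $p$ (one-sided tangency, topologically transverse tangency, accumulation point of intersections), whereas your gluing of half-arcs is more direct; note however that in this paper a ``$C^1$ curve'' means a \emph{simple closed} $C^1$ curve, so your glued object is a priori only an arc and must be shortened near $p$ and then closed off to a simple closed curve before the hypothesis on $f$ applies---routine, but it should be said. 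Second, to show that ``$p_n \to p$ along $\ell$'' is preserved by $f$, the paper uses a sector characterization (Lemma~\ref{convergealongline}): the $p_n$ eventually lie in the wedge between any two $C^1$ curves through $p$ with tangent angles just above and just below $\theta_{\ell}$, and this wedge picture transports under the homeomorphism $f$ together with the cyclic-order-preserving $\bar{d}f_p$. Your interpolation-plus-subsubsequence argument is a valid alternative that reuses the same interpolation machinery already needed for the main contradiction, at the cost of invoking that lemma twice rather than once.
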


We accomplish this proof in three steps. First, we show that property $(a)$ implies that $\bar{d}f_p$ is well-defined.  We then obtain property $(b)$,  that this induced map is a homeomorphism. Finally, we prove property $(c)$,  that transverse sequences are preserved.

\subsection{A well-defined map of projective tangent space}
In this first section,  to get that $\bar{d}f_p$ is well-defined, we prove the following result:

\begin{lemma}\label{autfinpreservetangent}
Let $f \in \thomeo(S)$.  Let $\alpha$ and $\beta$ be simple closed $C^1$ curves in $S$ and $p \in \alpha \cap \beta$.  If $\alpha$ and $\beta$ have the same tangent line at $p$ then $f(\alpha)$ and $f(\beta)$ have the same tangent line at $f(p)$.
\end{lemma}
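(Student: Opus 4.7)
The plan is to argue by contradiction: assume $f(\alpha)$ and $f(\beta)$ have distinct tangent lines at $f(p)$ (both exist because $f(\alpha)$ and $f(\beta)$ are $C^1$ by property~$(a)$), and construct a simple closed $C^1$ curve $\gamma$ through $p$ whose image $f(\gamma)$ must be $C^1$ but whose local behavior near $f(p)$ forces the two tangent lines to agree. Concretely, $\gamma$ will be built to coincide with $\alpha$ on a one-sided neighborhood of $p$ and with $\beta$ on the other.

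To build $\gamma$, I first work in a smooth chart around $p$ sending $p$ to the origin and the common tangent line $\ell$ to the $x$-axis. Since both $\alpha$ and $\beta$ are $C^1$ and tangent to $\ell$, on some interval $(-\delta,\delta)$ they are graphs $y=a(x)$ and $y=b(x)$ with $a(0)=b(0)=0$ and $a'(0)=b'(0)=0$. Define the piecewise function $c(x)=a(x)$ for $x\ge 0$ and $c(x)=b(x)$ for $x\in[-\delta/2,0]$; since $a$ and $b$ agree to first order at $0$, this is $C^1$ at $0$. To upgrade the graph of $c$ into a simple closed curve in $S$, I perform a surgery on $\alpha$: replace the subarc $\{(x,a(x)):x\in[-\delta/2,0]\}$ with $\{(x,b(x)):x\in[-\delta/2,0]\}$, and reconnect to the rest of $\alpha$ on $x\in[-\delta,-\delta/2]$ via the interpolation $(1-\phi(x))a(x)+\phi(x)b(x)$, where $\phi$ is $C^1$ with $\phi\equiv 0$ near $-\delta$ and $\phi\equiv 1$ near $-\delta/2$. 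Taking $\delta$ small enough, the spliced arc stays inside a thin tube around the original subarc of $\alpha$, which is disjoint from $\alpha$ outside the chart, so $\gamma$ is a simple closed $C^1$ curve in $S$.

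The contradiction then follows quickly. By property~$(a)$, $f(\gamma)$ is a $C^1$ simple closed curve and therefore has a well-defined tangent line at $f(p)$. Because $f$ is a homeomorphism, in some neighborhood $W$ of $f(p)$ the set $f(\gamma)\cap W$ consists of an arc of $f(\alpha)$ approaching $f(p)$ from one side together with an arc of $f(\beta)$ approaching from the other. Hence the two one-sided tangents of $f(\gamma)$ at $f(p)$ equal the tangents of $f(\alpha)$ and $f(\beta)$ at $f(p)$ respectively, and for a $C^1$ curve these must agree, giving the contradiction.

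The main obstacle is the surgery step: verifying that $\gamma$ is a genuine simple closed $C^1$ submanifold rather than merely a $C^1$ arc or an immersed loop with self-intersections. For $\delta$ small enough this reduces to standard tubular-neighborhood arguments, but the necessary smallness depends on how close the graphs of $a$ and $b$ lie to each other on $[-\delta,-\delta/2]$ and must be explicitly verified. A minor side case is when $\alpha$ and $\beta$ happen to agree on a one-sided neighborhood of $p$, in which case the desired tangent equality on that side is immediate and the surgery only needs to be applied on the other.
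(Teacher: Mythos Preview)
Your argument is correct and takes a genuinely different route from the paper's proof. The paper proceeds by a trichotomy on the local intersection type at $p$: isolated one-sided intersections (which are preserved by any homeomorphism, and two $C^1$ curves meeting one-sidedly are automatically tangent), isolated topologically transverse tangencies (handled by constructing auxiliary curves $\gamma,\delta$ that are one-sided tangent to both $\alpha$ and $\beta$ and then applying the one-sided case), and non-isolated intersections (where a sequence of intersection points converging to $p$ forces the tangent lines of the images to agree via Corollary~\ref{c1infinitesharetangentline}). Your splicing construction bypasses this case analysis entirely: by building a single $C^1$ curve that coincides with $\alpha$ on one side of $p$ and with $\beta$ on the other, you reduce everything to the fact that the $C^1$ image $f(\gamma)$ has a single tangent line at $f(p)$. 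This is more direct and avoids the auxiliary one-sided curves, at the modest cost of the simplicity/tubular-neighborhood check you already identify. The paper's approach, in turn, isolates one-sided tangency as a purely topological invariant of $C^1$ curve pairs and makes visible the infinite-intersection lemma (Lemma~\ref{c1infiniteintersectionssharetangent} and Corollary~\ref{c1infinitesharetangentline}), which it reuses later in the proof of Lemma~\ref{inversetangentline}.
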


Before we prove the Lemma~\ref{autfinpreservetangent}  we need to discuss intersections of $C^1$ curves and prove relevant lemmas for each type of intersection.

\medskip

\noindent \textit{Characterizing intersections of $C^1$ curves.}  Any intersection between two $C^1$ curves can either be an isolated point or a limit point for a sequence of intersection points.  In the isolated case,  the intersection can be either transverse or tangent.  Further, for a tangent intersection point, the curves can either locally cross each other or stay on the same side that they started.   When the curves stay on the same side, we call this a \emph{one-sided} intersection.  The intersections that are either transverse or cross at the tangency are referred to as \emph{topologically transverse} intersections.

The next few lemmas show that tangencies are preserved by elements of $\thomeo(S)$ in all three of these cases: Lemma~\ref{onesidedtangentline} proves the result for one-sided tangents,  Lemma~\ref{crosstangentline} for topologically transverse tangents, and Corollary~\ref{c1infinitesharetangentline} for limit points of intersection points.

\begin{lemma} \label{onesidedtangentline}
Let $f \in \thomeo(S)$.  Let $\alpha$ and $\beta$ be simple closed $C^1$ curves in $S$ that intersect at an isolated point $p$ with a one-sided intersection. Then $f(\alpha)$ and $f(\beta)$ have the same tangent line at $f(p)$.
\end{lemma}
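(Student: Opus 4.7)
The plan is to use that one-sidedness at an isolated intersection point is a purely topological condition --- namely, that two arcs meet without crossing --- and this condition is preserved by any homeomorphism. Combined with the local $C^1$ fact that distinct tangent lines at a shared point force a topological crossing, the lemma follows by contrapositive.

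The key local ingredient I would record first is: if two $C^1$ arcs $\alpha'$ and $\beta'$ meet at a point $q$ with distinct tangent lines, then they cross topologically at $q$, in the sense that every sufficiently small disk neighborhood of $q$ is separated by $\alpha'$ into two components, each containing one of the two local half-arcs of $\beta'$. In a local chart adapted so that $\alpha'$ has horizontal tangent at $q$, I would write $\alpha'$ as a $C^1$ graph $y = A(x)$ with $A(0) = A'(0) = 0$; a short Taylor expansion then shows that any other $C^1$ arc through $q$ whose tangent has nonzero (possibly infinite) slope moves from $\{y > A(x)\}$ to $\{y < A(x)\}$ as it passes through $q$, with the vertical-tangent case handled by the analogous graph over the $y$-axis.

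With this in hand, the main argument runs as follows. Using one-sidedness at the isolated point $p$, I choose a small disk neighborhood $U$ of $p$ so that $\alpha \cap U$ and $\beta \cap U$ are simple arcs through $p$ and $\beta \cap (U \setminus \{p\})$ lies in a single component of $U \setminus \alpha$. Because $f$ is a homeomorphism of $S$, the image $f(U)$ is an open disk neighborhood of $f(p)$, the sets $f(\alpha) \cap f(U)$ and $f(\beta) \cap f(U)$ are simple arcs through $f(p)$, and $f(\beta) \cap (f(U) \setminus \{f(p)\})$ still lies in a single component of $f(U) \setminus f(\alpha)$. In particular, $f(\alpha)$ and $f(\beta)$ do not cross topologically at $f(p)$. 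Since $f(\alpha)$ and $f(\beta)$ are $C^1$ curves --- this is property $(a)$, immediate from $f \in \thomeo(S)$ --- the contrapositive of the local observation forces their tangent lines at $f(p)$ to coincide, as desired.

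The main obstacle I anticipate is the local transversality-implies-crossing claim, especially handling the vertical-tangent case cleanly, and also confirming that the ``lies in a single component'' picture transports verbatim under $f$ (which comes down to the fact that a simple arc through an interior point of a disk separates the disk into exactly two components, a statement naturally preserved by homeomorphisms). Both points are elementary but will require explicit justification.
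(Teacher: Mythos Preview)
Your proposal is correct and follows essentially the same approach as the paper: both argue that one-sidedness is a topological property preserved by the homeomorphism $f$, that $f(\alpha)$ and $f(\beta)$ are $C^1$, and that two $C^1$ curves with distinct tangent lines at a point must cross there, so by contrapositive the images share a tangent line. Your version is simply more explicit about the local ``distinct tangents forces a topological crossing'' step, which the paper asserts without the Taylor-expansion justification.
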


\begin{proof}
Let $\alpha$ and $\beta$ be $C^1$ curves that intersect at a point $p \in S$ in a one-sided intersection.  We note that $\alpha$ and $\beta$ must be tangent at $p$.  If not, there would exist parameterizations of $\alpha$ and $\beta$ such that the tangent vectors at $p$ are not scalar multiples of each other. But then $\alpha$ and $\beta$ would be transverse at $p$.  Thus the intersection at $p$ would also be topologically transverse and not one-sided. 

Since $f$ is a homeomorphism,  then it will preserve one-sided intersections.  Furthermore,  since $f$ maps $C^1$ curves to $C^1$ curves, then $f(\alpha)$ and $f(\beta)$ are $C^1$ curves with a one-sided intersection at $f(p)$.  Thus $f(\alpha)$ and $f(\beta)$ have the same tangent line at $f(p)$.
\end{proof}

\begin{lemma} \label{crosstangentline}
Let $f \in \thomeo(S)$.  Let $\alpha$ and $\beta$ be simple closed $C^1$ curves in $S$ that intersect tangentially at an isolated point $p$ with a topologically transverse intersection. Then $f(\alpha)$ and $f(\beta)$ have the same tangent line at $f(p)$.
\end{lemma}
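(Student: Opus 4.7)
The plan is to reduce this to Lemma~\ref{onesidedtangentline} by producing a third simple closed $C^1$ curve $\gamma$ through $p$ that has an isolated one-sided tangent intersection at $p$ with \emph{both} $\alpha$ and $\beta$. Applying Lemma~\ref{onesidedtangentline} to the pairs $(\gamma, \alpha)$ and $(\gamma, \beta)$ then forces $f(\gamma)$ to share a tangent line at $f(p)$ with each of $f(\alpha)$ and $f(\beta)$, so $f(\alpha)$ and $f(\beta)$ must share a tangent line at $f(p)$.

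To construct $\gamma$, I would pick a smooth chart around $p$ sending $p$ to the origin and the common tangent line of $\alpha$ and $\beta$ to the $x$-axis. In this chart, $\alpha$ and $\beta$ are locally graphs $y = a(x)$ and $y = b(x)$ with $a(0) = b(0) = 0$ and $a'(0) = b'(0) = 0$. Because the intersection at $p$ is topologically transverse, $a - b$ changes sign at $0$, and after relabeling we may assume $a(x) > b(x)$ for $x \in (0, \epsilon)$ and $a(x) < b(x)$ for $x \in (-\epsilon, 0)$. I would then take $\gamma$, near $p$, to be the graph $\{y = c(x)\}$ of a $C^1$ function $c$ with $c(0) = c'(0) = 0$ and $c(x) > \max(a(x), b(x))$ for $0 < |x| < \epsilon$, extending this arc smoothly outside the chart to a simple closed $C^1$ curve in $S$ which does not re-enter a small neighborhood of $p$. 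Such a $\gamma$ is tangent to both $\alpha$ and $\beta$ at $p$, stays strictly on one side of each locally, and meets each only at $p$ near $p$, so $p$ is an isolated one-sided tangent intersection of $\gamma$ with each of $\alpha$ and $\beta$.

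The main obstacle is producing the dominating function $c$, since $a$ and $b$ are only assumed $C^1$ and their magnitudes can decay like an arbitrary $o(|x|)$ function rather than being bounded by any fixed polynomial. I would handle this by a standard majorization argument: let $\mu(x) = \max(a(x), b(x), 0)$, which is continuous, non-negative, and satisfies $\mu(x)/|x| \to 0$ as $x \to 0$; form the continuous, non-decreasing envelope $\sigma(r) = 2 \sup_{0 < |t| \le r} \mu(t)/|t|$ with $\sigma(0) = 0$; and set
\[
c(x) = \int_0^{2|x|} \sigma(u) \, du + x^2.
\]
The fundamental theorem of calculus, together with continuity of $\sigma$ and $\sigma(0) = 0$, yields $c \in C^1$ with $c(0) = c'(0) = 0$, while monotonicity of $\sigma$ gives $c(x) \ge |x|\,\sigma(|x|) \ge 2\mu(x)$, which is strictly greater than $\max(a(x), b(x))$ for $x \neq 0$. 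Once $c$ is in hand, the rest of the argument is the immediate application of Lemma~\ref{onesidedtangentline} described above.
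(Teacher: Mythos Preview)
Your proof is correct and follows essentially the same strategy as the paper: reduce the topologically-transverse tangent case to Lemma~\ref{onesidedtangentline} by manufacturing an auxiliary $C^1$ curve through $p$ that is one-sided tangent to both $\alpha$ and $\beta$, then apply Lemma~\ref{onesidedtangentline} twice. The only real difference is in how the auxiliary curve is built. The paper constructs two such curves $\gamma,\delta$ somewhat informally, by ``following either $\alpha$ or $\beta$ and switching to the other curve at $p$'' and closing up on the correct side; you instead give a single explicit analytic construction by writing $\alpha$ and $\beta$ locally as graphs over the common tangent direction and producing a $C^1$ majorant $c$ via the envelope $\sigma$ and an integral. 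Your version supplies more detail at exactly the point the paper leaves to the reader (why a strictly-dominating $C^1$ graph with $c(0)=c'(0)=0$ exists when $a,b$ are only $C^1$), at the cost of a short computation; the paper's version is quicker to state but relies on the reader filling in the geometric picture. Either route yields the lemma.
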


\begin{proof}
Let $\alpha$ and $\beta$ be simple closed $C^1$ curves in $S$ with $p \in \alpha \cap \beta$ such that $\alpha$ and $\beta$ have the same tangent line at $p$.  If $\alpha$ and $\beta$ are topologically transverse, then there exist simple closed inessential $C^1$ curves $\gamma$ and $\delta$ such that each pair $\{\alpha, \gamma\}$, $\{\alpha, \delta\}$, $\{\beta, \gamma\}$,  $\{\beta, \delta\}$, $\{\gamma,\delta\}$ intersects at $p$ with a one-sided intersection.  This can be accomplished by following either $\alpha$ or $\beta$ and switching to the other curve at $p$,  and then closing the curve without crossing either $\alpha$ or $\beta$.  
\begin{figure}[h]
\centering
\includegraphics[width=119mm]{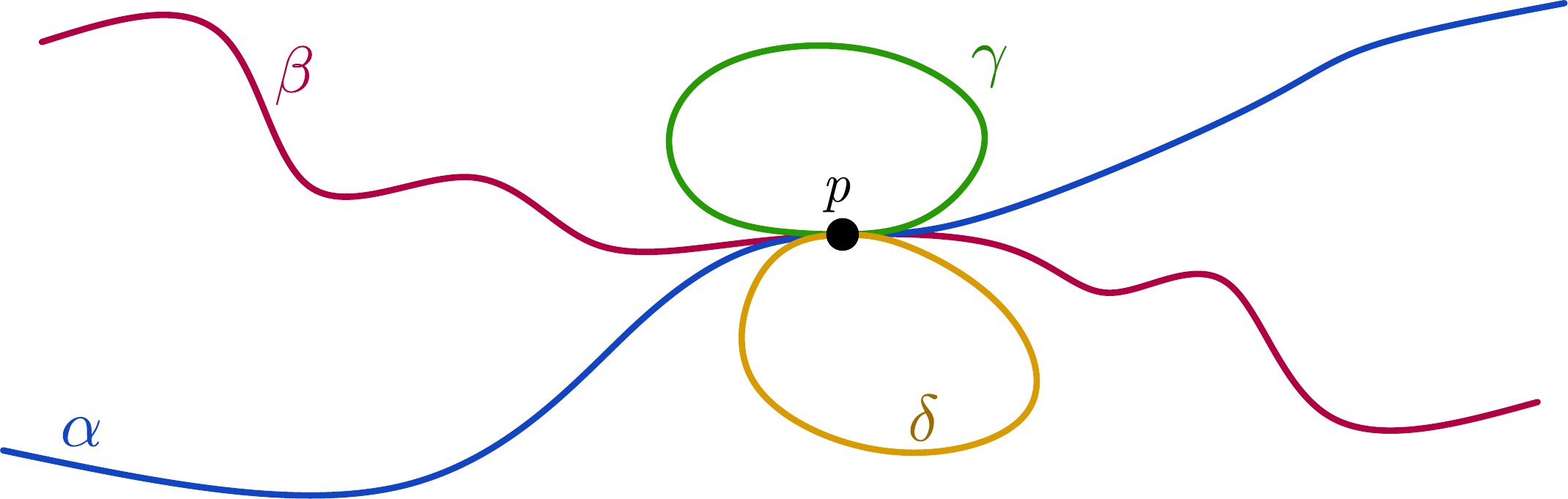}
\caption{Two topologically transverse $C^1$ curves $\alpha$ and $\beta$ at the point $p$ with the one-sided tangent curves $\gamma$ and $\delta$}
\end{figure}

Thus by Lemma \ref{onesidedtangentline},  $f(\alpha), f(\beta), f(\gamma), $ and $f(\delta)$ must all have the same tangent line at $f(p)$. 
\end{proof}

Now that we have both isolated point cases, we move to the case of a limit point of intersection points.  To accomplish this, we first give a result about $C^1$ curves that intersect infinitely many times in $\mathbb{R}^2$.

\begin{lemma}\label{c1infiniteintersectionssharetangent}
If $\alpha$ and $\beta$ are two simple $C^1$ curves in $\mathbb{R}^2$ that intersect at infinitely many distinct points $x_n$ that converge to the origin, then there are parameterizations of $\alpha(t)$ and $\beta(t)$ of $\alpha$ and $\beta$ such that $\alpha'(0) = \beta'(0) \neq 0$.
\end{lemma}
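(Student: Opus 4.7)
The plan is to extract the tangent vectors at the origin directly from $C^1$ parameterizations and use the fact that a $C^1$ curve looks like its tangent line to leading order. First, I would use the equivalent definition from the introduction to choose $C^1$ parameterizations $\alpha: I \to \mathbb{R}^2$ and $\beta: J \to \mathbb{R}^2$ with $\alpha'(t) \neq 0$ and $\beta'(s) \neq 0$ everywhere. Since each $x_n$ lies on both $\alpha$ and $\beta$, since $x_n \to 0$, and since the images of $\alpha$ and $\beta$ are closed subsets of $\mathbb{R}^2$, the origin belongs to both curves. After shifting parameters, assume $\alpha(0) = \beta(0) = 0$.

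Next I would locate the intersection points in parameter space. Because $\alpha$ is a simple curve with nonvanishing $C^1$ derivative, it is a local homeomorphism near $t = 0$, so for all large $n$ there is a unique $t_n$ near $0$ with $\alpha(t_n) = x_n$, and $t_n \to 0$ since $\alpha^{-1}$ is continuous on the image. The distinctness of the $x_n$ forces $t_n \neq 0$. Similarly choose $s_n \to 0$ with $s_n \neq 0$ and $\beta(s_n) = x_n$. Passing to a subsequence, I may assume the signs of $t_n$ and $s_n$ are constant, say $\varepsilon_1, \varepsilon_2 \in \{\pm 1\}$.

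The $C^1$ hypothesis now gives Taylor expansions
\[
x_n \;=\; \alpha(t_n) \;=\; t_n\,\alpha'(0) + o(t_n), \qquad x_n \;=\; \beta(s_n) \;=\; s_n\,\beta'(0) + o(s_n),
\]
so that $x_n/|t_n| \to \varepsilon_1 \alpha'(0)$ and $x_n/|s_n| \to \varepsilon_2 \beta'(0)$. Since $|t_n|$ and $|s_n|$ are positive scalars and the sequence $x_n/\|x_n\|$ is common to both computations (up to positive rescaling), the limits $\varepsilon_1\alpha'(0)$ and $\varepsilon_2 \beta'(0)$ must be positive scalar multiples of one another. In particular $\beta'(0) = c\,\alpha'(0)$ for some $c \neq 0$.

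Finally, I would reparameterize $\beta$ by the linear change $\tilde\beta(s) := \beta(s/c)$. This is still a $C^1$ simple parameterization of the same image, satisfies $\tilde\beta(0) = 0$, and has $\tilde\beta'(0) = (1/c)\beta'(0) = \alpha'(0) \neq 0$, which is exactly the claim. The only delicate step is ensuring that $t_n \to 0$ and $s_n \to 0$ (rather than merely that $\alpha(t_n) \to 0$ along some stray branch), which is where the simplicity of the curves and the nonvanishing derivative are used; the sign bookkeeping and the rescaling at the end are then routine.
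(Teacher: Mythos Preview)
Your proposal is correct and follows essentially the same approach as the paper's own proof: choose $C^1$ parameterizations, locate parameter values $t_n, s_n$ for the intersection points, use the first-order expansion $x_n \approx t_n\,\alpha'(0)$ (the paper writes this as $\alpha'(0) = \lim x_{n_k}/t_k$) to conclude that $\alpha'(0)$ and $\beta'(0)$ point along the same line, and then reparameterize $\beta$ linearly. Your treatment is in fact slightly more careful than the paper's in explaining why $t_n \to 0$ and in handling the signs via the $x_n/\|x_n\|$ device.
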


\begin{proof}
By continuity,  the origin must be a point on $\alpha$.  Let $\alpha(t): \mathbb{R} \rightarrow \mathbb{R}^2$ be a $C^1$ parameterization of $\alpha$  such that $\alpha(0)=(0, 0)$ and $\alpha'(t) \neq 0$.  Up to replacing $t$ by $-t$, we can assume that infinitely many of the $x_n$ occur at positive $t$ values.  Let $x_{n_k}$ be the subsequence of of the $x_n$ that have positive $t$ values on $\alpha(t)$.  Similarly,  there is a $C^1$ parameterization $\beta(s): \mathbb{R} \rightarrow \mathbb{R}^2$ of $\beta$ with $\beta(0)=0$ and $\beta'(s) \neq 0$. Let $t_k$  and $s_k$ be sequences such that $\alpha(t_k) = x_{n_{k}} = \beta(s_k)$.  

Since $\alpha(t)$ and $\beta(s)$ are $C^1$ parameterizations and $t_m \rightarrow 0$ and $s_m \rightarrow 0$, then 
\begin{center}
\begin{minipage}{2.5in}
\begin{flalign*}
\alpha'(0) & = \lim_{t \rightarrow 0} \frac {\alpha(t)}{t} \\
& =  \lim_{k \rightarrow \infty} \frac{\alpha(t_k)}{t_k} \\
& =  \lim_{k \rightarrow \infty} \frac{x_{n_{k}}}{t_k} \\
\end{flalign*} \end{minipage} 
\begin{minipage}{2.5in}
\begin{flalign*}
\beta'(0) & = \lim_{s \rightarrow 0} \frac {\beta(s)}{s} \\
& =  \lim_{k \rightarrow \infty} \frac{\beta(s_k)}{s_k} \\
& =  \lim_{k \rightarrow \infty} \frac{x_{n_{k}}}{s_k} \\
\end{flalign*} \end{minipage}
\end{center}
Thus $\alpha'(0)$ and $\beta'(0)$ must lie on the same line, which is the direction that the $x_{n_{k}}$ approach $0$. Since both $\alpha'(0)$ and $\beta'(0)$ are nonzero, then 
$$\frac{\alpha'(0)}{\beta'(0)} = \lim_{k \rightarrow \infty} \frac{s_k}{t_k} = C \in \mathbb{R} \setminus \{0\}$$ Thus $\beta$ can be reparameterized by $s=Ct$ so that $\beta'(0) = \alpha'(0)$. 
\end{proof}

We can directly apply Lemma~\ref{c1infiniteintersectionssharetangent} to surfaces by using a smooth local chart to get the following:

\begin{cor}\label{c1infinitesharetangentline}
If two simple closed $C^1$ curves $\alpha$ and $\beta$ in $S$ intersect at infinitely many distinct points $p_n$ that converge to $p$, then they have the same tangent line at $p$. 
\end{cor}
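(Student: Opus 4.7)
The plan is to reduce the statement directly to Lemma~\ref{c1infiniteintersectionssharetangent} via a smooth local chart, since the author's sentence preceding the corollary already indicates this is the intended route. First I would fix a smooth coordinate chart $\varphi\colon U \to \mathbb{R}^2$ around $p$ with $\varphi(p) = (0,0)$. After discarding finitely many terms, we may assume that every $p_n$ lies in $U$, and then $\varphi(p_n) \to (0,0)$ in $\mathbb{R}^2$.

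Next, by shrinking $U$ if necessary, I would arrange that $\alpha \cap U$ and $\beta \cap U$ are each a single connected $C^1$ arc through $p$, which is possible because $\alpha$ and $\beta$ are properly embedded one-dimensional $C^1$ submanifolds. Set $\alpha^{\ast} = \varphi(\alpha \cap U)$ and $\beta^{\ast} = \varphi(\beta \cap U)$. Since $\varphi$ is a smooth diffeomorphism onto its image, $\alpha^{\ast}$ and $\beta^{\ast}$ are simple $C^1$ curves in $\mathbb{R}^2$, and they meet at the infinitely many distinct points $\varphi(p_n)$, which converge to the origin.

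Now Lemma~\ref{c1infiniteintersectionssharetangent} applies directly to $\alpha^{\ast}$ and $\beta^{\ast}$, producing $C^1$ parameterizations whose nonzero velocity vectors at the origin are equal. In particular, $\alpha^{\ast}$ and $\beta^{\ast}$ share a tangent line at the origin. To transfer this conclusion back to $S$, I would use that $d\varphi_p\colon T_pS \to T_{(0,0)}\mathbb{R}^2$ is a linear isomorphism that carries the tangent line of $\alpha$ at $p$ to the tangent line of $\alpha^{\ast}$ at the origin, and similarly for $\beta$. Since the images coincide and $d\varphi_p$ is a bijection on projective tangent spaces, the tangent lines of $\alpha$ and $\beta$ at $p$ must coincide as well.

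There is no real obstacle here: the work has all been done in the planar lemma, and the only substantive step is checking that one can shrink $U$ so that $\alpha \cap U$ and $\beta \cap U$ are $C^1$ arcs, which follows immediately from the fact that $\alpha$ and $\beta$ are $C^1$ submanifolds. If anything warrants care, it is making sure we pick a chart $\varphi$ that is a smooth diffeomorphism (not merely $C^1$), so that the images $\alpha^{\ast}$, $\beta^{\ast}$ are genuinely $C^1$ curves and $d\varphi_p$ is a well-defined linear isomorphism between tangent spaces.
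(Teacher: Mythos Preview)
Your argument is correct and is exactly the route the paper indicates: the corollary is stated immediately after Lemma~\ref{c1infiniteintersectionssharetangent} with only the remark that one applies a smooth local chart, and your proof carries this out in detail. There is nothing to add.
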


Since we have dealt with each case individually, we can now complete this section.

\begin{proof}[Proof of Lemma \ref{autfinpreservetangent}]
Let $\alpha$ and $\beta$ be $C^1$ curves in $S$ with the same tangent line at $p \in \alpha \cap \beta$.  If $p$ is an isolated intersection point with a one-sided intersection, then $f(\alpha)$ and $f(\beta)$ also have a one-sided intersection. So by Lemma \ref{onesidedtangentline},  $f(\alpha)$ and $f(\beta)$ have the same tangent line at $f(p)$.  
If $p$ is an isolated intersection point with a topologically transverse intersection, then $f(\alpha)$ and $f(\beta)$ have the same tangent line at $f(p)$ by Lemma~\ref{crosstangentline}. 

If $p$ is not an isolated intersection point, then there is a sequence of intersection points $\{p_n\} \subset \alpha \cap \beta$ that converge to $p$.  Since $f$ is a homeomorphism, then $f(p_n)$ is a sequence of distinct intersection points of $f(\alpha)$ and $f(\beta)$ that converge to $f(p)$. Thus by Corollary~\ref{c1infinitesharetangentline},  $f(\alpha)$ and $f(\beta)$ must have the same tangent line at $f(p)$. 
\end{proof}

\subsection{Property $\mathbf{(b)}$: Homeomorphism on $\mathbf{\mathbb{P}T_pS}$}
By Lemma~\ref{autfinpreservetangent},  for any $f \in \homeo(S)$ that maps every $C^1$ curve to a $C^1$ curve,  $f$ naturally induces a map
$$\bar{d}f_p: \mathbb{P}T_pS \rightarrow \mathbb{P}T_{f(p)}S$$
between the projective tangent spaces.  The goal of this section is to prove the following lemma related to property $(b)$ from our Main Theorem.

\begin{lemma}\label{projtangentspaceishomeo}
Let $f \in \thomeo(S)$.  Then for all $p \in S$,  the induced map $\bar{d}f_p$ is a homeomorphism.
\end{lemma}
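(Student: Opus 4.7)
The plan is to identify $\mathbb{P}T_pS$ with $S^1$ and invoke the standard fact that any bijection of $S^1$ preserving (or reversing) cyclic order on every triple is a homeomorphism. First, I would establish that $\bar{d}f_p$ is a bijection: since $\thomeo(S)$ is a group, $f^{-1} \in \thomeo(S)$ as well, so Lemma \ref{autfinpreservetangent} applied to $f^{-1}$ produces a well-defined induced map $\bar{d}(f^{-1})_{f(p)}: \mathbb{P}T_{f(p)}S \rightarrow \mathbb{P}T_pS$. By construction these two maps are mutual inverses, so $\bar{d}f_p$ is bijective.

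The main step is to show $\bar{d}f_p$ preserves (or reverses) the cyclic order on $\mathbb{P}T_pS$. Given three distinct lines $\ell_1, \ell_2, \ell_3 \in \mathbb{P}T_pS$, I would choose simple closed $C^1$ curves $\alpha_1, \alpha_2, \alpha_3$ through $p$ whose tangent lines at $p$ are $\ell_1, \ell_2, \ell_3$ respectively. Working in a smooth chart around $p$ and shrinking to a small enough disk $D$, one can arrange that each $\alpha_i \cap D$ is an embedded $C^1$ arc crossing $\partial D$ transversely in exactly two points, and that the three arcs meet inside $D$ only at $p$; the cyclic order of the six endpoints on $\partial D$ then faithfully records the cyclic order of $\ell_1, \ell_2, \ell_3$. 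Since $f$ is a homeomorphism of $S$, it sends this planar arrangement to the analogous arrangement of the $f(\alpha_i)$ about $f(p)$. Intersecting with a small disk around $f(p)$ contained in $f(D)$, the cyclic order of the endpoints (hence of the tangent images $\bar{d}f_p(\ell_i)$) either matches or uniformly reverses that of $\ell_1, \ell_2, \ell_3$, according to whether $f$ locally preserves or reverses orientation at $p$.

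Combining bijectivity with preservation (or reversal) of cyclic order on all triples, the general fact about $S^1$ then gives that $\bar{d}f_p$ is a homeomorphism. The hard part will be justifying that for small enough $D$ the arrangement of the three arcs is genuinely controlled by the tangent directions, so that the cyclic order of $\ell_1, \ell_2, \ell_3$ is recovered from the cyclic order of endpoints on $\partial D$. A careful argument straightens one of the curves using a chart adapted to its tangent direction and then applies the implicit function theorem to express the other two as $C^1$ graphs over that direction near $p$ with prescribed slopes; the cyclic arrangement at $\partial D$ then follows from direct slope comparison. A secondary point to check is that the orientation sense (preserving versus reversing) is the same across all triples at a fixed $p$, which follows from connectedness of the space of ordered triples of distinct lines in $\mathbb{P}T_pS$ together with the local homeomorphism property of $f$.
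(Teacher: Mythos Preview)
Your proposal is correct and follows essentially the same route as the paper: bijectivity via $\bar{d}(f^{-1})_{f(p)}$, then preservation/reversal of cyclic order on triples using three pairwise transverse $C^1$ representatives in a small neighborhood of $p$, then the standard fact that cyclic-order-preserving bijections of $S^1$ are homeomorphisms. The only minor difference is that the paper handles your ``secondary point'' more directly: rather than arguing consistency across triples via connectedness, it simply observes that a surface homeomorphism is locally orientation preserving or reversing at $p$, and this fixed local orientation behavior determines the sense uniformly for all triples.
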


We start with showing that the map is bijective.

\begin{lemma}\label{tangentlinemapwelldefined}
Let $f \in \thomeo(S)$.  Then for all $p \in S$,   the induced map $\bar{d}f_p$ is a bijection.
\end{lemma}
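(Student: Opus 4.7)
The plan is to deduce bijectivity of $\bar{d}f_p$ directly from Lemma~\ref{autfinpreservetangent}, exploiting the symmetry that $f^{-1}$ also lies in $\thomeo(S)$ and therefore enjoys all the properties already established for $f$. Recall that $\bar{d}f_p(\ell)$ is defined by choosing a $C^1$ curve $\alpha$ through $p$ with tangent line $\ell$ at $p$ and declaring $\bar{d}f_p(\ell)$ to be the tangent line of the $C^1$ curve $f(\alpha)$ at $f(p)$; Lemma~\ref{autfinpreservetangent} ensures this is independent of the choice of $\alpha$. The same construction applied to $f^{-1}$ yields a well-defined map $\bar{d}(f^{-1})_{f(p)} : \mathbb{P}T_{f(p)}S \to \mathbb{P}T_pS$, and the natural next step is to verify that these two maps are mutual inverses.

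For surjectivity, I would fix an arbitrary $\ell' \in \mathbb{P}T_{f(p)}S$ and take any simple closed $C^1$ curve $\beta$ through $f(p)$ whose tangent line at $f(p)$ is $\ell'$. Because $f^{-1}$ maps $C^1$ curves to $C^1$ curves, $f^{-1}(\beta)$ is a simple closed $C^1$ curve through $p$, so it possesses a well-defined tangent line $\ell$ at $p$. Applying $f$ to the curve $f^{-1}(\beta)$ recovers $\beta$, whose tangent line at $f(p)$ is $\ell'$, so $\bar{d}f_p(\ell)=\ell'$ by the definition of the induced map.

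For injectivity, I would suppose $\bar{d}f_p(\ell_1) = \bar{d}f_p(\ell_2) = \ell'$ and pick $C^1$ representatives $\alpha_1, \alpha_2$ through $p$ with tangent lines $\ell_1, \ell_2$ respectively. Then $f(\alpha_1)$ and $f(\alpha_2)$ are $C^1$ curves through $f(p)$ sharing the tangent line $\ell'$ there. Now apply Lemma~\ref{autfinpreservetangent} to $f^{-1} \in \thomeo(S)$: since $f^{-1}(f(\alpha_1)) = \alpha_1$ and $f^{-1}(f(\alpha_2)) = \alpha_2$ share a tangent line at $p$, we conclude $\ell_1 = \ell_2$. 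Combined with surjectivity, this gives a bijection, and the composition argument above moreover identifies $\bar{d}(f^{-1})_{f(p)}$ as its two-sided inverse.

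I do not expect a substantive obstacle here: the entire content of the lemma is the symmetric deployment of Lemma~\ref{autfinpreservetangent} for $f$ and $f^{-1}$. The only point that deserves a sentence of care is confirming that for every line $\ell$ in every projective tangent space one can actually produce a \emph{simple closed} $C^1$ curve through the given point with that tangent line, so that Lemma~\ref{autfinpreservetangent} is applicable; this is immediate by working in a smooth local chart, taking a short line segment in the prescribed direction, and closing it up to a simple smooth loop disjoint from the segment outside a neighborhood of $p$.
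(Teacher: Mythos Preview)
Your proposal is correct and takes essentially the same approach as the paper: both arguments hinge on the observation that $f^{-1}\in\thomeo(S)$, so Lemma~\ref{autfinpreservetangent} yields a well-defined map $\bar d(f^{-1})_{f(p)}$, and the relations $f\circ f^{-1}=f^{-1}\circ f=\id$ force these induced maps to be mutual inverses. The paper simply states this functorially as $\bar d f_p\circ \bar d f^{-1}_{f(p)}=\id=\bar d f^{-1}_{f(p)}\circ \bar d f_p$, whereas you unpack the same computation into explicit surjectivity and injectivity arguments.
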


\begin{proof}
Note that for any $f \in \thomeo(S)$, the inverse map $f^{-1}$ must also be in $\thomeo(S)$.  Thus by Lemma~\ref{autfinpreservetangent},  $f^{-1}$ induces a map $$\bar{d}f^{-1}_{f(p)}: \mathbb{P}T_{f(p)}S \rightarrow \mathbb{P}T_pS$$ Moreover, since $f \circ f^{-1} = f^{-1} \circ f = \id$, then 
$$
\bar{d}f_p \circ \bar{d}f^{-1}_{f(p)}  = \bar{d} \id_{f(p)} = \id  $$
and 
  $$\bar{d}f^{-1}_{f(p)} \circ \bar{d}f_p  = \bar{d} \id_p = \id $$
Thus $\bar{d}f_p$ is a bijection. 
\end{proof}

\noindent To show that $\bar{d}f_p$ is a homeomorphism, we first
observe the following:
\begin{obs}\label{bijwithorderonRishomeo}
If $h: \mathbb{R} \rightarrow \mathbb{R}$ is a bijection that preserves (or reverses) the natural ordering on $\mathbb{R}$,  then $h$ is a homeomorphism of $\mathbb{R}$. 
\end{obs}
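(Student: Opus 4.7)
The plan is to first reduce to the order-preserving case and then verify directly that $h$ and $h^{-1}$ are continuous. If $h$ reverses order, then $h \circ (-\id)$ is an order-preserving bijection of $\mathbb{R}$, and $-\id$ is a homeomorphism of $\mathbb{R}$, so it suffices to treat the order-preserving case.

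Assume then that $h$ is an order-preserving bijection of $\mathbb{R}$. The key observation is that any monotone function has one-sided limits at every point, so at each $x_0 \in \mathbb{R}$ the quantities $h(x_0^-) := \sup_{x < x_0} h(x)$ and $h(x_0^+) := \inf_{x > x_0} h(x)$ exist in $\mathbb{R}$ (they are finite because $h$ takes arbitrarily large and arbitrarily small values on either side, using surjectivity onto $\mathbb{R}$). Monotonicity forces $h(x_0^-) \leq h(x_0) \leq h(x_0^+)$. If either inequality were strict, then the open interval $(h(x_0^-), h(x_0))$ or $(h(x_0), h(x_0^+))$ would contain a point $y$ not in the image of $h$: any $x \neq x_0$ either satisfies $x < x_0$, giving $h(x) \leq h(x_0^-) < y$, or $x > x_0$, giving $h(x) \geq h(x_0^+) > y$. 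This contradicts surjectivity. Hence $h(x_0^-) = h(x_0) = h(x_0^+)$, so $h$ is continuous at every point.

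For the continuity of the inverse, I would simply observe that $h^{-1}: \mathbb{R} \to \mathbb{R}$ is itself an order-preserving bijection (order-preservation is symmetric under inversion for bijections) and apply the previous paragraph verbatim to $h^{-1}$. This gives that $h^{-1}$ is continuous, and so $h$ is a homeomorphism.

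The main subtle point is really only the step where surjectivity is used to rule out jump discontinuities; everything else is routine. I expect the actual write-up in the paper to either invoke this standard fact directly or sketch the monotone-function argument above, since the observation is a well-known exercise in elementary real analysis.
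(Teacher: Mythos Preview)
Your argument is correct: reducing to the order-preserving case, using monotonicity plus surjectivity to rule out jump discontinuities, and then applying the same reasoning to $h^{-1}$ is the standard proof of this fact. The paper, however, does not prove the observation at all; it is stated without proof and immediately used to deduce the corresponding statement for $S^1$ (Lemma~\ref{bijwithorderonS1ishomeo}). So there is nothing to compare your approach against --- you have supplied a complete justification where the paper simply invokes the result as known, which is exactly what you anticipated in your final paragraph.
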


\noindent We now use this observation to prove a general result for bijections of $S^1$.

\begin{lemma}\label{bijwithorderonS1ishomeo}
If $h: S^1 \rightarrow S^1$ is a bijection that preserves (or reverses) the cyclic order of every triple, then $h$ is a homeomorphism.
\end{lemma}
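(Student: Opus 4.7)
The plan is to reduce to Observation~\ref{bijwithorderonRishomeo} by removing a single point from $S^1$. Fix any point $r \in S^1$ and set $s = h(r)$. The complement $S^1 \setminus \{r\}$ is homeomorphic to $\mathbb{R}$, and once an orientation of $S^1$ is fixed, the cyclic order induces a natural linear order $<_r$ on $S^1 \setminus \{r\}$: declare $x <_r y$ whenever the triple $(r, x, y)$ is positively cyclically ordered. A routine check shows that $<_r$ is a linear order and that the order topology on $(S^1 \setminus \{r\}, <_r)$ coincides with the subspace topology from $S^1$, so that $(S^1 \setminus \{r\}, <_r)$ is order-isomorphic to $\mathbb{R}$ as a topological space.

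Now consider the restriction $h|_{S^1 \setminus \{r\}} : S^1 \setminus \{r\} \to S^1 \setminus \{s\}$. Since $h$ is a bijection of $S^1$ sending $r$ to $s$, the restriction is a bijection between the two complements. If $h$ preserves the cyclic order of every triple, then $x <_r y$ if and only if $h(x) <_s h(y)$, so under the identifications with $\mathbb{R}$ the restriction is an order-preserving bijection; if $h$ reverses cyclic order, it is order-reversing. In either case Observation~\ref{bijwithorderonRishomeo} gives that $h|_{S^1 \setminus \{r\}}$ is a homeomorphism onto $S^1 \setminus \{s\}$.

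To deduce continuity of $h$ at an arbitrary point $p \in S^1$, pick any $r \neq p$; then $p$ lies in the open set $S^1 \setminus \{r\}$, on which $h$ agrees with the homeomorphism $h|_{S^1 \setminus \{r\}}$ just constructed, so $h$ is continuous at $p$. Since the inverse of a cyclic-order-preserving (resp.\ reversing) bijection is itself cyclic-order-preserving (resp.\ reversing), the same argument applied to $h^{-1}$ gives continuity of the inverse. Hence $h$ is a homeomorphism.

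I do not expect a serious obstacle here; the only mildly technical point is the verification that the order topology induced by $<_r$ on $S^1 \setminus \{r\}$ matches the subspace topology, which amounts to checking that the open arcs of $S^1$ not containing $r$ coincide with the $<_r$-intervals. Careful bookkeeping of the sign conventions for cyclic order is the one place where it pays to be explicit, but this is a routine exercise rather than a genuine difficulty.
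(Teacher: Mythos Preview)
Your proof is correct and follows essentially the same approach as the paper: remove a point, use the cyclic order to induce a linear order on the complement, and invoke Observation~\ref{bijwithorderonRishomeo}. The only cosmetic differences are that the paper first postcomposes with a rotation so that $h$ fixes the removed point (working with a single complement $S^1\setminus\{c\}$ rather than two), and handles continuity at the removed point by a direct neighborhood argument, whereas you obtain continuity everywhere by varying the removed point and then symmetrically treat $h^{-1}$; both finishes are equally valid.
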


\begin{proof}
By postcomposing with a rotation, we can assume that $h$ fixes some point $c \in S^1$.  Let $\varphi: S^1 \setminus\{c\} \rightarrow \mathbb{R}$ be a homeomorphism such that for any $x, y \in \mathbb{R}$ such that $x<y$,  the cyclic triple $[\varphi^{-1}(x) \varphi^{-1}(y) c]$ of $S^1$ is in clockwise order.   

\begin{figure}[h]
\centering
\includegraphics[width=119mm]{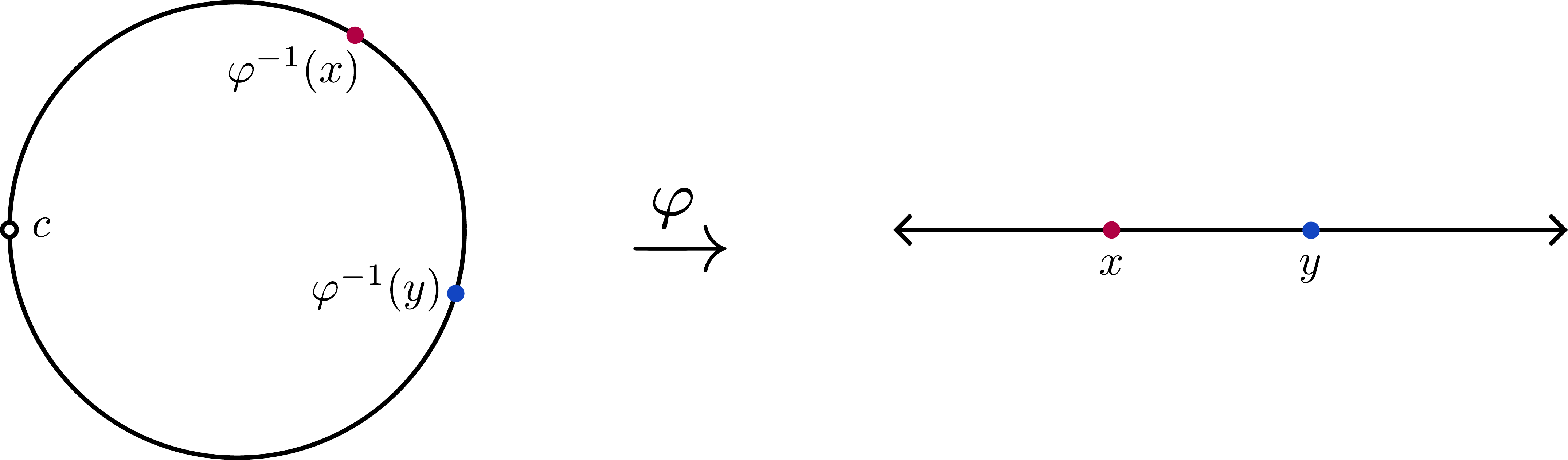} 
\caption{The homeomorphism $\varphi$ from $S^1 \setminus \{c\}$ to $\mathbb{R}$}
\end{figure}

Then $\varphi \circ h|_{S^1 \setminus\{c\}} \circ \varphi^{-1}$ is an order preserving (or reversing) bijection on $\mathbb{R}$. So by Observation \ref{bijwithorderonRishomeo}, $\varphi \circ h|_{S^1 \setminus\{c\}} \circ \varphi^{-1}$ is a homeomorphism. Thus $h|_{S^1 \setminus\{c\}}$ is also a homeomorphism.  Moreover, since any neighborhood of $c$ will be sent by $h|_{S^1 \setminus\{c\}}$ to a neighborhood of $c$, then $h|_{S^1 \setminus\{c\}}$ can be extended to $h$ and remain a homeomorphism on all of $S^1$. 
\end{proof}

The next lemma shows that the induced map on the projective tangent space satisfies the conditions of the previous lemma.

\begin{lemma}\label{homeo1preservetriple}
If $f \in \homeo^1(S)$ is locally orientation preserving (or reversing) at $p \in S$, then the induced map $\bar{d}f_p$ preserves (or reverses) the cyclic order of every triple.
\end{lemma}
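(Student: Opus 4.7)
The plan is to convert the cyclic order on $\mathbb{P}T_pS \cong S^1$ into a cyclic arrangement of six points on a small circle around $p$, where the action of a locally orientation-preserving homeomorphism is transparent. Given three distinct lines $\ell_1, \ell_2, \ell_3 \in \mathbb{P}T_pS$, I would first realize each $\ell_i$ as the tangent line at $p$ of a simple closed $C^1$ curve $\alpha_i$, chosen so that in a smooth chart around $p$ the curves pairwise intersect only at $p$ near $p$. I then fix a smoothly embedded closed disk $D$ centered at $p$ small enough that each $\alpha_i \cap \overline{D}$ is an arc meeting $\partial D$ transversely in exactly two points. For $D$ sufficiently small, the tangency condition forces the two boundary points of $\alpha_i$ to lie near the two points where $\ell_i$ meets $\partial D$ in the chart; consequently the cyclic order of the six points $\{\alpha_i \cap \partial D\}_{i=1,2,3}$ around $\partial D$ encodes exactly the cyclic order of $(\ell_1, \ell_2, \ell_3)$ on $\mathbb{P}T_pS$.

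Next I apply $f$: each $f(\alpha_i)$ is a simple closed $C^1$ curve through $f(p)$ with tangent line $\bar{d}f_p(\ell_i)$, and these three lines are pairwise distinct by Lemma \ref{tangentlinemapwelldefined}. Running the same construction on the image side produces a small disk $D'$ centered at $f(p)$ whose boundary intersects the three image curves in six points arranged in the cyclic order of $\bigl(\bar{d}f_p(\ell_1), \bar{d}f_p(\ell_2), \bar{d}f_p(\ell_3)\bigr)$. By shrinking $D$ I can assume that $f(\overline{D}) \subset \operatorname{int}(D')^c{}^c$ wraps $f(p)$ with the boundary circle $f(\partial D)$ isotopic to $\partial D'$ through a family of Jordan curves around $f(p)$; since $f$ is locally orientation preserving (respectively reversing) at $p$, the induced bijection on the six boundary points preserves (respectively reverses) their cyclic order. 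Comparing the two six-point arrangements on $\partial D$ and $\partial D'$ then shows that $\bar{d}f_p$ preserves (respectively reverses) the cyclic order of $(\ell_1, \ell_2, \ell_3)$, as required.

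The main obstacle is the geometric lemma underlying the first paragraph, namely that for $D$ small enough each $C^1$ arc $\alpha_i$ meets $\partial D$ at the two points one would predict from the tangent line $\ell_i$. This follows from the characterization of the tangent line as $\lim (\varphi(p_n) - \varphi(p))/\|\varphi(p_n) - \varphi(p)\|$ in a smooth chart $\varphi$, combined with a compactness argument on $\partial D$; one must pick $D$ small enough to work simultaneously for all three curves, and then further small so that $f(\overline{D})$ is contained in a disk on which the mirror construction for $f(\alpha_i)$ also succeeds. Once these technical choices are in place, the cyclic-order transfer through $f$ is immediate from the fact that an orientation-preserving homeomorphism of a disk restricts to an orientation-preserving homeomorphism of its boundary circle, and the orientation-reversing case is handled verbatim with the cyclic orders reversed.
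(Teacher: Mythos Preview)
Your proposal is correct and follows essentially the same strategy as the paper: realize the three tangent lines by pairwise transverse $C^1$ curves, observe that the cyclic arrangement of their six half-arcs around $p$ encodes the cyclic order on $\mathbb{P}T_pS$, and then use that a locally orientation-preserving homeomorphism preserves this arrangement. The paper's execution is a bit more direct---it works with the six arcs in a punctured neighborhood $N\setminus\{p\}$ and their images in $f(N)\setminus\{f(p)\}$, rather than introducing a second disk $D'$ and an isotopy between $f(\partial D)$ and $\partial D'$---but the underlying idea is identical (and note your expression $\operatorname{int}(D')^{c\,c}$ presumably just means $\operatorname{int}(D')$).
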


\begin{proof}
It suffices to prove this result for the orientation preserving case, since the reasoning is identical in the orientation reversing case, with the corresponding words replaced.

Let $[abc]$ be a cyclic triple of $\mathbb{P}T_pS$ in clockwise order.  Since $a$, $b$, and $c$ are distinct, then there are representative curves $\alpha$, $\beta$, and $\gamma$ that are transverse at $p$ and a connected neighborhood $N$ of $p$ such that these curves are disjoint in $N \setminus \{p\}$.  Denote these disjoint arcs as $\alpha_1$,  $\alpha_2$, $\beta_1$, $\beta_2$, $\gamma_1$, and $\gamma_2$ such that in clockwise order around $p$, these  arcs occur as the cycle $[\alpha_1 \beta_1 \gamma_1 \alpha_2 \beta_2 \gamma_2 ]$.

\begin{figure}[h]
\centering
\includegraphics[width=119mm]{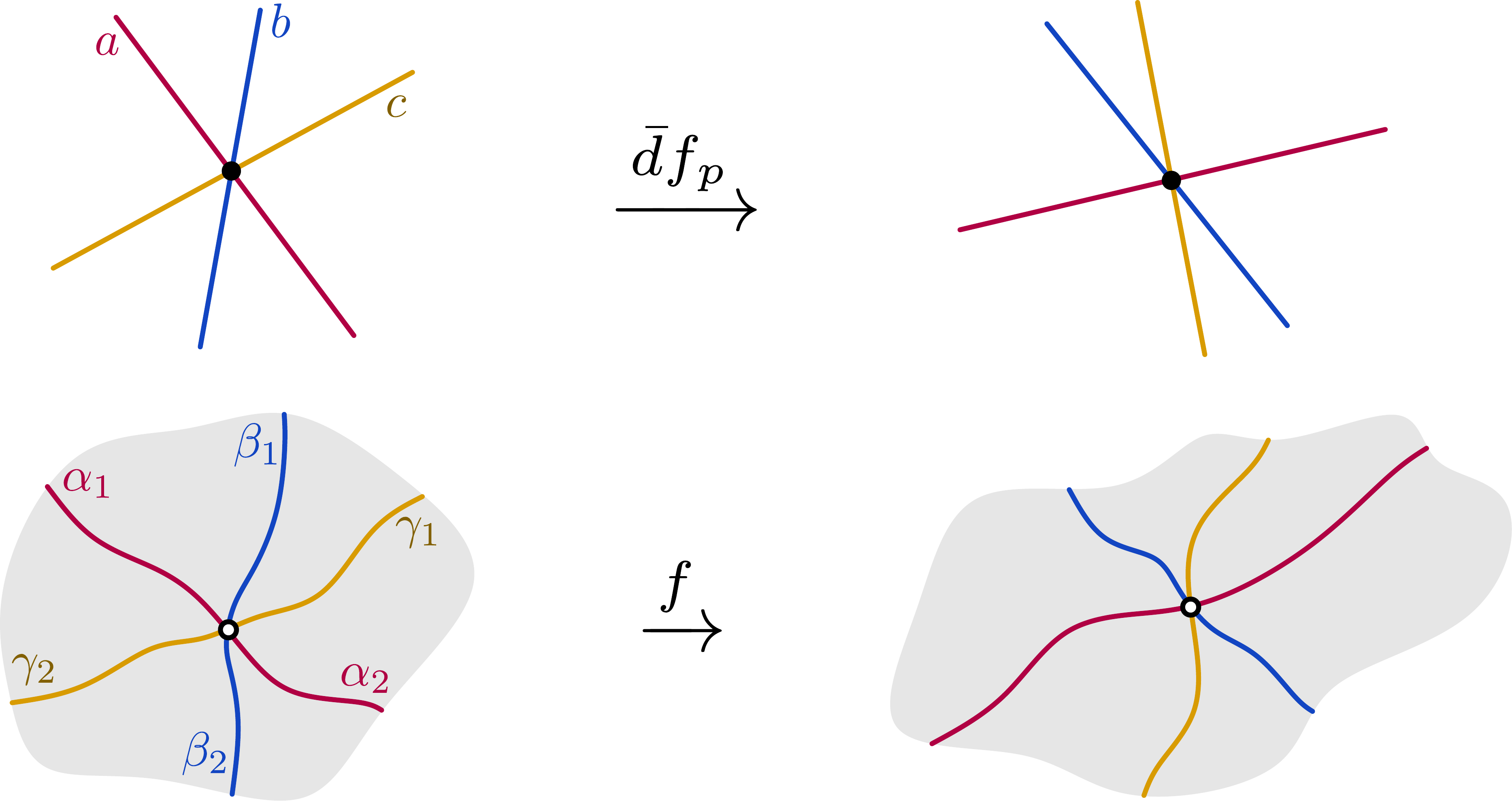} 
\caption{The correspondence between cyclic triples in $\mathbb{P}T_pS$ and their representatives on $S$ near the point $p$}
\end{figure}

Since $f$ is a homeomorphism, then the arcs $f(\alpha_1), f(\alpha_2), f(\beta_1), f(\beta_2), (\gamma_1)$, and $f(\gamma_2)$ are also disjoint in $f(N \setminus\{p\})$.  Moreover,  the cycle $[f(\alpha_1) f(\beta_1) f(\gamma_1) f(\alpha_2) f(\beta_2) f(\gamma_2) ]$ is in clockwise order since $f$ is orientation preserving.  Thus $f(\alpha)$, $f(\beta)$, and $f(\gamma)$ are still topologically transverse at $p$ with the same ordering. 

By Lemma \ref{tangentlinemapwelldefined},  we know that $\bar{d}f_p$ is a bijection. Thus $\bar{d}f_p(a), \bar{d}f_p(b),$ and $\bar{d}f_p(c)$ are distinct and the cyclic triple $[\bar{d}f_p(a) \,\bar{d}f_p(b) \, \bar{d}f_p(c)]$ is in clockwise order. Thus $\bar{d}f_p$ preserves the cyclic order of every triple.
\end{proof}

We can now combine the above results to prove the main lemma of this section.

\begin{proof}[Proof of Lemma \ref{projtangentspaceishomeo}]
By Lemmas \ref{tangentlinemapwelldefined} and \ref{homeo1preservetriple}, we have that for any $f \in \homeo^1(S)$, the map $\bar{d}f_p$ satisfies the conditions of Lemma \ref{bijwithorderonS1ishomeo}.  Thus $\bar{d}f_p$ is a homeomorphism for every $p \in S$. 
\end{proof}

\subsection{Property $\mathbf{(c)}$: Mapping transverse sequences}

In this section, we prove the following lemma, showing  property $(c)$ from our Main Theorem.

\begin{lemma}\label{preservetransverse}
Let $f \in \thomeo(S)$. Suppose that $\{(p_n, \ell_n)\}$ is a transverse sequence that converges to a point $p$ along $\ell$. Then $\{(f(p_n), \bar{d}f_p(\ell_n))\}$ is a transverse sequence that converges to the point $f(p)$ along $\bar{d}f_p(\ell)$.
\end{lemma}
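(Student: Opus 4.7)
The plan is to prove the lemma by contradiction, establishing separately the two conditions required for $\{(f(p_n), \bar{d}f_{p_n}(\ell_n))\}$ to be a transverse sequence for $(f(p), \bar{d}f_p(\ell))$: (i) that $\{f(p_n)\}$ converges along $\bar{d}f_p(\ell)$ to $f(p)$, and (ii) that no subsequence of $\{(f(p_n), \bar{d}f_{p_n}(\ell_n))\}$ converges to $(f(p), \bar{d}f_p(\ell))$ in $\mathbb{P}TS$. The main technical tool used in both parts is an auxiliary \emph{interpolation claim}: given any sequence $\{(q_k, m_k)\}$ in $\mathbb{P}TS$ converging to $(q, m)$, after passing to a subsequence one can construct a $C^1$ arc through $q$ tangent to $m$ at $q$ and passing through each $q_k$ tangent to $m_k$. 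In a smooth chart at $q$ this reduces to a Hermite-type piecewise cubic interpolation; passing to a subsequence whose base points are sufficiently well-separated in the chart ensures the pieces glue into a $C^1$ arc whose tangent at the limit point is the prescribed one.

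For part (i), suppose for contradiction that $\{f(p_n)\}$ does not converge along $\bar{d}f_p(\ell)$ to $f(p)$. Since $f(p_n) \to f(p)$ by continuity of $f$ and $\mathbb{P}T_{f(p)}S$ is compact, extract a subsequence $\{f(p_{n_k})\}$ whose secant directions at $f(p)$ converge to some $m' \ne \bar{d}f_p(\ell)$. Apply the interpolation claim (fixing only the tangent $m'$ at $f(p)$ and choosing arbitrary tangent data at the $f(p_{n_k})$) to obtain a $C^1$ arc $\beta$ through $f(p)$ and a further subsequence of $\{f(p_{n_k})\}$ with tangent $m'$ at $f(p)$. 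Since $f^{-1} \in \thomeo(S)$, $f^{-1}(\beta)$ is a $C^1$ arc through $p$ and the corresponding $p_{n_k}$ with tangent $\bar{d}f_p^{-1}(m')$ at $p$. The secant directions from $p$ to these $p_{n_k}$ therefore converge to $\bar{d}f_p^{-1}(m')$; but they must also converge to $\ell$ by the hypothesis that $p_n \to p$ along $\ell$. Hence $m' = \bar{d}f_p(\ell)$, a contradiction.

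For part (ii), suppose for contradiction that some subsequence $\{(f(p_{n_k}), \bar{d}f_{p_{n_k}}(\ell_{n_k}))\}$ converges to $(f(p), \bar{d}f_p(\ell))$ in $\mathbb{P}TS$. Apply the interpolation claim to this convergent sequence to obtain (after a further subsequence) a $C^1$ arc $\beta$ through $f(p)$ tangent to $\bar{d}f_p(\ell)$ and through each $f(p_{n_k})$ tangent to $\bar{d}f_{p_{n_k}}(\ell_{n_k})$. Since $f^{-1} \in \thomeo(S)$, $f^{-1}(\beta)$ is $C^1$, passes through $p$ with tangent $\ell$, and passes through each $p_{n_k}$ with tangent $\ell_{n_k}$ (because $\bar{d}f_{p_{n_k}}$ is a bijection on the projective tangent space by Lemma \ref{tangentlinemapwelldefined}, so $\bar{d}f_{p_{n_k}}^{-1}(\bar{d}f_{p_{n_k}}(\ell_{n_k})) = \ell_{n_k}$). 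Continuity of tangents along the $C^1$ arc $f^{-1}(\beta)$ then forces $(p_{n_k}, \ell_{n_k}) \to (p, \ell)$ in $\mathbb{P}TS$, contradicting that $\{(p_n, \ell_n)\}$ is a transverse sequence.

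The main obstacle is the interpolation claim itself: producing a genuinely $C^1$ arc through a convergent sequence in $\mathbb{P}TS$ that realizes both the prescribed tangent at every $q_k$ and the prescribed tangent at $q$ requires careful control of the spacing of the chosen subsequence (in a chart) and the derivatives of the interpolating pieces, for example by shrinking successive sub-intervals geometrically and applying a bound on the Hermite cubic slopes. Everything else is formal, using continuity of $f^{-1}$, the homeomorphism property of $\bar{d}f_p$ from Lemma \ref{projtangentspaceishomeo}, and the definitions of convergence along a line and of convergence in $\mathbb{P}TS$ via a local trivialization.
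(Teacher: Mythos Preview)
Your proposal is correct. Part~(ii) is essentially identical to the paper's argument: both build a $C^1$ curve through a subsequence of the image realizing the prescribed tangents (your interpolation claim is precisely the content of the paper's Lemma~\ref{c1throughtangents}/Corollary~\ref{c1curvewithtangentsonS}, though the paper uses an explicit sinusoidal interpolant rather than Hermite cubics), then pull back by $f^{-1}$ to contradict transversality.

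Part~(i), however, takes a genuinely different route. The paper proves a separate topological characterization of ``converging along $\ell$'' in terms of eventually lying between any two $C^1$ curves $\gamma^{\pm}$ whose tangents at the point straddle $\ell$ (Lemma~\ref{convergealongline}); since $f$ carries such a pair to a pair straddling $\bar d f_p(\ell)$ (using that $\bar d f_p$ is a homeomorphism), and preserves the ``same component'' relation, convergence along the image line follows. You instead reuse the interpolation claim: extract a subsequence with secant directions limiting on some $m'$, thread a $C^1$ arc through it, and pull back to force $m'=\bar d f_p(\ell)$. Your approach is more uniform---one tool for both halves---and bypasses Lemma~\ref{convergealongline} entirely. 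The paper's approach, on the other hand, isolates a characterization of line-convergence that is of independent interest and does not require the full strength of the interpolation lemma at that stage. One small point: where you say ``choosing arbitrary tangent data at the $f(p_{n_k})$'' you should choose data converging to $m'$ (e.g.\ constant $m'$) so that the interpolation hypothesis is met, but this is a trivial fix.
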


Before we can prove Lemma~\ref{preservetransverse},  we need to give some intermediary results giving the existence of $C^1$ curves through non-transverse sequences in $S \times \mathbb{P}TS$.

\begin{lemma}\label{c1throughtangents}
Let $\{p_n\}$ be a sequence of points in $\mathbb{R}^2 \setminus \{p\}$ that converge along a line $\ell$ to a point $p$.  If $\{(p_n, \ell_n)\}$ is a sequence in $\mathbb{R}^2 \times \mathbb{P}T_{p_n}\mathbb{R}^2$ that converges to $(p,  \ell)$, then there is a subsequence $\{(p_{n_k}, \ell_{n_k})\}$ and a simple closed $C^1$ curve $\gamma$ such that for all $k$, $p_{n_k}$ is a point on $\gamma$ and the tangent line of $\gamma$ at $p_{n_k}$ is $\ell_{n_k}$.
\end{lemma}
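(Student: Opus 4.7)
The plan is to build $\gamma$ explicitly as a $C^1$ graph over a piece of the $x$-axis and then close it off in the complement. I would first normalize: applying a linear change of coordinates, I may assume $p = (0,0)$ and that $\ell$ is the $x$-axis. Writing $p_n = (x_n, y_n)$, the hypothesis that $p_n$ converges to $p$ along $\ell$ becomes $y_n/x_n \to 0$, and the convergence $(p_n, \ell_n) \to (p, \ell)$ in $\mathbb{P}T\mathbb{R}^2$ means that for all large $n$ the line $\ell_n$ is non-vertical with slope $m_n$ satisfying $m_n \to 0$.

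Next I would extract a subsequence. After discarding finitely many terms and possibly reflecting $x \mapsto -x$, I may assume $x_n > 0$ for every $n$. Choose indices $n_1 < n_2 < \cdots$ inductively so that $x_{n_{k+1}} < x_{n_k}/2$, and relabel the selected points as $(x_k, y_k)$ with slopes $m_k = m_{n_k}$. The closed intervals $I_k = [x_{k+1}, x_k]$ then have pairwise disjoint interiors and satisfy $|I_k| \ge x_k/2$. On each $I_k$ I would take the cubic Hermite interpolant $H_k \colon I_k \to \mathbb{R}$ — the unique cubic polynomial with $H_k(x_j) = y_j$ and $H_k'(x_j) = m_j$ for $j \in \{k, k+1\}$. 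Since values and slopes match at every shared endpoint, the union of the graphs of the $H_k$ is the graph of a $C^1$ function $F$ on $(0, x_1]$ passing through each $(x_k, y_k)$ with slope $m_k$.

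The key analytic step is extending $F$ to a $C^1$ function on $[0, x_1]$ with $F(0) = F'(0) = 0$. Standard bounds for cubic Hermite interpolation give an absolute constant $C$ such that
\[
\sup_{I_k} |H_k| \le C\bigl( |y_k| + |y_{k+1}| + (|m_k| + |m_{k+1}|)|I_k| \bigr)
\]
and
\[
\sup_{I_k} |H_k'| \le C\bigl( |y_k|/|I_k| + |y_{k+1}|/|I_k| + |m_k| + |m_{k+1}| \bigr).
\]
Since $|I_k| \ge x_{k+1}$, the ratios $|y_j|/|I_k|$ are bounded by $2|y_j|/x_j \to 0$; combined with $m_k \to 0$, this forces $\sup_{I_k}|F|$ and $\sup_{I_k}|F'|$ to tend to $0$ as $k \to \infty$. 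Thus $F$ extends continuously to $0$ by $F(0) = 0$, and $F'(x) \to 0$ as $x \to 0^+$. A standard mean-value-theorem argument then promotes $F$ to a $C^1$ function on $[0, x_1]$ with $F'(0) = 0$, matching the prescribed tangent $\ell$ at $p$.

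To close the curve I would attach to the graph of $F$ an embedded smooth arc lying in the open left half-plane $\{x < 0\}$ except at its endpoints, joining $(0,0)$ to $(x_1, y_1)$ with tangent $\ell$ at $(0,0)$ and tangent $\ell_{n_1}$ at $(x_1, y_1)$; such an arc is straightforward to produce. The resulting closed curve $\gamma$ is automatically simple since the graph of $F$ meets the closed left half-plane only at $(0,0)$, and it is $C^1$ by construction. It contains each $p_{n_k} = (x_k, y_k)$ with tangent line $\ell_{n_k}$, as required. The main obstacle is the analytic estimate securing $C^1$-regularity at the accumulation point $p$; the remaining steps are essentially bookkeeping in coordinates.
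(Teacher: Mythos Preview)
Your argument is correct in substance and takes a different route from the paper's. The paper extracts a more elaborately refined subsequence (monotone magnitudes, monotone secant slopes between consecutive points, monotone $|m_k|$) and then writes down explicit piecewise formulas on each $[x_{k+1},x_k]$ built from sine functions and linear interpolants, verifying by hand that values and derivatives match at the junctions and tend to zero. You instead take only the sparseness condition $x_{n_{k+1}} < x_{n_k}/2$, use cubic Hermite interpolation on each interval, and invoke the standard basis-function bounds to control $\sup_{I_k}|F|$ and $\sup_{I_k}|F'|$. This is cleaner: the subsequence extraction is shorter, the $C^1$ matching at interior junctions is automatic, and the limiting behaviour at $0$ follows from the single inequality $|I_k| \ge \max(x_{k+1}, x_k/2)$ together with $y_j/x_j \to 0$ and $m_j \to 0$. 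The paper's approach has the virtue of being fully explicit, but yours is more conceptual and avoids the long formula-checking.

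One small slip to fix: your closing arc cannot literally lie in $\{x<0\}$ except at its endpoints, since the endpoint $(x_1,y_1)$ has $x_1>0$. What you want is an embedded $C^1$ arc disjoint from the graph of $F$ except at its two endpoints, leaving $(0,0)$ to the left along $\ell$ and arriving at $(x_1,y_1)$ from the right with slope $m_1$; this is indeed straightforward (e.g.\ route it through $\{x<0\}\cup\{x>x_1\}$ joined by a segment at large height), but the sentence as written is inconsistent. Once that is corrected your simplicity argument goes through.
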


\begin{proof}
We complete this proof in two steps. We first find the subsequence of points $\{p_{n_k}\}$ and then we find the $C^1$ curve $\gamma$ that passes through this subsequence.  This construction is inspired by a similar proof from Rosenthal~\cite[Theorem~2]{Rosenthal} that finds a $C^1$ curve through a bounded sequence of points.  We have made adjustments to their argument to account for the additional restriction on tangent lines. 

\medskip

\noindent \textit{Step 1: Finding the subsequence.  } Our goal in this step is to restrict the sequence of converging points in such a way to make it possible to draw a $C^1$ curve through the subsequence.

By composing with an affine map, we can assume that $p$ is the origin and the line $\ell$ is the $x$-axis. 
Since the $p_n$ converge to the origin along the $x$-axis, then only finitely many of them can be on the $y$-axis. So there is a subsequence of $\{p_n\}$ such that either the $x$-values are all positive or all negative.   Additionally, there will be a further subsequence such that the $y$-values are all nonnegative or all nonpositive.  Without loss of generality, we will assume that the subsequence has positive $x$-values and nonnegative $y$-values.  We can further take a subsequence $\{p_{n_s}\}$ such that $|p_{n_s}| > |p_{n_{s+1}}| >0$ for all $s$.  For simplicity of notation, we set $p_s = p_{n_s}$.

Denote by $(ab)$ the slope of the line between the points $a$ and $b$.  Since $\{p_s\}$ is converging to the origin along the $x$-axis, then $(p_s p)$ is also converging to 0.  If there is a subsequence with $(p_{s_k} p ) = 0$, then we take this subsequence. Otherwise we can take a subsequence $\{p_{s_k}\}$ such that $(p_{s_k} p ) > ( p_{s_{k+1}} p) > 0 $.  Furthermore,  note that $(p_{s_k} p_{s_{k+r}}) > (p_{s_k} p)$ for all $r \in \mathbb{Z}^+$. But as $r \rightarrow \infty$ this slope will approach $(p_{s_k} p)$. So we can take another subsequence such that $(p_{s_k} p_{s_{k+1}}) < (p_{s_{k-1}} p_{s_{k}})$ for all $k$. 

Finally, we can take a further subsequence so that the magnitudes of the slopes of $\ell_{s_k}$ converge monotonically to 0.  We now set this final subsequence to be the $\{(p_{n_k}, \ell_{n_k})\}$ that we are looking for. 

\medskip

\noindent \textit{Step 2: Drawing the $C^1$ curve.  } With the restricted sequence $\{(p_{n_k}, \ell_{n_k})\}$, we can now find the $C^1$ curve $\gamma$ that passes through the points $p_{n_k}$ with the tangent lines $\ell_{n_k}$. This step is done by giving exact formulas for several arcs and directly checking that the resulting curve is $C^1$.  

For simplicity,  set $p_{n_k} = (x_k, y_k)$ and let $m_k$ be the slope of $\ell_{n_k}$.  Set $(x_0, y_0) = (x_1+1, y_1)$ and $m_0 =0$.  Note that by choice of subsequence,  $0< x_{k+1} < x_k$ for all $k \in \mathbb{Z}_{\geq 0}$. Then for $t \in [x_{k+1}, x_k]$,  we define the curve 
\begin{flalign*}
\gamma_k(t) & = \left\{ \begin{array} {ll}
\left( \frac{2t- x_k-x_{k+1}}{x_k - x_{k+1}} \right) \left( \left( \frac{(x_k-x_{k+1})m_k}{2 \pi} \right) \, \sin \left( \frac{2 \pi (t-x_k)}{x_k - x_{k+1} } \right)+ y_k \right) & \\
 \qquad + \left( 1 - \left( \frac{2t- x_k-x_{k+1}}{x_k - x_{k+1}}  \right)\right) \left( \frac{y_k - y_{k+1}}{x_k - x_{k+1}} (t-x_{k}) + y_k \right) & \mbox{ if } \frac{x_k+x_{k+1}}{2} \leq t \leq x_k \\
 \left( \frac{2t- x_k-x_{k+1}}{ x_{k+1}-x_k} \right) \left( \left( \frac{(x_k-x_{k+1})m_{k+1}}{2 \pi} \right) \, \sin \left( \frac{2 \pi (t-x_{k+1})}{x_k - x_{k+1} } \right)+ y_{k+1} \right) & \\
 \qquad + \left( 1 - \left( \frac{2t- x_k-x_{k+1}}{x_{k+1}-x_k}  \right)\right) \left( \frac{y_k - y_{k+1}}{x_k - x_{k+1}} (t-x_{k+1}) + y_{k+1} \right) & \mbox{ if } x_{k+1} \leq t \leq  \frac{x_k+x_{k+1}}{2}
\end{array} \right.
\end{flalign*}
Note that 
$$\gamma_k(x_k) = y_k,  \; \gamma_k(x_{k+1}) = y_{k+1},  \; \mbox{ and } \; \gamma_k\left( \frac{x_k+x_{k+1}}{2} \right) = \frac{y_k+y_{k+1}}{2} $$
Moreover, 
\begin{flalign*}
\gamma_k'(t) = \left\{ \begin{array} {ll}
 m_k \left( \left( \frac{2t - x_k - x_{k+1}}{x_k - x_{k+1}} \right)  \cos\left( \frac{ 2 \pi (t-x_k)}{x_k - x_{k+1}} \right) + \frac 1 {\pi} \sin\left( \frac{ 2 \pi (t-x_k)}{x_k - x_{k+1}} \right) \right) & \\ \qquad + \; 2 \left( 1- \frac{2t - x_k - x_{k+1}}{x_k - x_{k+1}} \right) \left( \frac{ y_k - y_{k+1}}{x_k - x_{k+1}} \right) & \mbox{ if } \frac{x_k+x_{k+1}}{2} \leq t \leq x_k \\
  m_{k+1} \left( \left( \frac{2t - x_k - x_{k+1}}{x_{k+1} - x_{k}} \right)  \cos\left( \frac{ 2 \pi (t-x_{k+1})}{x_k - x_{k+1}} \right) - \frac 1 {\pi} \sin\left( \frac{ 2 \pi (t-x_{k+1})}{x_k - x_{k+1}} \right) \right) & \\ \qquad + \; 2 \left( 1- \frac{2t - x_k - x_{k+1}}{x_{k+1} - x_{k}} \right) \left( \frac{ y_k - y_{k+1}}{x_k - x_{k+1}} \right) & \mbox{ if } x_{k+1} \leq t \leq  \frac{x_k+x_{k+1}}{2} \\
\end{array} \right.
\end{flalign*}
with $$\gamma_k'(x_k) = m_k,  \; \gamma_k'(x_{k+1}) = m_{k+1},  \; \mbox{ and } \; \gamma_k'\left( \frac{x_k+x_{k+1}}{2} \right) = 2\frac{y_k-y_{k+1}}{x_k - x_{k+1}} $$
We can see that each $\gamma_k'$ is continuous on its domain. Since both $m_k$ and $\frac{y_k-y_{k+1}}{x_k - x_{k+1}} $ approach 0 as $k \rightarrow \infty$ and all other terms are bounded, then $\gamma_k'(t) \rightarrow 0$ as $k \rightarrow \infty$.  We then define the arc $\xi : [0,x_0] \rightarrow \mathbb{R}^2$ as 
$$\xi(t) = \left\{ \begin{array} {ll}
(0,0) & \mbox{ if } t = 0 \\
\left( t, \gamma_k\left( t \right) \right)& \mbox{ if } x_{k+1} \leq t \leq x_k
\end{array} \right. $$

\noindent Note that $\gamma_k(x_{k+1}) = y_{k+1} = \gamma_{k+1}(x_{k+1})$ and $\gamma'_k(x_{k+1}) = m_{k+1} = \gamma'_{k+1}(x_{k+1})$ and each $\gamma_k$ is $C^1$.  Since $\xi'(t) = (1, \gamma'_k(t)) \neq 0$, then $\xi$ is a $C^1$ arc. 

Now let $ y_{\max} = 1 + \max_{k, t} \gamma_k(t)$.  We then define arcs $\alpha:[0,1] \rightarrow \mathbb{R}^2$,  $\beta: [0,1] \rightarrow \mathbb{R}^2$, and $\delta: [0,1] \rightarrow \mathbb{R}^2$ by:
\begin{flalign*}
\alpha(t) & = \left( y_{\max}  - \frac{ y_0}{2} \right) \left( \cos( \pi ( t + 1/2)),   1+\sin( \pi(t+1/2)) \right) \\
\beta(t) & = \left(x_0(1-t),   2y_{\max}  -  y_0 \right) \\
\delta(t) & = \left( (y_{max} -  y_0) \cos(\pi (t-1/2)) + x_0,   (y_{max} -  y_0) \sin(\pi (t-1/2)) + y_{\max} \right)
\end{flalign*}
Note that $\xi, \alpha, \beta$, and $\delta$ together form a single closed curve $\gamma$ since
\begin{flalign*}
\xi(0) = \alpha(1) & = (0,0) \\
\alpha(0) = \beta(1) & = \left(0,  2y_{\max}  -  y_0 \right)  \\
\beta(0) = \delta(1) & = \left(x_0,  2y_{\max}  -  y_0 \right)  \\
\delta(0) = \xi(x_0) & = (x_0, y_0) 
\end{flalign*}

\begin{figure}[h]
\centering
\includegraphics[width=119mm]{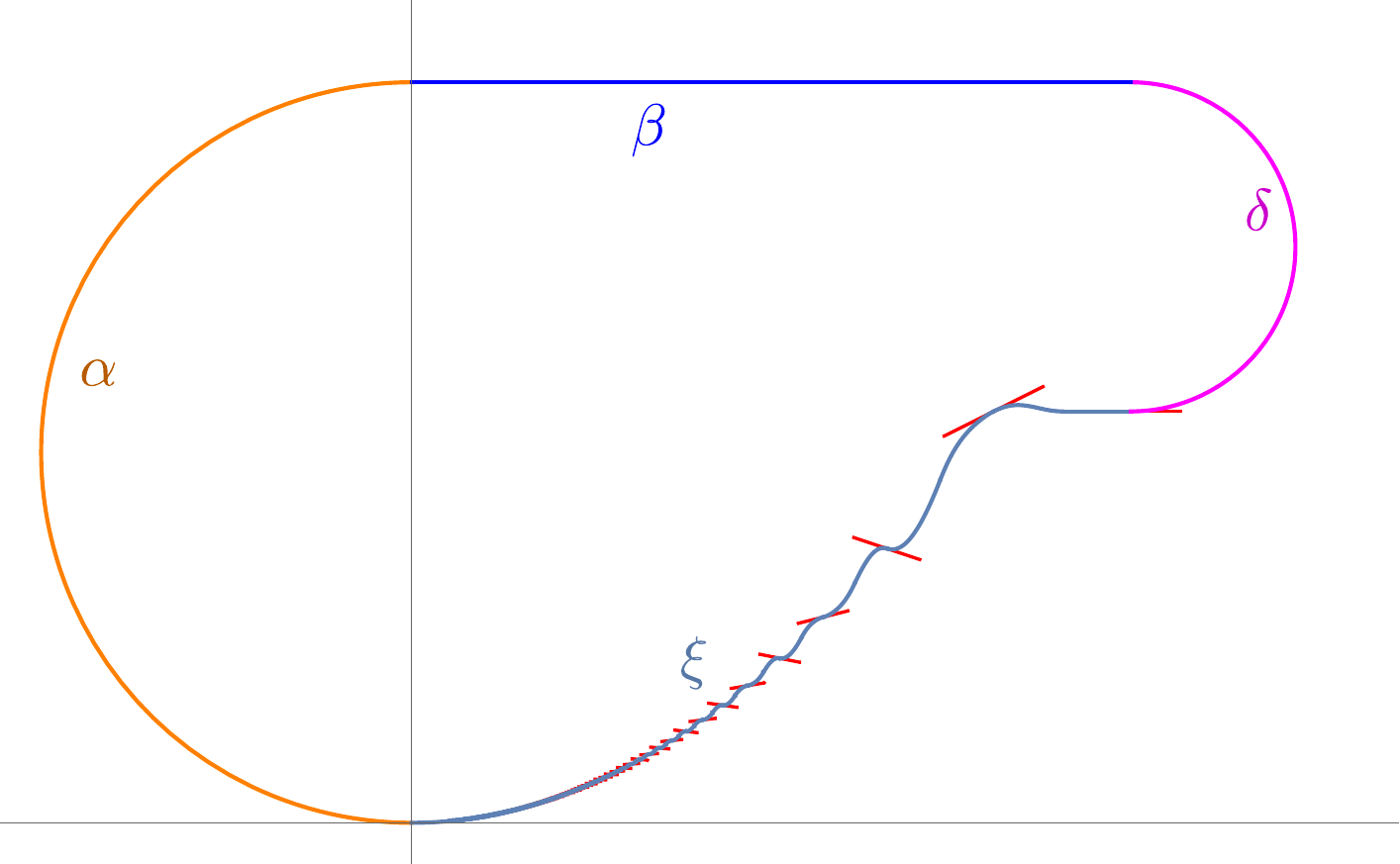} 
\caption{The closed $C^1$ curve $\gamma$ in $\mathbb{R}^2$}
\end{figure}
\noindent In addition,  $\alpha, \beta$ and $\delta$ are all $C^1$ since 
\begin{flalign*}
\alpha'(t) & = \left( y_{\max}  - \frac{ y_0}{2} \right) \left( - \pi \sin( \pi ( t + 1/2)),   \pi \cos( \pi(t+1/2)) \right)  \\
\beta'(t) & = \left( - x_0, 0 \right) \\
\delta'(t) & =  \left( - \pi  (y_{max} -  y_0) \sin(\pi (t-1/2)),  \pi  (y_{max} -  y_0) \cos(\pi (t-1/2)) \right)
\end{flalign*}

\noindent In order to get that the closed curve $\gamma$ is $C^1$, we also need to check is that the tangent lines agree where the arcs $\xi, \alpha, \beta, $ and $\delta$ meet.  We denote the slope of the tangent lines of an arc $\rho(t) = \left(\rho_x(t), \rho_y(t)\right)$ at $t$ by $$m_{\rho}(t) = \frac{\rho_y'(t)}{\rho_x'(t)}$$
It can be directly computed that $$ m_{\xi}(0) = m_{\xi}(x_0) =  m_{\alpha}(0)  = m_{\alpha}(1) = m_{\beta}(0) = m_{\beta}(1) = m_{\delta}(0) = m_{\delta}(1) = 0$$
Since the slopes are the same,  the tangent lines agree at the endpoints of every pair of arcs.  Finally,  we need to show that $\gamma$ is a simple curve.  By construction,  
\begin{flalign*} 
\xi & \subset [0,x_0] \times (-\infty, y_{\max}-1] \\
\alpha & \subset (-\infty, 0] \times (-\infty, \infty)\\
\beta & \subset [0,x_0] \times [y_{\max}, \infty) \\
\delta & \subset [x_0, \infty] \times (\infty, \infty)
\end{flalign*}
The interiors of the arcs are contained in the interiors of these regions, which are all disjoint. So the arcs can only overlap at the endpoints.  

Thus $\gamma$ is a simple closed $C^1$ curve through the sequence of points $\{p_{n_k}\}$ with tangent line $\ell_{n_k}$ at each $p_{n_k}$.  \end{proof}

\noindent By using a smooth chart, we can extend  Lemma~\ref{c1throughtangents} from $\mathbb{R}^2$ to any surface $S$. This gives us the following result:

\begin{cor}\label{c1curvewithtangentsonS}
Let $S$ be a surface and let $\{p_n\}$ be a sequence of points in $S \setminus \{p\}$ that converge along a line $\ell \in \mathbb{P}T_pS$ to a point $p$.  If $(p_n, \ell_n)$ is a sequence in $S \times \mathbb{P}T_{p_n}\mathbb{R}^2$ that converges to $(p,  \ell)$, then there is a subsequence $\{p_{n_k}\}$ and a simple closed $C^1$ curve $\gamma$ such that for all $k$, $p_{n_k}$ is a point on $\gamma$ and the tangent line of $\gamma$ at $p_{n_k}$ is $\ell_{n_k}$.
\end{cor}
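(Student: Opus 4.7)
The plan is to reduce to the previous lemma by pulling the sequence back to $\mathbb{R}^2$ through a smooth chart. By the definition of convergence along a line, there is a smooth coordinate chart $\varphi: U \to T_p S \cong \mathbb{R}^2$ based at $p$ such that $\{\varphi(p_n)\}$ converges along the line corresponding to $\ell$ to the origin $\varphi(p)=0$. Since $\varphi$ is a smooth diffeomorphism onto its image, its differential induces a continuous map of projective tangent bundles; in particular, since $\{(p_n,\ell_n)\}$ converges to $(p,\ell)$ in $\mathbb{P}TS$, the pushforward sequence $\{(\varphi(p_n), d\varphi_{p_n}(\ell_n))\}$ converges to $(0, d\varphi_p(\ell))$ in $\mathbb{P}T\mathbb{R}^2$.

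I would then apply Lemma~\ref{c1throughtangents} to this pushforward sequence, extracting a subsequence $\{(\varphi(p_{n_k}), d\varphi_{p_{n_k}}(\ell_{n_k}))\}$ together with a simple closed $C^1$ curve $\tilde{\gamma}$ in $\mathbb{R}^2$ passing through each $\varphi(p_{n_k})$ with tangent $d\varphi_{p_{n_k}}(\ell_{n_k})$. Looking at that construction, the sub-arc $\xi$ of $\tilde{\gamma}$ that carries all of the points $\varphi(p_{n_k})$ is localized near the origin, whereas the closing arcs $\alpha,\beta,\delta$ extend out to distance $x_0 \geq 1$ and need not lie in $\varphi(U)$. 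By passing to a further tail of the subsequence, one can assume $\xi$ lies entirely inside $\varphi(U)$, and pulling it back through $\varphi^{-1}$ produces a $C^1$ arc $\bar{\xi}$ in $U \subset S$ through the $p_{n_k}$ with the prescribed tangent lines.

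The final step is to close $\bar{\xi}$ up to a simple closed $C^1$ curve $\gamma$ in $S$ by attaching a $C^1$ arc whose endpoint tangents match those of $\bar{\xi}$ at $p$ and at the outer endpoint, and whose interior is disjoint from $\bar{\xi}$. Since $S$ is a surface (and in particular locally Euclidean and path-connected), one can route the closing arc through a tubular neighborhood of a fixed smooth path in $S$ joining the two endpoints and avoiding $\bar{\xi}$, then smoothly interpolate the tangent directions at the join points.

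The main obstacle is this last closing step: carrying out a $C^1$ closure that is simple, stays in $S$, and matches the prescribed tangent data at both joins. This is a standard but slightly fiddly construction, and it is the only place where I use that $S$ is a surface rather than just a space admitting charts to $\mathbb{R}^2$.
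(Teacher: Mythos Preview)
Your approach is the same as the paper's: the paper simply says ``by using a smooth chart, we can extend Lemma~\ref{c1throughtangents} from $\mathbb{R}^2$ to any surface $S$,'' with no further detail. You are reproducing that reduction, but with more care about a point the paper leaves implicit---namely, that the closed curve produced by Lemma~\ref{c1throughtangents} need not lie inside $\varphi(U)$.

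Your proposed fix (pull back only a tail of the arc $\xi$ and then close it up to a simple closed $C^1$ curve inside $S$) is valid, though one remark: the arc $\xi$ as constructed runs out to $x_0 = x_1 + 1 \geq 1$, so passing to a tail of the subsequence does not shrink $\xi$ itself; what you really mean is to restrict $\xi$ to a subinterval $[0,x_K]$ small enough to lie in $\varphi(U)$, which still contains all $p_{n_k}$ with $k \geq K$. With that correction the argument goes through.

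There is a shorter way to handle the same issue that avoids the closing-up step entirely: after shrinking $U$, compose $\varphi$ with a smooth diffeomorphism from $\varphi(U)$ (say an open ball) onto all of $\mathbb{R}^2$. The resulting chart is still smooth and based at $p$, the pushed-forward sequence still converges along the corresponding line with the correct tangent data, and now the entire simple closed curve $\tilde{\gamma}$ from Lemma~\ref{c1throughtangents} lies in the chart image and pulls back directly to $S$. This sidesteps the ``fiddly'' $C^1$ closure in $S$ that you flag as the main obstacle.
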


\medskip

\noindent \textit{Tangent lines and their angles.  } In $\mathbb{R}^2$, any element of $\mathbb{P}T_{(x, y)}\mathbb{R}^2$ can be naturally identified with the non-obtuse angle between a straight line representative and the horizontal line through $(x, y)$.  We let $\theta_{\ell} \in S^1$ denote the angle corresponding to the line $\ell \in \mathbb{P}T_{(x, y)}\mathbb{R}^2$, with the natural identification $\theta \sim \theta + \pi n$ for $n \in \mathbb{N}$.

Our next lemma gives a characterization  for points converging along a line that utilizes $C^1$ curves.

\begin{lemma}\label{convergealongline}
Let $\{(x_n, y_n)\}$ be a sequence converging to the point $(x, y) \in \mathbb{R}^2$.  Let $\ell \in \mathbb{P}T_{(x, y)}\mathbb{R}^2$.  Then $\{(x_n, y_n)\}$ converges along $\ell$ to $(x,y)$ if and only if for any $\varepsilon >0$  and any $C^1$ curves $\gamma^+$ and $\gamma^-$ whose tangent lines at $(x, y)$ have angles in $(\theta_{\ell}, \theta_{\ell}+ \varepsilon)$ and $(\theta_{\ell}- \varepsilon, \theta_{\ell})$,   there exists a neighborhood $U$ of $(x, y)$ and $N \in \mathbb{N}$ such that for all $n >N$,  $(x_n, y_n)$ is in the same component in $U \setminus (\gamma^+ \cup \gamma^-)$ as the straight line representative of $\ell$ at $(x,y)$.  
\end{lemma}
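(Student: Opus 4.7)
My plan is to reduce, by an affine change of coordinates, to the case $(x,y) = (0,0)$ and $\ell = \{y = 0\}$, so that $\theta_\ell = 0$ and the statement ``$\{(x_n, y_n)\}$ converges along $\ell$ to the origin'' becomes equivalent to the polar angle $\theta_n \in \mathbb{R}/\pi\mathbb{Z}$ of the nonzero terms $(x_n, y_n)$ converging to $0 \pmod \pi$. The central geometric fact I will use is that any $C^1$ curve $\gamma$ through the origin with tangent angle $\theta$ is angularly concentrated at $\theta$: for every $\delta > 0$ there exists a neighborhood $V$ of the origin such that every $q \in (\gamma \cap V) \setminus \{0\}$ has $\arg(q) \in (\theta - \delta, \theta + \delta) \pmod \pi$. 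This follows directly from a $C^1$ parametrization $\gamma(t)$ with $\gamma(0) = 0$ and $\gamma'(0) \ne 0$, since $\gamma(t)/|\gamma(t)|$ tends to $\pm\, \gamma'(0)/|\gamma'(0)|$ as $t \to 0$.

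From this concentration property, for given $\gamma^\pm$ with tangent angles $\theta^+ \in (0, \varepsilon)$ and $\theta^- \in (-\varepsilon, 0)$, I can shrink $U$ so that $\gamma^\pm \cap U$ meet only at the origin and so that the angular positions of points of $\gamma^\pm \cap U$ are confined to bands $(\theta^\pm - \delta^\pm, \theta^\pm + \delta^\pm) \pmod \pi$, where each $\delta^\pm$ is chosen strictly smaller than $|\theta^\pm|$. Then $U \setminus (\gamma^+ \cup \gamma^-)$ splits into four components; the two that meet $\ell \setminus \{0\}$ are characterized by containing the angular gaps around $0$ and around $\pi$, and a point in $U$ lies in one of these two components if and only if its polar angle avoids both $\delta^\pm$-bands, since then the radial segment from the origin to it crosses neither $\gamma^+$ nor $\gamma^-$.

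For the forward direction, assume $\theta_n \to 0 \pmod \pi$, and given $\gamma^\pm$ take the corresponding $\delta^\pm$ and $U$. For $n$ large, $\theta_n$ falls inside the angular gap around $0$ (or around $\pi$, depending on sign), so $(x_n, y_n)$ lies in an $\ell$-containing component as required. For the reverse direction, I argue by contradiction: suppose $\theta_n$ does not converge to $0 \pmod \pi$, so there exist $\varepsilon_0 > 0$ and a subsequence with $\theta_{n_k}$ at distance at least $\varepsilon_0$ from $0 \pmod \pi$ for all $k$. Apply the hypothesis with $\varepsilon = \varepsilon_0/2$ and with $\gamma^\pm$ the straight lines through the origin at angles $\pm\varepsilon_0/4$; these lines have all of their nonzero points at exactly these angles, so the two $\ell$-containing components of $U \setminus (\gamma^+ \cup \gamma^-)$ consist precisely of points with polar angle in $(-\varepsilon_0/4, \varepsilon_0/4) \pmod \pi$. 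Since $\theta_{n_k}$ stays at least $\varepsilon_0 > \varepsilon_0/4$ away from $0 \pmod \pi$, no $(x_{n_k}, y_{n_k})$ can lie in an $\ell$-containing component, contradicting the hypothesis applied to infinitely many $n_k > N$.

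The main obstacle is the component identification in generality: when $\varepsilon$ is large, the curves $\gamma^\pm$ need not be graphs over the $x$-axis, so a direct comparison of coordinates via inequalities on functions $y = f^\pm(x)$ breaks down. The polar framing together with the angular concentration lemma circumvents this cleanly, because it reduces the geometric question ``which component of $U \setminus (\gamma^+ \cup \gamma^-)$ does $(x_n, y_n)$ lie in?'' to the arithmetic question ``does $\theta_n$ lie in the angular gap around $0 \pmod \pi$?'', from which both implications of the lemma follow by standard convergence arguments.
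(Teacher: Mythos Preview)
Your approach is essentially the same as the paper's: reduce to the origin with $\ell$ horizontal, use the angular concentration of $C^1$ curves near their tangent direction to confine $\gamma^\pm$ to angular bands missing $0 \pmod \pi$, deduce the forward direction from $\theta_n \to 0$, and handle the reverse direction by taking $\gamma^\pm$ to be straight lines. One small overstatement to clean up: the ``if and only if'' characterization of the $\ell$-containing components via angular gaps is only an ``if'' for general $\gamma^\pm$ (a point whose angle lands in a $\delta^\pm$-band may still lie in an $\ell$-component), but since you only use the ``if'' direction in the forward argument and use exact straight lines in the reverse argument, the proof goes through.
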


\begin{figure}[h]
\centering
\includegraphics[width=119mm]{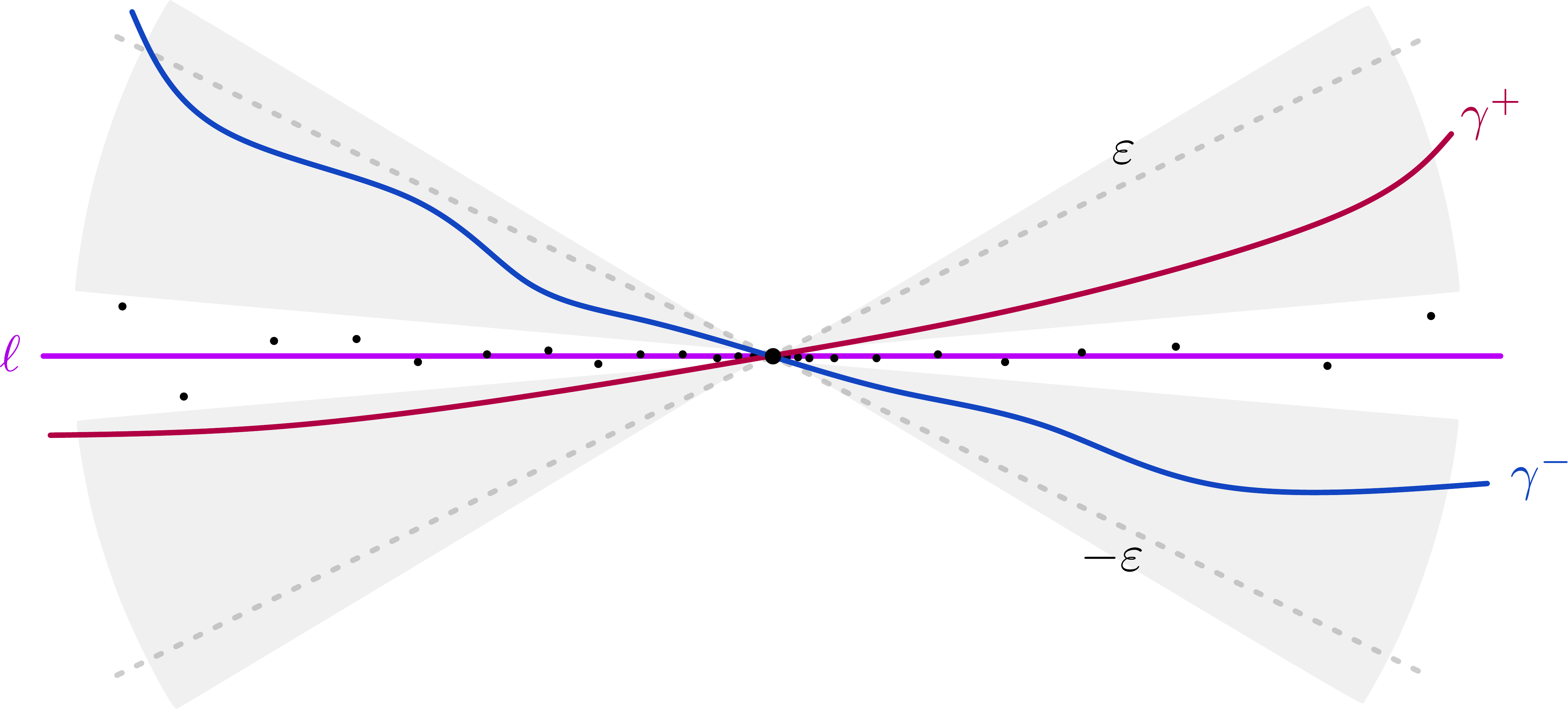} 
\caption{The points $(x_n, y_n)$ converging along the line $\ell$ eventually stay between the regions defined by $(-\varepsilon - \delta/2, - \delta / 2) \cup (\delta / 2, \varepsilon + \delta / 2)$ and thus also the curves $\gamma^+$ and $\gamma^-$}
\end{figure}

\begin{proof}
We first prove the forward direction.  After composing by an isometry of $\mathbb{R}^2$, we can assume that $(x,y)$ is the origin and the $x$-axis is the straight line representative of $\ell$. Let $\varepsilon >0$.  Pick $\gamma^+$ and $\gamma^-$ to be any $C^1$ curves with tangent lines at the origin with angles $\theta^+ \in (0,  \varepsilon)$ and  $\theta^- \in (- \varepsilon, 0)$.

Set $\delta = \min\{|\theta^+|, |\theta^-|\}$.  Note that $\theta^+ \in [\delta, \varepsilon)$ and $\theta^- \in (-\varepsilon, -\delta]$. Since the angles of the tangent lines of $\gamma^+$ and $\gamma^-$ are continuous,  then there is a neighborhood $U$ of the origin such that the curves $\gamma^+$ and $\gamma^-$ stay within the region defined by the angles $(- \varepsilon -\delta/2, -\delta/2) \cup (\delta/2, \varepsilon + \delta/2)$.  

Let $\theta_n$ denote the angle of the point $(x_n, y_n)$ relative to the origin.  Since the $(x_n, y_n)$ converge along the $x$-axis, then $\theta_n$ converge to 0.  So there exists  $N_1 \in \mathbb{N}$ such that for all $n >N_1$, $\theta_n \in (-\delta/2, \delta/2)$.  Moreover, since $(x_n, y_n)$ converge to the origin, then there exists $N_2 \in \mathbb{N}$ such that for all $n >N_2$, the points $(x_n, y_n)$ are contained in $U$.  By setting $N = \max\{N_1, N_2\}$, we have found the desired $U$ and $N$.

We now prove the reverse direction.  Let $\gamma^+$ and $\gamma^-$ be the straight lines with angles $\varepsilon$ and $-\varepsilon$. Since $\varepsilon$ is arbitrary, then the angle of the points $(x_n, y_n)$ relative to the origin must be converging to 0.  Thus the sequence $\{(x_n, y_n)\}$ is converging along the $x$-axis to the origin.  
\end{proof}

Note that Lemma~\ref{convergealongline} also applies locally to sequences that converge along lines to points in a surface $S$ by replacing straight line representatives with locally geodesic representatives.  We can now prove Lemma~\ref{preservetransverse} and Propostion~\ref{propthm2forward}, giving the forward direction of our Main Theorem. 

\begin{proof}[Proof of Lemma~\ref{preservetransverse}]
Let $f \in \thomeo(S)$.  We proceed by contradiction.  Assume that we have a transverse sequence $\{(p_n, \ell_n)\}$ that converges to $p$ along $\ell$ such that $\{(f(p_n), \bar{d}f_p(\ell_n))\}$ is not a transverse sequence.  

Since $f$ is a homeomorphism, then $\{f(p_n)\}$ must converge to $f(p)$.  Next, let $\varepsilon >0$ and consider a pair of curves $\gamma^+$ and $\gamma^-$ from Lemma~\ref{convergealongline} through the point $f(p)$ such that the angles $\theta^+$ and $\theta^-$ are in the interval $(\theta_{\bar{d}f_p(\ell)} - \varepsilon,  \theta_{\bar{d}f_p(\ell)} + \varepsilon)$.   By Lemma~\ref{projtangentspaceishomeo}, the curves $f^{-1}(\gamma^+)$ and $f^{-1}(\gamma^-)$ have tangent lines at $p$ with angles in the intervals $(\theta_{\ell},  \theta_{\ell} + \eta)$ and $(\theta_{\ell} -\eta, \theta_{\ell})$ respectively, for some $\eta >0$.   Since $\{p_n\}$ converges along $\ell$ to $p$,  there is a neighborhood of $U$ of $p$ and $N \in \mathbb{N}$ such that for all $n >N$, the points $p_n$ are in the same component of $U \setminus (f^{-1}(\gamma^+) \cup f^{-1}(\gamma^-))$ as a locally geodesic representative of $\ell$.  But then for all $n >N$,  the points $f(p_n)$ are in the same component of $f(U) \setminus (\gamma^+ \cup \gamma^-)$ as a locally geodesic representative of $\bar{d}f_p(\ell)$.  Thus by Lemma~\ref{convergealongline},  the sequence $\{f(p_n)\}$ converges along $\bar{d}f_p(\ell)$ to $f(p)$. 

For $\{(f(p_n), \bar{d}f_p(\ell_n))\}$ to not be a transverse sequence,  it must have a subsequence that converges to $(f(p), \bar{d}f_p(\ell))$.  By Corollary~\ref{c1curvewithtangentsonS}, there is a $C^1$ curve $\alpha$ through a further subsequence $\{(f(p_{n_{k}}), \bar{d}f_p(\ell_{n_{k}}))\}$ such that the tangent line of $\alpha$ at each $f(p_{n_{k}})$ is $\bar{d}f_p(\ell_{n_{k}})$.  Thus $f^{-1}(\alpha)$ must be a curve through the points $p_{n_{k}}$ with tangent lines $\ell_{n_{k}}$.  Since this is a subsequence of the transverse sequence $\{(p_n, \ell_n)\}$,  these tangent lines cannot converge.  So $f^{-1}(\alpha)$ cannot be a $C^1$ curve.  But this contradicts the assumption that $f \in \thomeo(S)$.  
\end{proof}

\begin{proof}[Proof of Proposition~\ref{propthm2forward}]
Property $(a)$ comes directly from the definition of $\thomeo(S)$.  The other two desired properties are given by Lemmas~\ref{projtangentspaceishomeo} and \ref{preservetransverse}, respectively.
\end{proof}

\section{Recovering $\bthomeo(\mathbf{S})$}\label{sectionreverse}

The goal of this section is to show that the properties of elements of $\homeo^1(S)$ are sufficient to guarantee that the inverse map must also send $C^1$ curves to $C^1$ curves. More precisely, we prove the following: 

\begin{prop}\label{propthm2reverse}
For a smooth surface $S$, if $f \in \homeo(S)$ has the following three properties:
\begin{enumerate}[noitemsep,topsep=0pt, label=$(\alph*)$]
\item $f$ maps every $C^1$ curve to a $C^1$ curve, 
\item $\bar{d}f_p$ is a homeomorphism for all $p \in S$, and
\item $f$ maps every transverse sequence to a transverse sequence, 
\end{enumerate}
then $f^{-1}$ maps every $C^1$ curve to a $C^1$ curve. 
\end{prop}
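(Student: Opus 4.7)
The plan is to verify, for every $C^1$ curve $\alpha \subset S$, that $\beta := f^{-1}(\alpha)$ is a $C^1$ submanifold by establishing the equivalent characterization that a simple curve is $C^1$ iff it admits a well-defined tangent line at every point and those tangent lines vary continuously. For each $q \in \beta$, write $p = f(q)$, $\ell = T_p\alpha$, and define the candidate tangent $\ell'_q := \bar{d}f_q^{-1}(\ell) \in \mathbb{P}T_qS$; this is well-defined by property $(b)$. I will show in Step~1 that $\beta$ has tangent line $\ell'_q$ at every $q$, and in Step~2 that $q \mapsto \ell'_q$ is continuous along $\beta$.

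For Step~1, suppose by contradiction that $\beta$ does not have tangent $\ell'_q$ at $q$. By compactness of $\mathbb{P}T_qS$, I pass to a subsequence $q_n \to q$ on $\beta$ whose secants from $q$ converge to some $m \neq \ell'_q$, so $q_n$ converges along $m$ to $q$. Since $\alpha$ is $C^1$, the images $f(q_n)$ converge along $\ell$ uniquely to $p$ and $\lambda_n := T_{f(q_n)}\alpha \to \ell$. Set $\tilde{k}_n := \bar{d}f_{q_n}^{-1}(\lambda_n)$. The sequence $(q_n, \tilde{k}_n)$ cannot be transverse: the only admissible transverse pair is $(q, m)$ (since $q_n \to q$ along the unique line $m$), and then property $(c)$ would force its $\bar{d}f$-image $(f(q_n), \lambda_n)$ to be transverse for $(p, \ell)$ — impossible because it converges fully to $(p, \ell)$ and $f(q_n)$ converges along $\ell$ uniquely. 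Being non-transverse, the definition forces a subsequence $(q_{n_j}, \tilde{k}_{n_j}) \to (q, m)$, so $\tilde{k}_{n_j} \to m$ in $\mathbb{P}TS$.

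Now apply Corollary~\ref{c1curvewithtangentsonS} to $\{(q_{n_j}, \tilde{k}_{n_j})\}$: after passing to a further subsequence there is a simple closed $C^1$ curve $\gamma$ through each $q_{n_j}$ with tangent $\tilde{k}_{n_j}$. By closedness of $\gamma$ and continuity of its tangent lines, $q \in \gamma$ with $T_q\gamma = m$. By property $(a)$, $f(\gamma)$ is $C^1$, and by Lemma~\ref{autfinpreservetangent} the tangent of $f(\gamma)$ at each $f(q_{n_j})$ equals $\bar{d}f_{q_{n_j}}(\tilde{k}_{n_j}) = \lambda_{n_j} = T_{f(q_{n_j})}\alpha$. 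Thus $\alpha$ and $f(\gamma)$ are two $C^1$ curves intersecting at the infinitely many points $f(q_{n_j})$ converging to $p$, so Corollary~\ref{c1infinitesharetangentline} forces them to share the tangent at $p$, yielding $\bar{d}f_q(m) = \ell = \bar{d}f_q(\ell'_q)$. Bijectivity of $\bar{d}f_q$ from property $(b)$ then gives $m = \ell'_q$, contradicting $m \neq \ell'_q$.

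For Step~2, suppose $q_n \to q$ on $\beta$ with $\ell'_{q_n} \not\to \ell'_q$; after a subsequence, $\ell'_{q_n} \to m' \neq \ell'_q$. Since $\beta$ has tangent $\ell'_q$ at $q$ by Step~1, $q_n$ converges along $\ell'_q$ to $q$, so $(q_n, \ell'_{q_n})$ is a transverse sequence for $(q, \ell'_q)$. But its $\bar{d}f$-image $(f(q_n), \lambda_n)$ converges fully to $(p, \ell)$ with $f(q_n)$ converging along $\ell$ uniquely, so it cannot be transverse, contradicting property $(c)$. The main obstacle will be Step~1, where one must recognize that non-transversality of $(q_n, \tilde{k}_n)$ forces $\tilde{k}_{n_j}$ to converge to the wrong limit $m$ rather than $\ell'_q$, so that Corollary~\ref{c1curvewithtangentsonS} manufactures a $C^1$ curve $\gamma$ whose image configuration forces $\bar{d}f_q(m) = \ell$ via infinite-intersection tangency.
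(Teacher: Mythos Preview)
Your proof is correct, and Step~2 is essentially the paper's Lemma~\ref{inversetangentcontinuous} verbatim. The genuine difference is in Step~1. The paper's Lemma~\ref{inversetangentline} establishes existence of the tangent line using only properties $(a)$ and $(b)$: it extracts \emph{two} distinct subsequential secant-limits $m\neq m'$, uses the much weaker Observation~\ref{c1connectthedots} (no tangent constraints at all) to thread $C^1$ curves $\alpha,\beta$ through each subsequence, and then Corollary~\ref{c1infinitesharetangentline} forces $f(\alpha),f(\beta)$ both to share the tangent of $\gamma$ at $f(p)$, contradicting injectivity of $\bar{d}f_p$. You instead invoke property~$(c)$ already in Step~1: you argue that $(q_n,\tilde k_n)$ cannot be transverse because its $\bar{d}f$-image converges fully to $(p,\ell)$, deduce $\tilde k_{n_j}\to m$, and then call on the stronger Corollary~\ref{c1curvewithtangentsonS} to build a curve with prescribed tangents. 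Both routes end at the same contradiction $\bar{d}f_q(m)=\ell=\bar{d}f_q(\ell'_q)$, but the paper's version is more economical and cleanly isolates the roles of the hypotheses: tangent \emph{existence} needs only $(a)$ and $(b)$, while $(c)$ is required solely for \emph{continuity}. Your version has the compensating virtue of being more uniform, running the transverse-sequence machinery through both steps. One minor omission: you assert the ``equivalent characterization'' of $C^1$ at the outset without justification; the paper closes this gap by citing Burgu\'es--Cuf\'i.
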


We prove this proposition in two steps. We first prove Lemma~\ref{inversetangentline}, showing that every point of the preimage curve has a well-defined tangent line.  We then prove Lemma~\ref{inversetangentcontinuous},  which shows that these tangent lines must vary continuously.

\subsection{Well-defined tangent lines}

We first make a note on what it means for a curve to have a well-defined tangent line at a point.  The tangent line at a point $\gamma(t) = (\gamma_1(t), \gamma_2(t))$ on the parameterized curve $\gamma$ in $\mathbb{R}^2$ is uniquely determined by the slope of the line.  Thus we have a well-defined tangent line if the slopes of the secant lines approaching $\gamma(t)$ have a unique limit.  In other words, there is a well-defined tangent line at $\gamma(t)$ if 
$$\lim_{h \rightarrow 0} \frac{\gamma_2(t+h) - \gamma_2(t)}{\gamma_1(t+h) - \gamma_1(t)}$$ exists.  In our case, we also allow vertical tangent lines if 
$$\liminf_{h \rightarrow 0} \left|\frac{\gamma_2(t+h) - \gamma_2(t)}{\gamma_1(t+h) - \gamma_1(t)} \right| = \infty$$ This notion of a well-defined tangent line can also be translated locally to any surface using a smooth chart around the point $\gamma(t)$. 

With this definition in mind,  the goal of this section is to prove the following: 

\begin{lemma}\label{inversetangentline}
If $f \in \homeo(S)$ preserves tangencies and $\bar{d}f_p$ is a homeomorphism for all $p \in S$, then for any simple closed $C^1$ curve $\gamma$,  the curve $f^{-1}(\gamma)$ has a well-defined tangent line at every point.
\end{lemma}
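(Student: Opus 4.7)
Fix a simple closed $C^1$ curve $\gamma$ in $S$ and a point $p \in f^{-1}(\gamma)$, and set $q = f(p)$. Let $\ell_q \in \mathbb{P}T_qS$ be the tangent line of $\gamma$ at $q$, which exists by hypothesis on $\gamma$. The plan is to produce the candidate tangent line explicitly as $\ell_p := \bar{d}f_p^{-1}(\ell_q) \in \mathbb{P}T_pS$, using the hypothesis that $\bar{d}f_p$ is a homeomorphism. It then suffices to show that every sequence $\{p_n\} \subset f^{-1}(\gamma) \setminus \{p\}$ with $p_n \to p$ converges along $\ell_p$ to $p$; this is equivalent to the paper's slope/liminf definition of a well-defined tangent line at $p$, and covers the ``vertical tangent'' case automatically.

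\textbf{Main argument via Lemma~\ref{convergealongline}.} Given such a sequence $\{p_n\}$, set $q_n := f(p_n) \in \gamma$. Since $\gamma$ is $C^1$ at $q$ with tangent $\ell_q$, the sequence $\{q_n\}$ converges along $\ell_q$ to $q$. To verify the corresponding statement for $\{p_n\}$ via Lemma~\ref{convergealongline}, fix $\varepsilon > 0$ and any $C^1$ curves $\gamma^+, \gamma^-$ through $p$ whose tangent angles at $p$ lie in $(\theta_{\ell_p}, \theta_{\ell_p} + \varepsilon)$ and $(\theta_{\ell_p} - \varepsilon, \theta_{\ell_p})$. Set $\delta^\pm := f(\gamma^\pm)$; these are $C^1$ curves through $q$ by property $(a)$, and by property $(b)$ their tangent angles at $q$ equal $\bar{d}f_p$ of those of $\gamma^\pm$. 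Because $\bar{d}f_p \colon \mathbb{P}T_pS \to \mathbb{P}T_qS$ is a homeomorphism of circles taking $\ell_p$ to $\ell_q$, for $\varepsilon$ small enough the tangent angles of $\delta^+, \delta^-$ fall (after possibly relabeling) in $(\theta_{\ell_q}, \theta_{\ell_q} + \varepsilon')$ and $(\theta_{\ell_q} - \varepsilon', \theta_{\ell_q})$ for some $\varepsilon' > 0$. The forward direction of Lemma~\ref{convergealongline} applied to $\{q_n\}$ with $\delta^\pm$ then yields a neighborhood $V$ of $q$ and an index $N$ such that for $n > N$, the point $q_n$ lies in the component of $V \setminus (\delta^+ \cup \delta^-)$ containing a locally geodesic representative of $\ell_q$. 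Pulling back by the homeomorphism $f^{-1}$, $p_n$ lies in the corresponding component $R$ of $U := f^{-1}(V)$ with $\gamma^+ \cup \gamma^-$ removed.

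\textbf{Identifying $R$: the main obstacle.} The delicate remaining step, which I expect to be the main obstacle, is to show $R$ is the component of $U \setminus (\gamma^+ \cup \gamma^-)$ containing a locally geodesic representative of $\ell_p$, so that the reverse direction of Lemma~\ref{convergealongline} concludes $p_n \to p$ along $\ell_p$. The trouble is that $f^{-1}$ need not send a geodesic representative of $\ell_q$ to one of $\ell_p$, so $R$ is not identified with the $\ell_p$-line-rep component by direct pullback. To circumvent this, I would introduce an auxiliary $C^1$ curve $\eta$ through $p$ with tangent line $\ell_p$ at $p$ (for instance, a short geodesic arc in direction $\ell_p$). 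Then $f(\eta)$ is $C^1$ through $q$ by property $(a)$, with tangent line $\bar{d}f_p(\ell_p) = \ell_q$ at $q$ by property $(b)$. A second application of the forward direction of Lemma~\ref{convergealongline} to any sequence on $f(\eta)$ approaching $q$ places $f(\eta) \setminus \{q\}$ near $q$ in the $\ell_q$-line-rep component of $V \setminus (\delta^+ \cup \delta^-)$; pulling back, $\eta \setminus \{p\}$ near $p$ lies in $R$. On the other hand, because $\gamma^+, \gamma^-$ were chosen with tangent angles strictly on either side of $\theta_{\ell_p}$, the branches of $\eta$ near $p$ lie in the $\ell_p$-line-rep component of $U \setminus (\gamma^+ \cup \gamma^-)$. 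Since two components sharing a point coincide, $R$ is precisely the $\ell_p$-line-rep component, and the proof is complete.
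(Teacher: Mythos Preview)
Your argument is correct and takes a genuinely different route from the paper's. The paper proceeds by contradiction: assuming the secant slopes of $f^{-1}(\gamma)$ at $p$ have two distinct subsequential limits $m \neq m'$, it invokes Observation~\ref{c1connectthedots} to thread simple closed $C^1$ curves $\alpha,\beta$ through the two subsequences of points, so that $\alpha$ and $\beta$ have tangent slopes $m$ and $m'$ at $p$. Since $f(\alpha),f(\beta)$ are $C^1$ and meet $\gamma$ in infinitely many points accumulating at $f(p)$, Corollary~\ref{c1infinitesharetangentline} forces both to share the tangent line of $\gamma$ there; injectivity of $\bar{d}f_p$ then yields the contradiction. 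This is short, uses only the bijectivity (not the continuity) of $\bar{d}f_p$, and stays entirely within the ``curves through accumulating intersection points'' toolkit of Section~\ref{sectionforward}.

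Your approach instead names the tangent line in advance as $\bar{d}f_p^{-1}(\ell_q)$ and verifies the secant-limit condition via the topological wedge criterion of Lemma~\ref{convergealongline}: push the bounding curves $\gamma^\pm$ forward to $\delta^\pm$, trap $q_n$ in the $\ell_q$-wedges, pull back, and use an auxiliary curve $\eta$ with tangent $\ell_p$ to certify that the pulled-back wedges are exactly the $\ell_p$-wedges. This is more constructive and exercises the full homeomorphism hypothesis on $\bar{d}f_p$ (you need it to place the tangents of $\delta^\pm$ on opposite sides of $\ell_q$), at the cost of importing Lemma~\ref{convergealongline} from the transverse-sequence machinery and doing the component bookkeeping. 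One small wording point: the qualifier ``for $\varepsilon$ small enough'' is unnecessary and slightly misleading---the reverse implication of Lemma~\ref{convergealongline} requires the wedge condition for \emph{all} $\varepsilon>0$ and all admissible $\gamma^\pm$, and indeed your argument delivers this, since any such $\gamma^\pm$ have tangents strictly on opposite sides of $\ell_p$ and hence $\delta^\pm$ land strictly on opposite sides of $\ell_q$ regardless of $\varepsilon$.
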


\noindent Before we prove this lemma,  we note the following observation,  which is a weaker version of Corollary~\ref{c1curvewithtangentsonS}.

\begin{obs}\label{c1connectthedots}
Let $\{x_n\}_{n=1}^{\infty}$ be a sequence of points in $S$ that converge to a point $x$. Then there is a simple closed $C^1$ curve in $S$ that intersects infinitely many of the $x_n$.
\end{obs}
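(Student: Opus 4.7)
The plan is to reduce to a situation where Corollary~\ref{c1curvewithtangentsonS} applies. If infinitely many of the $x_n$ equal $x$, then any simple closed $C^1$ curve through $x$ (for instance, a small geodesic circle) does the job, so assume without loss of generality that $x_n\neq x$ for all $n$.

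Working in a smooth chart $\varphi:U\to\mathbb{R}^2$ based at $x$, with $\varphi(x)$ the origin, I would first pass to a subsequence along which the directions from the origin to $\varphi(x_n)$ converge in $\mathbb{P}T_0\mathbb{R}^2\cong S^1$. Concretely, since $\varphi(x_n)\to 0$ with $\varphi(x_n)\neq 0$, the points $\varphi(x_n)/\|\varphi(x_n)\|$ lie on a compact circle, so a subsequence converges to some unit vector; let $\ell\in\mathbb{P}T_0\mathbb{R}^2$ be the line spanned by this vector. By standard computation this precisely says the subsequence converges along $\ell$ to the origin in the sense of Section~\ref{intromainthm}. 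Transporting back by $\varphi^{-1}$, the corresponding subsequence of $\{x_n\}$ converges along $\varphi^{-1}_*\ell\in\mathbb{P}T_xS$ to $x$.

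Now I would assign tangent data $\ell_n:=\ell$ for every $n$ in the remaining subsequence; then $(\varphi(x_n),\ell_n)$ trivially converges to $(0,\ell)$ in $\mathbb{P}T\mathbb{R}^2$. Pulling back by $\varphi$ (or equivalently working directly on $S$), the sequence $(x_n,\varphi^{-1}_*\ell)$ converges to $(x,\varphi^{-1}_*\ell)$ in $\mathbb{P}TS$. Corollary~\ref{c1curvewithtangentsonS} then produces a further subsequence $\{x_{n_k}\}$ and a simple closed $C^1$ curve $\gamma\subset S$ that passes through every $x_{n_k}$ (with prescribed tangent lines, which we do not need here).

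There is no serious obstacle: the content is entirely in Corollary~\ref{c1curvewithtangentsonS}, and the only preparatory work is the compactness argument on $S^1$ that lets us extract a subsequence converging along a common line. The one bookkeeping point to be careful about is that the observation only asserts infinitely many $x_n$ are hit, so passing to a subsequence twice (once for directions, once inside the proof of Corollary~\ref{c1curvewithtangentsonS}) is harmless.
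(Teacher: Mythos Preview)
Your proposal is correct and matches the paper's intent exactly: the paper gives no proof of this observation, merely flagging it as ``a weaker version of Corollary~\ref{c1curvewithtangentsonS},'' and your argument is precisely the reduction to that corollary via a compactness-of-$S^1$ extraction of a converging direction followed by assigning constant tangent data.
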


\noindent We can now prove the main result of this section.

\begin{proof}[Proof of Lemma~\ref{inversetangentline}]
Let $\gamma$ be a parameterized $C^1$ curve in $S$ such that $\gamma(0) = f(p)$.  We prove this result by contradiction.  Assume that $f^{-1}(\gamma(0))$ does not have a well-defined tangent line.  Thus after applying a local chart from a neighborhood of $f^{-1}(\gamma(0))$  to $\mathbb{R}^2$,  the limit of the secant slopes does not exist.  Denote the coordinates of $f^{-1}(\gamma(t))$ in the local chart by $(\gamma^{-1}_1(t), \gamma^{-1}_2(t))$.   By considering $\mathbb{R}^2 \cup \{\infty\} \cong S^1$ and using the compactness of  $S^1$,  there must be a slope $m \in \mathbb{R}^2 \cup \{\infty\}$ such that a subsequence of the secant slopes converges to $m$.  More precisely, there exist  $t_{n} \in \left(-\frac 1 n, \frac 1 n \right)$ with
$$\lim_{n \rightarrow \infty} \frac{\gamma^{-1}_2(t_{n}) - \gamma^{-1}_2(0)}{\gamma^{-1}_1(t_{n}) - \gamma^{-1}_1(0)} = m$$
But the limit for all the secant slopes approaching $0$ does not exist, so there is some open neighborhood $N_m$ of $m$ such that there exist $s_n \in \left(-\frac 1 n, \frac 1 n \right)$ with 
$$\qquad \qquad \frac{\gamma^{-1}_2(s_{n}) - \gamma^{-1}_2(0)}{\gamma^{-1}_1(s_{n}) - \gamma^{-1}_1(0)} \notin N_m$$
Thus this sequence of secant slopes is a subset of $S^1 \setminus N_m$, which is also compact.  So there is a subsequence $\{s_{n_k}\}$ such that 
$$\lim_{k\rightarrow \infty}\frac{\gamma^{-1}_2(s_{n_k}) - \gamma^{-1}_2(0)}{\gamma^{-1}_1(s_{n_k}) - \gamma^{-1}_1(0)} = m'$$
For some $m' \neq m$.  By applying a rotation if necessary, we can assume that neither $m$ nor $m'$ are infinity.  

By Observation~\ref{c1connectthedots},  there exist simple closed $C^1$ curves $\alpha$ and $\beta$ such that $\alpha$ intersects infinitely many of the $f^{-1}(\gamma(t_n))$ and $\beta$ intersects infinitely many of the $f^{-1}(\gamma(s_{n_k}))$.  Note that the tangent line of $\alpha$ at $p$ must have a slope of $m$, while the tangent line of $\beta$ at $p$ must have a slope of $m'$.  Since $f(\alpha)$ and $\gamma$ are $C^1$ curves that intersect at infinitely many distinct points that converge to $f(p)$,  then by Corollary~\ref{c1infinitesharetangentline},  $f(\alpha)$ and $\gamma$ must have the same tangent line at $f(p)$. Similarly, $f(\beta)$ and $\gamma$ are $C^1$ curves with the same tangent line at $f(p)$.  Thus $f(\alpha)$ and $f(\beta)$ must have the same tangent line at $f(p)$.  But since $\bar{d}f_p$ is a homeomorphism, this means that $\alpha$ and $\beta$ must have the same tangent line at $p$. This is a contradiction since by construction $m \neq m'$.  
\end{proof}

\subsection{Continuity of tangent lines}

Every $C^1$ curve has tangent lines at every point that vary continuously.  Work of Burgu\'es--Cuf\'i \cite[Theorem 10]{BC} shows that for simple closed curves,  continuously varying tangent lines is sufficient to recover a $C^1$-regular parameterization with nonzero tangent vectors.  

We define a function $M_{\gamma}: \mathbb{R} \rightarrow \mathbb{R} \cup \infty \cong S^1$ that gives the slope the tangent lines of a curve $\gamma$.  Precisely, for a parameterized curve $\gamma(t) = (\gamma_1(t), \gamma_2(t))$ in $\mathbb{R}^2$ this function is defined as
$$M_{\gamma}(t) = \lim_{h \rightarrow 0} \frac{\gamma_2(t+h) - \gamma_2(t)}{\gamma_1(t+h) - \gamma_1(t)}$$
By using a smooth chart, this function is also defined locally for curves on surfaces whenever the tangent line is well-defined.

\begin{lemma}\label{inversetangentcontinuous}
For a surface $S$, let $f \in \homeo(S)$ has all of the following properties:
\begin{enumerate}[noitemsep,topsep=0pt, label=$(\alph*)$]
\item $f$ maps every $C^1$ curve to a $C^1$ curve
\item $\bar{d}f_p$ is a homeomorphism for all $p \in S$
\item $f$ maps every transverse sequence to a transverse sequence.
\end{enumerate}
then for any $C^1$ curve $\gamma$,  $M_{f^{-1}(\gamma)}$ is continuous.
\end{lemma}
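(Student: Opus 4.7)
The plan is to prove the lemma by contradiction: assume there is a $C^1$ curve $\gamma$ such that $M_{f^{-1}(\gamma)}$ is discontinuous at some parameter, and manufacture a transverse sequence in the domain whose $f$-image fails to be a transverse sequence, violating property~$(c)$.

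First, set $\eta = f^{-1}(\gamma)$. By Lemma~\ref{inversetangentline}, $\eta$ has a well-defined tangent line at every point, so $M_{\eta}$ is defined everywhere. Suppose $M_{\eta}$ is discontinuous at $t_0$, so there is a sequence $t_n \to t_0$ with $t_n \neq t_0$ such that $M_{\eta}(t_n)$ stays bounded away from $M_{\eta}(t_0)$ in the compact projective tangent space at $\eta(t_0)$. Write $p = \eta(t_0)$, $p_n = \eta(t_n)$, let $\ell$ be the tangent line of $\eta$ at $p$, and let $\ell_n$ be the tangent line of $\eta$ at $p_n$. Using compactness of $\mathbb{P}T_pS \cong S^1$, pass to a subsequence so that $\ell_n \to \ell^*$ for some $\ell^* \neq \ell$.

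Next, I would verify that $\{(p_n, \ell_n)\}$ is a transverse sequence for $(p, \ell)$. The definition of the tangent line of $\eta$ at $p$ forces the secant slopes through $(p, p_n)$ in a smooth chart to converge to the slope of $\ell$, so $p_n$ converges to $p$ along $\ell$. Since $(p_n, \ell_n) \to (p, \ell^*) \neq (p, \ell)$, no subsequence can converge to $(p, \ell)$, which gives transversality. By hypothesis $(c)$, the image sequence $\{(f(p_n), \bar{d}f_{p_n}(\ell_n))\}$ must then also be transverse.

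Finally, I would derive a contradiction by showing the image sequence is actually not transverse. Since $\eta$ has tangent line $\ell_n$ at $p_n$, the argument of Lemma~\ref{autfinpreservetangent} (together with the very definition of $\bar{d}f_{p_n}$) shows that $\gamma = f(\eta)$ has tangent line $\bar{d}f_{p_n}(\ell_n)$ at $f(p_n)$. Because $\gamma$ is $C^1$, its tangent lines vary continuously, so letting $m$ denote the tangent line of $\gamma$ at $f(p)$, we get $\bar{d}f_{p_n}(\ell_n) \to m$ in $\mathbb{P}TS$. Moreover, $\gamma$ being $C^1$ also forces $f(p_n) = \gamma(t_n)$ to converge to $f(p)$ along $m$ and along no other line. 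Hence the only candidate $(q, \ell^{**})$ for which the image could be transverse is $(f(p), m)$, but the image sequence already converges to $(f(p), m)$, ruling that out. This contradicts property~$(c)$ and completes the proof.

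The main obstacle will be the second step: certifying that the chosen preimage sequence genuinely qualifies as a transverse sequence. The point is that having a well-defined (though a priori discontinuous) tangent line on $\eta$ at $p$ is exactly what guarantees that $p_n$ converges along $\ell$ to $p$, so Lemma~\ref{inversetangentline} does the key work here. Once that is in hand, the rest is routine translation between the tangent-line data on $\eta$ and on $\gamma$ via $\bar{d}f_{p_n}$, together with the continuity of tangent lines on the $C^1$ curve $\gamma$.
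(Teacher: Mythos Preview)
Your proposal is correct and follows essentially the same contradiction strategy as the paper: assume $M_{f^{-1}(\gamma)}$ is discontinuous, extract a transverse sequence on the preimage curve, and show its image lies on the $C^1$ curve $\gamma$ and hence converges in $\mathbb{P}TS$, contradicting property~$(c)$. The only cosmetic differences are that you pass to a further subsequence with $\ell_n \to \ell^*$ (the paper stops at ``no subsequence converges to $\ell$''), and you are more explicit than the paper in checking that $f(p_n)$ converges along the unique line $m$; one small imprecision is that the identification $\bar{d}f_{p_n}(\ell_n) = $ (tangent of $\gamma$ at $f(p_n)$) is really justified via Corollary~\ref{c1infinitesharetangentline} (build a $C^1$ curve through points of $\eta$ near $p_n$) rather than Lemma~\ref{autfinpreservetangent} directly, since $\eta$ itself is not known to be $C^1$---but the paper's ``by construction'' is no more detailed on this point.
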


\begin{proof}
Let $\gamma(t)$ be a parameterized $C^1$ curve such that $f^{-1}(\gamma(0)) = p$. 
First, note that as a consequence of Lemma~\ref{inversetangentline},  for any sequence $t_n$ in $\mathbb{R}$ that converges to 0,  the sequence $f^{-1}(\gamma(t_n))$ converges along the line with slope $M_{f^{-1}(\gamma)}(0)$ to $p$. By rotating the chart around $p$, we can assume that $M_{f^{-1}(\gamma)}(0)$ is finite.

Now suppose for a contradiction that $M_{f^{-1}(\gamma)}$ is not continuous. Then there is a sequence of $t_n$ in $\mathbb{R}$ that converges to $0$ such that $M_{f^{-1}(\gamma)}(t_n)$ does not converge to $M_{f^{-1}(\gamma)}(0)$.  Moreover, there exists an $\varepsilon >0$ and a subsequence $t_{n_k}$ such that $M_{f^{-1}(\gamma)}(t_{n_k})$ is outside of an $\varepsilon$-neighborhood of $M_{f^{-1}(\gamma)}(0)$. Thus no subsequence of $M_{f^{-1}(\gamma)}(t_{n_k})$ converges to $M_{f^{-1}(\gamma)}(0)$.  Let $p_k$ denote the points $f^{-1}(\gamma)(t_{n_k})$ and let $\ell_k$ denote the element in $\mathbb{P}T_{p_k}S$ that corresponds to $M_{f^{-1}(\gamma)}(t_{n_k})$.  So $\{(p_k,   \ell_k)\}$ is a transverse sequence.  

Note that by construction,  the slope of $\bar{d}f_{p_k}(\ell_k)$  is $M_{\gamma}(t_{n_k})$.  Thus $f$ maps $\{(p_k,   \ell_k)\}$ to a sequence of points and tangent lines from $\gamma$ that converge to $\gamma(0)$.  But $\gamma$ is a $C^1$ curve and thus $M_{\gamma}(t_{n_k})$ converges to $M_{\gamma}(0)$.  Thus $\{f(p_k), \bar{d}f_p(\ell_k)\}$ is not a transverse sequence, contradicting property $(c)$.  Thus $M_{f^{-1}(\gamma)}$ must be continuous.
\end{proof}

\begin{proof}[Proof of Proposition~\ref{propthm2reverse}]
By Lemmas~\ref{inversetangentline} and \ref{inversetangentcontinuous}, $f^{-1}$ maps a $C^1$ curve $\gamma$ to a curve that has continuous tangent line at every point.  The result of Burgu\'es--Cuf\'i \cite[Theorem 10]{BC} implies that $f^{-1}(\gamma)$ is also a $C^1$ curve. 
\end{proof}

\subsection{Finishing the proof}

Combining Propositions~\ref{propthm2forward} and \ref{propthm2reverse}, we now derive our main result.

\begin{proof}[Proof of Main Theorem]
Proposition~\ref{propthm2forward} proves the forward direction of the theorem.  For the reverse direction,  property $(a)$ gives that $f$ maps every $C^1$ curve to a $C^1$ curve.  By Proposition~\ref{propthm2reverse},  $f^{-1}$ also maps every $C^1$ curve to a $C^1$ curve. Thus $f \in \thomeo(S)$. 
\end{proof}

\bibliography{mybib3}

\end{document}